\numberwithin{theorem}{section}
\newcommand{\ud}{\mathrm{d}}
\newtheorem{remark}{Remark~}[section]
\newcommand{\TheTitle}{Stochastic gradient-free descents}
\newcommand{\TheTitleShort}{Stochastic gradient-free descents}
\newcommand{\TheAuthors}{X. Luo and X. Xu}
\headers{\TheTitleShort}{\TheAuthors}
\title{{\TheTitle}}
\author{
  Xiaopeng Luo\thanks{Department of Chemistry, Princeton University, Princeton, NJ 08544, and Department of Control and Systems Engineering, School of Management and Engineering, Nanjing University, Nanjing, 210008, China (\email{luo.permenant@gmail.com}, \email{xu.permenant@gmail.com}.)}
  \and
  Xin Xu\footnotemark[1]
}
\crefname{assumption}{Assumption}{Assumptions}
\begin{document}

\maketitle

\begin{abstract}
  In this paper we propose stochastic gradient-free methods and accelerated methods with momentum for solving stochastic optimization problems. All these methods rely on stochastic directions rather than stochastic gradients. We analyze the convergence behavior of these methods under the mean-variance framework, and also provide a theoretical analysis about the inclusion of momentum in stochastic settings which reveals that the momentum term we used adds a deviation of order $\mathcal{O}(1/k)$ but controls the variance at the order $\mathcal{O}(1/k)$ for the $k$th iteration. So it is shown that, when employing a decaying stepsize $\alpha_k=\mathcal{O}(1/k)$, the stochastic gradient-free methods can still maintain the sublinear convergence rate $\mathcal{O}(1/k)$ and the accelerated methods with momentum can achieve a convergence rate $\mathcal{O}(1/k^2)$ in probability for the strongly convex objectives with Lipschitz gradients; and all these methods converge to a solution with a zero expected gradient norm when the objective function is nonconvex, twice differentiable and bounded below.
\end{abstract}

\begin{keywords}
  stochastic optimization, gradient-free, momentum, strongly convex, nonconvex, convergence rate
\end{keywords}

\begin{AMS}
   65K05, 68T05, 90C15, 90C30
\end{AMS}

\section{Introduction}
\label{SGFD:s1}

In this paper, we develop and analyze stochastic \emph{gradient-free} descent (SGFD) methods and accelerated methods with momentum for solving the following class of stochastic optimization problems:
\begin{equation}\label{SGFD:eq:OP}
  x_*=\arg\min_{x\in\mathbb{R}^d}F(x),
\end{equation}
where the real-valued function $F$ is defined by
\begin{equation}\label{SGFD:eq:f}
  F(x):=\mathbb{E}_\xi\big[f(x,\xi)\big]=\int_\Xi f(x,\xi)\ud P(\xi),
\end{equation}
and $\{f(\cdot,\xi),\xi\in\Xi\}$ be a collection of real-valued functions with a given probability distribution $P$ over the index set $\Xi$. 

In the context of machine learning applications, $f(x,\xi)$ is often treated as the loss function of a prediction function $h$ incurred by the parameter vector $x$ with respect to the randomly selected sample $\xi$ from a sample set $\{(\mathcal{X}_i,\mathcal{Y}_i)\}_{i=1}^n$, i.e., $\ell(h(\mathcal{X}_i;x),\mathcal{Y}_i)$; accordingly, $F$ is treated as the empirical risk given a parameter vector $x$ with respect to the distribution $P$. The popular methodology for such problems is the stochastic gradient (SG) method \cite{ZinkevichM2003A_SG,ZhangT2004A_SG,BottouL2007A_TradeoffsLearning,
NemirovskiA2009A_StochasticProgramming,ShwartzS2011A_subgradientSVM}. Specifically, with an initial point $x_1$, these methods are characterized by the iteration
\begin{equation}\label{SGFD:eq:GD}
  x_{k+1}=x_k-\alpha_kg(x_k,\xi_k)
\end{equation}
where $\alpha_k>0$ is the stepsize and $g(x_k,\xi_k)$ is the stochastic gradient defined by
\begin{equation}\label{SGFD:eq:s}
\begin{aligned}
  g(x_k,\xi_k)=\left\{
  \begin{array}{c}
    \nabla f(x_k,\xi_k), \\[0.5em]
    \frac{1}{n_k}\sum_{i=1}^{n_k}\nabla f(x_k,\xi_{k,i}),
  \end{array}\right.
\end{aligned}
\end{equation}
which is an unbiased estimator of the socalled full gradient $\nabla F(x_k)$ \cite{ShapiroA2009M_StochasticProgramming,BottouL2018R_SGD}.

The SG method was originally developed by Robbins and Monro \cite{RobbinsH1951A_SG} for smooth stochastic approximation problems. It has convergence guarantees \cite{ChungKL1954A_SG,SacksJ1958A_SG,
NemirovskiA2009A_StochasticProgramming,ZinkevichM2003A_SG,BottouL2018R_SGD} and has gained extensive empirical success in large-scale convex and nonconvex stochastic optimization \cite{ZhangT2004A_SG,BottouL2007A_TradeoffsLearning,
ShwartzS2011A_subgradientSVM,DeanJ2012A_SG&DNN,LeCunY2015A_DeepLearning}. However, there are still notable difficulties with the SG method \cite{AsiH2019A_SPP}, and some of them are closely related to the gradient itself. For example, it might cause the vanishing and exploding gradient in training artificial neural networks \cite{BengioY1994A_GradientDifficult,
PascanuR2013A_DifficultyRNN}; moreover, the gradient is sometimes very difficult or even impossible to obtain.

From this point it is worth considering a simpler strategy that could avoid direct gradient evaluations. Actually, gradient-free optimization methods, which is known in the literature as the derivative-free \cite{ConnA2009M_DerivativeFree,StichS2013A_ConvexDF,
NesterovY2017A_GradientFree,GorbunovE2019A_DerivativeFree} or zero-order methods \cite{DuchiJ2015A_ZeroOrderCO,ShamirO2017A_ZeroOrderConvex} and also called bandit optimization in the machine learning literature \cite{AgarwalA2013A_Bandit,HazanE2014A_BanditOptimization,
ShamirO2017A_ZeroOrderConvex}, were among the first schemes suggested in the early days of the development of optimization theory \cite{MatyasJ1965A_RandomOptimization}; unfortunately, these methods seem to be much more difficult for theoretical investigation and the possible rate of convergence is far below the efficiency of the gradient-based schemes on an empirical level \cite{NesterovY2017A_GradientFree}. However, we see a restoration of the interest to this topic in the last years \cite{ConnA2009M_DerivativeFree,NesterovY2017A_GradientFree} and the gradient-free schemes have been generalized to solve stochastic optimization problems; e.g., see \cite{AgarwalA2013A_Bandit,HazanE2014A_BanditOptimization,
DuchiJ2015A_ZeroOrderCO,ShamirO2017A_ZeroOrderConvex,
NesterovY2017A_GradientFree}. And these theoretical analyses ensure a rate of convergence $\mathcal{O}(1/\sqrt{k})$ for convex functions in stochastic settings, e.g., see \cite{GorbunovE2019A_DerivativeFree} for recent work. But until now, there have been no reports showing a convergence rate $\mathcal{O}(1/k)$ or beyond under strong convex conditions for stochastic optimization problems. 

In this work, we first propose stochastic gradient-free methods with the expected rate of convergence $\mathcal{O}(1/k)$ in probability for strongly convex objectives with Lipschitz gradients, which retain the main advantage of gradient-based methods \cite{ChungKL1954A_SG,SacksJ1958A_SG,
NemirovskiA2009A_StochasticProgramming,BottouL2018R_SGD}. We use the mean-variance framework for our analysis, which enables us to understand the roles of deviation and variance in the error bound. In addition, we establish an analysis based on the gamma function to obtain the orders for terms in the error bound when employing a decaying stepsize $\alpha_k=\mathcal{O}(1/k)$ under a strong convex condition.

These advantages allow us to design an appropriate momentum term to control both the additional deviation and variance at the order $\mathcal{O}(1/k)$. And then, by using such a momentum term, we propose accelerated methods with the expected rate of convergence $\mathcal{O}(1/k^2)$ in probability for strongly convex objectives with Lipschitz gradients, which is similar to the socalled heavy ball method in deterministic settings \cite{PolyakB1964A_momentum,BottouL2018R_SGD}. And it is worth pointing out that, in deterministic settings, there is also an acceleration with a convergence rate $\mathcal{O}(1/k^2)$ for the gradient-free methods \cite{NesterovY2017A_GradientFree}.

Many important machine learning models may lead to nonconvex optimization problems \cite{BottouL2018R_SGD,PascanuR2013A_DifficultyRNN}. So we also provide analyses when the objective function is nonconvex, twice differentiable and bounded below. It is shown that all these methods converge to a solution with a zero expected gradient norm. 

The remainder of the paper is organized as follows. The next section introduces the assumptions of objectives and the mean-variance framework. In \cref{SGFD:s3}, we propose the stochastic gradient-free descent methods and analyzes their convergence behavior. On the basis of the conclusions obtained in \cref{SGFD:s3}, we further propose the accelerated methods with the rate of convergence $\mathcal{O}(1/k^2)$ in probability for strongly convex objectives in \cref{SGFD:s4}. And finally, we draw some conclusions in \cref{SGFD:s5}.

\section{Assumptions and analysis framework}
\label{SGFD:s2}

In this section, we shall first state several assumptions of the objectives and then describe the mean-variance framework, which is frequently used in the literature \cite{StichS2013A_ConvexDF,BottouL2018R_SGD} and has potential for analyzing a large collection of stochastic optimization methods of various forms and characteristics.

\subsection{Assumptions of objectives}

First, let us begin with a basic assumption of smoothness of the objective function. Such an assumption is essential for convergence analyses of our methods, as well as most gradient-based methods \cite{BottouL2018R_SGD}. 

\begin{assumption}[Lipschitz-continuous gradients]\label{SGFD:ass:A1}
The objective function $F:\mathbb{R}^d\to\mathbb{R}$ is continuously differentiable and its gradient function $\nabla F:\mathbb{R}^d\to \mathbb{R}^d$ is Lipschitz continuous with Lipschitz constant $0<L<\infty$, i.e.,
\begin{equation*}
  \|\nabla F(x')-\nabla F(x)\|_2\leqslant L\|x'-x\|_2
  ~~\textrm{for all}~~x',x\in\mathbb{R}^d.
\end{equation*}
\end{assumption}

\cref{SGFD:ass:A1} ensures that the gradient of the objective $F$ is bounded and does not change arbitrarily quickly with respect to the parameter vector. As an important consequence of \cref{SGFD:ass:A1} we note that
\begin{equation}\label{SGFD:eq:A1C}
  \left|F(x')-F(x)-\nabla F(x)^\mathrm{T}(x'-x)\right|\leqslant
  \frac{L}{2}\|x'-x\|_2^2~~\textrm{for all}~~x',x\in\mathbb{R}^d.
\end{equation}
This inequality comes from 
\begin{align*}
  \left|F(x')-F(x)-\nabla F(x)^\mathrm{T}(x'\!-\!x)\right|
  \leqslant&\int_0^1\left|\Big(\nabla F(x+t(x'\!-\!x))-\nabla F(x)
  \Big)^\mathrm{T}(x'\!-\!x)\right|\ud t \\
  \leqslant&\int_0^1\|\nabla F(x+t(x'\!-\!x))-\nabla F(x)\|_2
  \|x'\!-\!x\|_2\ud t \\
  \leqslant&L\|x'\!-\!x\|_2^2\int_0^1t\ud t=\frac{L}{2}\|x'\!-\!x\|_2^2.
\end{align*}
Moreover, \cref{SGFD:eq:A1C} is trivial if $F$ is twice continuously differentiable with $\|\nabla^2F(x)\|_2\leqslant L$.

Now we formalize a strong convexity assumption, which is often used to ensure a sublinear convergence for the stochastic gradient methods; and the role of strong sonvexity may be essential for such rates of convergence \cite{NemirovskiA2009A_StochasticProgramming,BottouL2018R_SGD}.

\begin{assumption}[Strong convexity]\label{SGFD:ass:ASC}
The objective function $F:\mathbb{R}^d\to\mathbb{R}$ is strongly convex in that there exists a constant $l>0$ such that
\begin{equation}\label{SGFD:eq:ASC}
  F(x')\geqslant F(x)+\nabla F(x)^\mathrm{T}(x'-x)+
  \frac{l}{2}\|x'-x\|_2^2~~\textrm{for all}~~x',x\in\mathbb{R}^d.
\end{equation}
Hence, $F$ has a unique minimizer, denoted as $x_*\in\mathbb{R}^d$ with $F_*:= F(x_*)$.
\end{assumption}

Notice that for any given $x\in\mathbb{R}^d$, the quadratic model $Q(t)=F(x)+\nabla F(x)^\mathrm{T}(t-x)+\frac{l}{2}\|t-x\|_2^2$ has the unique minimizer $t_*=x-\frac{1}{l}\nabla F(x)$ with $Q_*(t_*)=F(x)-\frac{1}{2l}\|\nabla F(x)\|_2^2$, then together with \cref{SGFD:eq:ASC}, one obtain
\begin{equation*}
  F_*\geqslant F(x)+\nabla F(x)^\mathrm{T}(x_*-x)+\frac{l}{2}\|x_*-x\|_2^2
  \geqslant F(x)-\frac{1}{2l}\|\nabla F(x)\|_2^2,
\end{equation*}
that is, for a given point $x\in\mathbb{R}^d$, the gap between the value of the objective and the minima can be bounded by the squared $\ell_2$-norm of the gradient of the objective:
\begin{equation}\label{SGFD:eq:convexity}
  2l(F(x)-F_*)\leqslant\|\nabla F(x)\|_2^2.
\end{equation}
This inequality is usually referred to as the Polyak-{\L}ojasiewicz inequality which was originally introduced by Polyak \cite{PolyakB1963A_Gradient}. It is a sufficient condition for gradient descent to achieve a linear convergence rate; and it is also a special case of the {\L}ojasiewicz inequality proposed in the same year \cite{LojasiewiczS1963A_PolyakGradient}, which gives an upper bound for the distance of a point to the nearest zero of a given real analytic function.

Under \cref{SGFD:ass:A1,SGFD:ass:ASC}, it is very easy to see that $l\leqslant L$. Furthermore, if $F$ is twice continuously differentiable, then \cref{SGFD:ass:A1,SGFD:ass:ASC} also imply that $0<l\leqslant \|\nabla^2F(x)\|_2\leqslant L<\infty$ for every $x\in\mathbb{R}^d$.

Many important machine learning models may lead to nonconvex optimization problems. Hence we formalize the following assumption so that we can also provide meaningful guarantees in nonconvex settings.

\begin{assumption}[General objectives]\label{SGFD:ass:AG}
The objective function $F:\mathbb{R}^d\to\mathbb{R}$ is twice differentiable and bounded below by a scalar $F_{\textrm{inf}}<\infty$, and particularly, the mapping $\|\nabla F(x)\|_2^2:\mathbb{R}^d\to\mathbb{R}_+$ has Lipschitz-continuous derivatives with Lipschitz constant $L_G>0$.
\end{assumption}

\subsection{Mean-variance framework}

The mean-variance framework can be fully described as a fundamental lemma for any iteration based on random steps, which is a slight generalization of Lemma 4.2 in \cite{BottouL2018R_SGD}. This lemma relies only on \cref{SGFD:ass:A1}.

\begin{lemma}\label{SGFD:lem:AL1}
Under \cref{SGFD:ass:A1}, if for every $k\in\mathbb{N}$, $\theta_k$ is any random vector independent of $x_k$ and $s(x_k,\theta_k)$ is a stochastic step depending on $\theta_k$, then the iteration 
\begin{equation*}
  x_{k+1}=x_k+s(x_k,\theta_k)
\end{equation*}
satisfy the following inequality
\begin{align*}
  \mathbb{E}_{\theta_k}[F(x_{k+1})]\!-\!F(x_k)
  \!\leqslant&\nabla F(x_k)^\mathrm{T}
  \mathbb{E}_{\theta_k}[s(x_k,\!\theta_k)]
  \!+\!\frac{L}{2}\|\mathbb{E}_{\theta_k}[s(x_k,\!\theta_k)]\|_2^2\!
  +\!\frac{L}{2}\mathbb{V}_{\theta_k}[s(x_k,\!\theta_k)],
\end{align*}
where the variance of $s(x_k,\theta_k)$ is defined as
\begin{equation}\label{SGFD:eq:V}
  \mathbb{V}_{\theta_k}[s(x_k,\theta_k)]:= \mathbb{E}_{\theta_k}[\|s(x_k,\theta_k)\|_2^2]- \|\mathbb{E}_{\theta_k}[s(x_k,\theta_k)]\|_2^2.
\end{equation}
\end{lemma}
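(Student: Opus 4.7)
The plan is to derive the inequality as a direct consequence of the quadratic upper bound \cref{SGFD:eq:A1C} that was already established from \cref{SGFD:ass:A1}, followed by taking conditional expectation with respect to $\theta_k$ and then applying the definition of variance in \cref{SGFD:eq:V}.

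First, I would apply \cref{SGFD:eq:A1C} with $x' = x_{k+1} = x_k + s(x_k,\theta_k)$ and $x = x_k$. Since \cref{SGFD:eq:A1C} gives an absolute value bound, in particular it yields the one-sided descent inequality
\begin{equation*}
  F(x_{k+1}) - F(x_k) \leqslant \nabla F(x_k)^\mathrm{T} s(x_k,\theta_k) + \frac{L}{2}\|s(x_k,\theta_k)\|_2^2.
\end{equation*}
This step is purely deterministic and holds pathwise for every realization of $\theta_k$.

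Next, I would take the expectation $\mathbb{E}_{\theta_k}[\cdot]$ of both sides. Because $x_k$ is independent of $\theta_k$ by hypothesis, the vector $\nabla F(x_k)$ can be pulled outside the expectation, so that the linear term becomes $\nabla F(x_k)^\mathrm{T}\mathbb{E}_{\theta_k}[s(x_k,\theta_k)]$. The quadratic term becomes $\frac{L}{2}\mathbb{E}_{\theta_k}[\|s(x_k,\theta_k)\|_2^2]$.

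Finally, I would invoke the variance identity from \cref{SGFD:eq:V}, namely
\begin{equation*}
  \mathbb{E}_{\theta_k}[\|s(x_k,\theta_k)\|_2^2] = \|\mathbb{E}_{\theta_k}[s(x_k,\theta_k)]\|_2^2 + \mathbb{V}_{\theta_k}[s(x_k,\theta_k)],
\end{equation*}
to split the second-moment term into a squared-mean part and a variance part. Substituting this decomposition yields exactly the claimed inequality. There is no real obstacle here; the lemma is essentially a mean-variance rewriting of the standard descent lemma, and the only subtlety to flag is the independence between $x_k$ and $\theta_k$, which legitimizes pulling $\nabla F(x_k)$ through $\mathbb{E}_{\theta_k}$ and treating $x_k$ as deterministic inside the expectation.
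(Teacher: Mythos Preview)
Your proposal is correct and follows essentially the same approach as the paper's own proof: apply the descent inequality from \cref{SGFD:eq:A1C}, take expectation with respect to $\theta_k$ using independence to pull $\nabla F(x_k)$ outside, and then rewrite the second moment via the variance identity \cref{SGFD:eq:V}.
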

\begin{proof}
By \cref{SGFD:ass:A1}, the iteration $x_{k+1}=x_k+s(x_k,\theta_k)$ satisfy
\begin{align*}
  F(x_{k+1})-F(x_k)\leqslant&\nabla F(x_k)^\mathrm{T}(x_{k+1}-x_k)
  +\frac{L}{2}\|x_{k+1}-x_k\|_2^2 \\
  \leqslant&\nabla F(x_k)^\mathrm{T}s(x_k,\theta_k)
  +\frac{L}{2}\|s(x_k,\theta_k)\|_2^2.
\end{align*}
Noting that $\theta_k$ is independent of $x_k$ and taking expectations in these inequalities with respect to the distribution of $\theta_k$, we obtain
\begin{equation*}
  \mathbb{E}_{\theta_k}[F(x_{k+1})]-F(x_k)\leqslant\nabla F(x_k)^\mathrm{T}\mathbb{E}_{\theta_k}[s(x_k,\theta_k)]
  +\frac{L}{2}\mathbb{E}_{\theta_k}[\|s(x_k,\theta_k)\|_2^2].
\end{equation*}
Recalling \cref{SGFD:eq:V}, we finally get the desired bound.
\end{proof}

Regardless of the states before $x_k$, the expected decrease in the objective function yielded by the $k$th stochastic step $s(x_k,\theta_k)$ could be bounded above by a quantity involving (i) a positive definite quadratic form in the expectation of $s(x_k,\theta_k)$, say,
\begin{equation}\label{SGFD:eq:Q}
  \nabla F(x_k)^\mathrm{T}\mathbb{E}_{\theta_k}[s(x_k,\theta_k)]
  +\frac{L}{2}\|\mathbb{E}_{\theta_k}[s(x_k,\theta_k)]\|_2^2,
\end{equation}
and (ii) the variance of $s(x_k,\theta_k)$. This lemma shows that, the bound of the expected decrease $\mathbb{E}_{\theta_k}[F(x_{k+1})]-F(x_k)$ can be obtained by analyzing the expectation and variance of the step $s(x_k,\theta_k)$. Hence, it provides us with a basic analysis framework for any iteration based on random steps.

\section{Stochastic gradient-free descent}
\label{SGFD:s3}

The fundamental idea of gradient-free methods is not to evaluate and apply gradients directly but to learn information about gradients indirectly through \emph{stochastic directions} and corresponding output feedbacks of the objective function. In the following, we first describe the gradient-free method and then analyze its behavior of iterations.

\subsection{Methods}

We now define our SGFD method as \cref{SGFD:alg:SGFD}. The random vector $\zeta_k\in\mathbb{R}^d$ here is referred to as stochastic direction. Very similar to stochastic gradient method \cite{BottouL2018R_SGD}, the algorithm also presumes that three computational tools exist: (i) a mechanism for generating a realization of random variables $\xi_k$ and $\zeta_k$ (with $\{\xi_k\}$ or $\{\zeta_k\}$ representing a sequence of jointly independent random variables); (ii) given an iteration number $k\in\mathbb{N}$, a mechanism for computing a scalar stepsize $\alpha_k>0$; and (iii) given an iterate $x_k\in\mathbb{R}^d$ and the realizations of $\xi_k,\zeta_k$ and $\alpha_k$, a mechanism for computing a stochastic step $s(x_k,\xi_k,\alpha_k,\zeta_k)\in\mathbb{R}^d$.

\begin{algorithm}
\caption{Stochastic Gradient-Free Descent (SGFD) Method}
\label{SGFD:alg:SGFD}
\begin{algorithmic}[1]
\STATE{Choose an initial iterate $x_1$.}
\FOR{$k=1,2,\cdots$}
\STATE{Generate a realization of the random variables $\xi_k$ and $\zeta_k$.}
\STATE{Choose a stepsize $\alpha_k>0$.}
\STATE{Compute a gradient-free stochastic step $s(x_k,\xi_k,\alpha_k,\zeta_k)$.}
\STATE{Set the new iterate as $x_{k+1}=x_k+s(x_k,\xi_k,\alpha_k,\zeta_k)$.}
\ENDFOR
\end{algorithmic}
\end{algorithm}

We consider the following four choices of the stochastic step $s(x_k,\xi_k,\alpha_k,\zeta_k)$:
\begin{equation}\label{SGFD:eq:ss}
\begin{aligned}
  s(x_k,\xi_k,\alpha_k,\zeta_k)\!=\!\left\{\!\!\!\!
  \begin{array}{c}
    \Big(f(x_k,\xi_k)-f(x_k+\alpha_k\zeta_k,\xi_k)\Big)\zeta_k, \\[0.75em]
    \frac{1}{n_k}\sum_{i=1}^{n_k}
    \Big(f(x_k,\xi_{k,i})-f(x_k+\alpha_k\zeta_k,\xi_{k,i})\Big)\zeta_k,
    \\[0.75em]
    \frac{1}{n_k}\!\sum_{i=1}^{n_k}\!\!\Big[\!\frac{1}{m_k}\!
    \sum_{j=1}^{m_k}\!\!\Big(f(x_k,\xi_{k,i,j})\!-\!f(x_k\!+\!
    \alpha_k\zeta_{k,i},\xi_{k,i,j})\!\Big)\Big]\zeta_{k,i},\\[0.75em]
    \frac{1}{n_k}\sum_{i=1}^{n_k}\Big(f(x_k,\xi_{k,i})-f(x_k
    +\alpha_k\zeta_{k,i},\xi_{k,i})\Big)\zeta_{k,i},
  \end{array}\right.
\end{aligned}
\end{equation}
where the value of the random variables $\xi_k$ and $\zeta_k$ need only be viewed as a seed for generating a stochastic step, and both $\{\xi_k\}$ and $\{\zeta_k\}$ are independent and identically distributed for every $k\in\mathbb{N}$. These four different possible choices in \cref{SGFD:eq:ss} depend on how we coordinate the random selection of data samples and directions. And it follows from \cref{SGFD:eq:f,SGFD:eq:ss} that
\begin{equation}\label{SGFD:eq:Ess}
  \mathbb{E}_{\xi_k,\zeta_k}[s(x_k,\xi_k,\alpha_k,\zeta_k)]
  =\mathbb{E}_{\zeta_k}
  \left[\Big(F(x_k)-F(x_k+\alpha_k\zeta_k)\Big)\zeta_k\right],
\end{equation}
which is referred to as an \emph{average descent direction} with respect to the stepsize $\alpha_k$ and the distribution of $\zeta_k$. And we use $\mathbb{E}[\cdot]$ to denote an expected value taken with respect to the joint distribution of all random variables, that is, the total expectation operator can be defined as
\begin{equation}\label{SGFD:eq:totalE}
  \mathbb{E}[\cdot]:=\mathbb{E}_{\xi_1,\zeta_1}\mathbb{E}_{\xi_2,\zeta_2} \cdots\mathbb{E}_{\xi_k,\zeta_k}[\cdot].
\end{equation}

Correspondingly, when  the objective function $F$ is able and easy to calculate, we would also consider the following two choices of the stochastic step:
\begin{equation}\label{SGFD:eq:fss}
\begin{aligned}
  s(x_k,\alpha_k,\zeta_k)=\left\{
  \begin{array}{c}
    \Big(F(x_k)-F(x_k+\alpha_k\zeta_k)\Big)\zeta_k, \\[0.75em]
    \frac{1}{n_k}\sum_{i=1}^{n_k}
    \Big(F(x_k)-F(x_k+\alpha_k\zeta_{k,i})\Big)\zeta_{k,i},
  \end{array}\right.
\end{aligned}
\end{equation}
which can be seen as special cases of  \cref{SGFD:eq:ss}.

\begin{lemma}\label{SGFD:lem:Estep}
Under \cref{SGFD:ass:A1}, the expectation of the stochastic steps \cref{SGFD:eq:ss} satisfy that for every $k\in\mathbb{N},\alpha_k>0$ and $x_k\in\mathbb{R}^d$, there is a $C_{x_k,\alpha_k\zeta_k}\in[-L,L]$, which depends on $x_k$ and $\alpha_k\zeta_k$ for every $\zeta_k$, such that
\begin{equation*}
  \mathbb{E}_{\xi_k,\zeta_k}[s(x_k,\xi_k,\alpha_k,\zeta_k)]
  =-\alpha_k\mathbb{E}_{\zeta_k}\!\!
  \left[\Big(\nabla F(x_k)^\mathrm{T}\zeta_k\Big)\zeta_k\right]
  -\frac{\alpha_k^2}{2}\mathbb{E}_{\zeta_k}\!\!
  \left[C_{x_k,\alpha_k\zeta_k}
  \Big(\zeta_k^\mathrm{T}\zeta_k\Big)\zeta_k\right].
\end{equation*}
\end{lemma}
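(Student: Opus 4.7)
The plan is to start from the identity \cref{SGFD:eq:Ess}, which reduces all four choices in \cref{SGFD:eq:ss} to the same expectation
\begin{equation*}
  \mathbb{E}_{\xi_k,\zeta_k}[s(x_k,\xi_k,\alpha_k,\zeta_k)]
  =\mathbb{E}_{\zeta_k}\!\left[\bigl(F(x_k)-F(x_k+\alpha_k\zeta_k)\bigr)\zeta_k\right].
\end{equation*}
This already fuses the data randomness $\xi_k$ away, so I only need to expand the scalar $F(x_k)-F(x_k+\alpha_k\zeta_k)$ in powers of $\alpha_k$ to first order with an explicit quadratic remainder and then multiply by $\zeta_k$ before taking the $\zeta_k$-expectation.

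Next, I would apply the consequence \cref{SGFD:eq:A1C} of \cref{SGFD:ass:A1} with $x'=x_k+\alpha_k\zeta_k$ and $x=x_k$. This gives
\begin{equation*}
  \bigl|F(x_k+\alpha_k\zeta_k)-F(x_k)-\alpha_k\nabla F(x_k)^\mathrm{T}\zeta_k\bigr|
  \leqslant \tfrac{L}{2}\alpha_k^2\,\zeta_k^\mathrm{T}\zeta_k.
\end{equation*}
Writing the signed remainder as $\tfrac{\alpha_k^2}{2}C_{x_k,\alpha_k\zeta_k}\,\zeta_k^\mathrm{T}\zeta_k$, where the scalar $C_{x_k,\alpha_k\zeta_k}$ depends deterministically on $x_k$ and on the realization $\alpha_k\zeta_k$, the above bound forces $C_{x_k,\alpha_k\zeta_k}\in[-L,L]$ (one divides both sides by $\tfrac{\alpha_k^2}{2}\zeta_k^\mathrm{T}\zeta_k>0$, and handles the $\zeta_k=0$ case by setting $C_{x_k,0}$ to any value in $[-L,L]$, since the corresponding contribution vanishes anyway). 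Thus
\begin{equation*}
  F(x_k)-F(x_k+\alpha_k\zeta_k)
  = -\alpha_k\nabla F(x_k)^\mathrm{T}\zeta_k
    -\tfrac{\alpha_k^2}{2}C_{x_k,\alpha_k\zeta_k}\,\zeta_k^\mathrm{T}\zeta_k.
\end{equation*}

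Finally, I would multiply this scalar identity through by $\zeta_k$ and take $\mathbb{E}_{\zeta_k}[\cdot]$, using linearity of expectation to split the two terms; combined with the first identity this yields exactly the claimed formula. There is no real obstacle here—the only subtle point is making sure that the remainder coefficient $C_{x_k,\alpha_k\zeta_k}$ is well-defined as a bounded measurable function of $\zeta_k$ (so that the expectation on the right-hand side makes sense), which is handled by the convention at $\zeta_k=0$ noted above. Everything else is a direct appeal to \cref{SGFD:eq:Ess} and the quadratic upper/lower bound \cref{SGFD:eq:A1C} implied by \cref{SGFD:ass:A1}.
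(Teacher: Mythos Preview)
Your proposal is correct and follows essentially the same route as the paper's own proof: apply the quadratic bound \cref{SGFD:eq:A1C} from \cref{SGFD:ass:A1} to the scalar $F(x_k)-F(x_k+\alpha_k\zeta_k)$, encode the remainder via a coefficient $C_{x_k,\alpha_k\zeta_k}\in[-L,L]$, multiply through by $\zeta_k$, and take $\mathbb{E}_{\zeta_k}$ using \cref{SGFD:eq:Ess}. Your extra care about the $\zeta_k=0$ case and measurability of the remainder coefficient goes slightly beyond what the paper spells out, but the argument is otherwise identical.
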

\begin{proof}
According to \cref{SGFD:eq:A1C}, there is a $C_{x_k,\alpha_k\zeta_k} \in[-L,L]$ for every $\zeta_k$ such that
\begin{align*}
  \Big(F(x_k)-F(x_k+\alpha_k\zeta_k)\Big)\zeta_k
  =-\alpha_k\Big(\nabla F(x_k)^\mathrm{T}\zeta_k\Big)\zeta_k
  -\frac{\alpha_k^2}{2}C_{x_k,\alpha_k\zeta_k}
  \Big(\zeta_k^\mathrm{T}\zeta_k\Big)\zeta_k,
\end{align*}
taking expectations with respect to the distribution of $\zeta_k$ and recalling \cref{SGFD:eq:Ess}, one obtains
\begin{align*}
  \mathbb{E}_{\xi_k,\zeta_k}[s(x_k,\xi_k,\alpha_k,\zeta_k)]
  &=\mathbb{E}_{\zeta_k}\!\!
  \left[\Big(F(x_k)-F(x_k+\alpha_k\zeta_k)\Big)\zeta_k\right] \\
  &=-\alpha_k\mathbb{E}_{\zeta_k}\!\!
  \left[\Big(\nabla F(x_k)^\mathrm{T}\zeta_k\Big)\zeta_k\right]
  -\frac{\alpha_k^2}{2}\mathbb{E}_{\zeta_k}\!\!
  \left[C_{x_k,\alpha_k\zeta_k}
  \Big(\zeta_k^\mathrm{T}\zeta_k\Big)\zeta_k\right],
\end{align*}
as desired.
\end{proof}

\subsection{Distribution of random directions}

We now formalize an assumption of distribution of random directions as follows. 
\begin{assumption}\label{SGFD:ass:A2}
The $d$-dimensional random vectors $\{\zeta_k\}$ are independent and identically distributed and simultaneously, for every $k\in\mathbb{N}$, $\zeta_k\in\mathbb{R}^d$ satisfy (i) the mean of each component is $0$, i.e.,
\begin{equation}\label{SGFD:eq:A2E}
  \mathbb{E}\big[\zeta_k^{(i)}\big]=0
  ~~\textrm{for all}~~i\in\{1,\cdots,d\};
\end{equation}
(ii) the covariance matrix is a unit matrix, i.e.,
\begin{equation}\label{SGFD:eq:A2C}
  \mathbb{E}\big[\zeta_k^{(i)}\zeta_k^{(j)}\big]=\left\{
  \begin{array}{cc}
    1, & i=j, \\
    0, & i\neq j;
  \end{array}\right.
\end{equation}
and (iii) every component is bounded or has a finite nonzero fourth moment, i.e., for all $i\in\{1,\cdots,d\}$, it holds that
\begin{equation}\label{SGFD:eq:A2F}
  \big|\zeta_k^{(i)}\big|\leqslant r_\zeta~~~\textrm{or}~~~
  \mathbb{E}\big[\zeta_k^{(i)}\big]^4\leqslant m_\zeta^{(4)},
\end{equation}
where $\zeta_k^{(i)}$ is the $i$th element in vector $\zeta_k$.
\end{assumption}

One of the typical choices for the distribution of $\zeta_k$ is, of course, the $d$-dimensional standard normal distribution with zero mean and unit covariance matrix, whose each component is obviously unbounded but has a finite fourth moment, say, $\mathbb{E}\big[\zeta_k^{(i)}\big]^4=3$ for all $i\in\{1,\cdots,d\}$; another typical choice is the $d$-dimensional uniform distribution on $[-\sqrt{3},\sqrt{3}]^d$, whose each component is bounded and has a finite fourth moment.

Under \cref{SGFD:ass:A2} alone, we could obtain the following lemma. Such a result is essential for convergence analyses of all our methods. And in fact, although not so intuitive, this lemma is a direct source of the basic idea for this work.
\begin{lemma}\label{SGFD:lem:E13}
Under \cref{SGFD:ass:A2}, for every vector $\omega\in\mathbb{R}^d$ independent of $\zeta_k$, it follows that
\begin{equation}\label{SGFD:eq:E}
  \omega=\mathbb{E}_{\zeta_k}\!\!
  \left[\big(\omega^\mathrm{T}\zeta_k\big)\zeta_k\right],
\end{equation}
and
\begin{equation}\label{SGFD:eq:E3}
  \left\|\mathbb{E}_{\zeta_k}\big[\big(\zeta_k^\mathrm{T} \zeta_k\big)
  \zeta_k\big]\right\|_\infty\leqslant dD_\zeta~~\textrm{and}~~
  \left\|\mathbb{E}_{\zeta_k}\big[\big(\zeta_k^\mathrm{T} \zeta_k\big)
  \zeta_k\big]\right\|_2\leqslant d^{\frac{3}{2}}D_\zeta,
\end{equation}
where $\{\zeta_k\}$ are independent and identically distributed,
\begin{equation*}
  D_\zeta=\min\Big(r_\zeta,\sqrt{1+m_\zeta^{(4)}/d-1/d}\Big),
\end{equation*}
$r_\zeta=\|\zeta_k\|_\infty$, and $m_\zeta^{(4)}$ is the fourth moment of $\zeta_k$.
\end{lemma}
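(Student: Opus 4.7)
The plan is to handle the two conclusions component-by-component, leveraging that both claims are essentially moment computations in the coordinates of $\zeta_k$.

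For the identity \cref{SGFD:eq:E}, I would simply expand the $i$th component of $\mathbb{E}_{\zeta_k}[(\omega^{\mathrm T}\zeta_k)\zeta_k]$: since $\omega$ is independent of $\zeta_k$, linearity of expectation gives
\begin{equation*}
\left(\mathbb{E}_{\zeta_k}[(\omega^{\mathrm T}\zeta_k)\zeta_k]\right)^{(i)}
= \sum_{j=1}^d \omega^{(j)}\, \mathbb{E}_{\zeta_k}\!\big[\zeta_k^{(j)}\zeta_k^{(i)}\big],
\end{equation*}
and by the identity-covariance condition \cref{SGFD:eq:A2C} only the $j=i$ term survives, yielding $\omega^{(i)}$. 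This part is essentially immediate.

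For \cref{SGFD:eq:E3}, I would first prove the $\ell_\infty$ bound and then deduce the $\ell_2$ bound from $\|v\|_2\leqslant\sqrt d\,\|v\|_\infty$. The $i$th component of $\mathbb{E}_{\zeta_k}[(\zeta_k^{\mathrm T}\zeta_k)\zeta_k]$ equals $\mathbb{E}_{\zeta_k}[\zeta_k^{(i)}\|\zeta_k\|_2^2]$, and I would bound this quantity according to the two branches of the min defining $D_\zeta$. In the bounded branch, $|\zeta_k^{(i)}|\leqslant r_\zeta$ pulls out directly and $\mathbb{E}[\|\zeta_k\|_2^2]=d$ by \cref{SGFD:eq:A2C}, giving $d r_\zeta$. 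In the fourth-moment branch, I would apply Cauchy--Schwarz to get $|\mathbb{E}[\zeta_k^{(i)}\|\zeta_k\|_2^2]|^2\leqslant \mathbb{E}[(\zeta_k^{(i)})^2]\,\mathbb{E}[\|\zeta_k\|_2^4]=\mathbb{E}[\|\zeta_k\|_2^4]$, then expand
\begin{equation*}
\mathbb{E}[\|\zeta_k\|_2^4]
= \sum_{j=1}^d\mathbb{E}\big[(\zeta_k^{(j)})^4\big]
+ \sum_{i\neq j}\mathbb{E}\big[(\zeta_k^{(i)})^2(\zeta_k^{(j)})^2\big],
\end{equation*}
using $\mathbb{E}[(\zeta_k^{(j)})^4]\leqslant m_\zeta^{(4)}$ on the diagonal and, on the off-diagonal, the fact that the squared components factorize so that $\mathbb{E}[(\zeta_k^{(i)})^2(\zeta_k^{(j)})^2]=1$ by \cref{SGFD:eq:A2C}. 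This yields $\mathbb{E}[\|\zeta_k\|_2^4]\leqslant d\,m_\zeta^{(4)}+d(d-1)=d^2\big(1+m_\zeta^{(4)}/d-1/d\big)$, and taking the square root recovers $d\sqrt{1+m_\zeta^{(4)}/d-1/d}$.

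The main obstacle is precisely the cross terms $\mathbb{E}[(\zeta_k^{(i)})^2(\zeta_k^{(j)})^2]$ for $i\neq j$ in the moment branch: the stated assumption \cref{SGFD:ass:A2} only gives uncorrelatedness of the components, not uncorrelatedness of their squares, so a naive Cauchy--Schwarz on each cross term only yields $m_\zeta^{(4)}$ per pair and produces the weaker bound $d\sqrt{m_\zeta^{(4)}}$ instead of $d D_\zeta$. I expect this is the reason the two canonical examples cited after the assumption (the standard Gaussian and the uniform on $[-\sqrt 3,\sqrt 3]^d$) both have independent coordinates; the proof should therefore invoke (or the assumption should be read as implicitly requiring) that $\mathbb{E}[(\zeta_k^{(i)})^2(\zeta_k^{(j)})^2]=1$ for $i\neq j$, which is the key structural fact that makes the gamma-type sharpening from $d\sqrt{m_\zeta^{(4)}}$ down to $d\sqrt{1+m_\zeta^{(4)}/d-1/d}$ go through. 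Once the $\ell_\infty$ estimate is established in both branches and combined through the min, the $\ell_2$ bound $d^{3/2}D_\zeta$ drops out from the standard norm inequality.
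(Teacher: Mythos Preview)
Your proposal is correct and follows essentially the same route as the paper: componentwise expansion plus the identity covariance for \cref{SGFD:eq:E}, and for \cref{SGFD:eq:E3} a two-branch argument (bounded case via $|\zeta_k^{(i)}|\leqslant r_\zeta$ and $\mathbb{E}\|\zeta_k\|_2^2=d$; fourth-moment case via Cauchy--Schwarz and expansion of $\mathbb{E}\|\zeta_k\|_2^4$), with the $\ell_2$ bound then read off from $\|v\|_2\leqslant\sqrt d\,\|v\|_\infty$. Your caveat about the cross terms is on target: the paper's own proof writes $\mathbb{E}\big[(\zeta_k^{(i)})^2(\zeta_k^{(l)})^2\big]=\mathbb{E}\big[(\zeta_k^{(i)})^2\big]\,\mathbb{E}\big[(\zeta_k^{(l)})^2\big]$ for $i\neq l$ without further comment, so the factorization you flagged is indeed used implicitly there as well.
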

\begin{proof}
Let $v=\mathbb{E}_{\zeta_k}[(\omega^\mathrm{T}\zeta_k)\zeta_k]$, then
\begin{equation*}
  v=\mathbb{E}_{\zeta_k}\!\bigg[\bigg(
  \sum_{i=1}^d\omega^{(i)}\zeta_k^{(i)}\bigg)\zeta_k\bigg]
  =\sum_{i=1}^d\omega^{(i)}\mathbb{E}_{\zeta_k}\!
  \big[\zeta_k^{(i)}\zeta_k\big],
\end{equation*}
from \cref{SGFD:ass:A2} we further observed that for all $1\leqslant j\leqslant d$,
\begin{equation*}
  v^{(j)}=\sum_{i=1}^d\omega^{(i)}\mathbb{E}_{\zeta_k}
  \big[\zeta_k^{(i)}\zeta_k^{(j)}\big]=\omega^{(j)},~~\textrm{that is},~~
  \omega=\mathbb{E}_{\zeta_k}[(\omega^\mathrm{T}\zeta_k)\zeta_k].
\end{equation*}

Secondly, from \cref{SGFD:ass:A2}, if every component of $\zeta_k$ is bounded, i.e., $|\zeta_k^{(i)}|\leqslant r_\zeta$, then one obtains
\begin{equation}\label{SGFD:eq:E31}
  \left\|\mathbb{E}_{\zeta_k}\big[\big(\zeta_k^\mathrm{T} \zeta_k\big)
  \zeta_k\big]\right\|_\infty
  \leqslant r_\zeta\bigg|\sum_{i=1}^d\mathbb{E}_{\zeta_k}
  \left[\zeta_k^{(i)}\zeta_k^{(i)}\right]\bigg|\leqslant dr_\zeta;
\end{equation}
or, if every component of $\zeta_k$ has a finite nonzero fourth moment $m_\zeta^{(4)}$, then according to Cauchy-Schwartz's inequality, it follows that
\begin{align*}
  \left\|\mathbb{E}_{\zeta_k}\big[\big(\zeta_k^\mathrm{T} \zeta_k\big)
  \zeta_k\big]\right\|_\infty\leqslant&\max_{1\leqslant j\leqslant d}
  \mathbb{E}_{\zeta_k}\bigg|\bigg(
  \sum_{i=1}^d\zeta_k^{(i)}\zeta_k^{(i)}\bigg)\zeta_k^{(j)}\bigg| \\
  \leqslant&\max_{1\leqslant j\leqslant d}\left[\mathbb{E}_{\zeta_k}
  \left(\zeta_k^{(j)}\zeta_k^{(j)}\right)\right]^{\frac{1}{2}}
  \bigg[\mathbb{E}_{\zeta_k}\bigg(\sum_{i=1}^d\zeta_k^{(i)}
  \zeta_k^{(i)}\bigg)^2\bigg]^{\frac{1}{2}}
\end{align*}
together with \cref{SGFD:eq:A2C}, we further have 
\begin{equation}\label{SGFD:eq:E32}
\begin{split}
  \left\|\mathbb{E}_{\zeta_k}\big[\big(\zeta_k^\mathrm{T}\zeta_k\big)
  \zeta_k\big]\right\|_\infty\leqslant&\bigg[\mathbb{E}_{\zeta_k}
  \bigg(\sum_{1\leqslant i,s\leqslant d}\zeta_k^{(i)}\zeta_k^{(i)}
  \zeta_k^{(s)}\zeta_k^{(s)}\bigg)\bigg]^{\frac{1}{2}} \\
  \leqslant&\bigg[\sum_{i=1}^d
  \mathbb{E}_{\zeta_k}\big(\zeta_k^{(i)}\big)^4
  +\sum_{i\neq l}\mathbb{E}_{\zeta_k}\big(
  \zeta_k^{(i)}\zeta_k^{(i)}\big)\mathbb{E}_{\zeta_k}\big(
  \zeta_k^{(l)}\zeta_k^{(l)}\big)\bigg]^{\frac{1}{2}} \\
  \leqslant&\sqrt{d\big(d+m_\zeta^{(4)}-1\big)},
\end{split}
\end{equation}
then it follows from \cref{SGFD:eq:E31,SGFD:eq:E32} that $\|\mathbb{E}_{\zeta_k}[(\zeta_k^\mathrm{T}\zeta_k)\zeta_k]\|_\infty\leqslant dD_\zeta$; and further, 
\begin{equation*}
  \|\mathbb{E}_{\zeta_k} [(\zeta_k^\mathrm{T}\zeta_k)\zeta_k]\|_2
  \leqslant d^{\frac{1}{2}}\|\mathbb{E}_{\zeta_k} [(\zeta_k^\mathrm{T}\zeta_k)\zeta_k]\|_\infty
  \leqslant d^{\frac{3}{2}}D_\zeta,
\end{equation*}
and the proof is complete.
\end{proof}

\subsection{Expectation analysis}

The important role of \cref{SGFD:ass:A2} is to ensure that $\frac{1}{\alpha_k}\mathbb{E}_{\xi_k,\zeta_k} [s(x_k,\xi_k,\alpha_k,\zeta_k)]$ an asymptotic unbiased estimator of $-\nabla F(x_k)$. And it also allows us to analyze the bound of the quadratic form \cref{SGFD:eq:Q} in the expectation of stochastic directions.

\begin{theorem}\label{SGFD:thm:step}
Under \cref{SGFD:ass:A1,SGFD:ass:A2}, the stochastic steps of SGFD (\cref{SGFD:alg:SGFD}) satisfy the asymptotic unbiasedness
\begin{equation}\label{SGFD:eq:limit}
  \lim_{\alpha_k\to0}\frac{\mathbb{E}_{\xi_k,\zeta_k}
  \big[s(x_k,\xi_k,\alpha_k,\zeta_k)\big]}{\alpha_k}=-\nabla F(x_k),
\end{equation}
and for every $k\in\mathbb{N}$, it follows that
\begin{align}
  \|\mathbb{E}_{\xi_k,\zeta_k}[s(x_k,\xi_k,\alpha_k,\zeta_k)]\|_2
  \leqslant&\alpha_k\|\nabla F(x_k)\|_2
  +\frac{\alpha_k^2Ld^{\frac{3}{2}}D_\zeta}{2}
  ~~\textrm{and}~~\label{SGFD:eq:step1} \\
  \nabla F(x_k)^\mathrm{T}\mathbb{E}_{\xi_k,\zeta_k}
  [s(x_k,\xi_k,\alpha_k,\zeta_k)]\leqslant&
  -\alpha_k\|\nabla F(x_k)\|_2^2
  +\frac{\alpha_k^2Ld^{\frac{3}{2}}D_\zeta}{2}\|\nabla F(x_k)\|_2,
  \label{SGFD:eq:step2}
\end{align}
where the constant $D_\zeta$ comes from \cref{SGFD:lem:E13}.
\end{theorem}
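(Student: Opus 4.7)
The plan is to simply combine the representation of $\mathbb{E}_{\xi_k,\zeta_k}[s(x_k,\xi_k,\alpha_k,\zeta_k)]$ already established in \cref{SGFD:lem:Estep} with the two identities of \cref{SGFD:lem:E13}, and then read off all three conclusions. First I would start from
\begin{equation*}
  \mathbb{E}_{\xi_k,\zeta_k}[s(x_k,\xi_k,\alpha_k,\zeta_k)]
  =-\alpha_k\mathbb{E}_{\zeta_k}\!\left[(\nabla F(x_k)^{\mathrm T}\zeta_k)\zeta_k\right]
  -\frac{\alpha_k^2}{2}\mathbb{E}_{\zeta_k}\!\left[C_{x_k,\alpha_k\zeta_k}(\zeta_k^{\mathrm T}\zeta_k)\zeta_k\right].
\end{equation*}
Since $\nabla F(x_k)$ is independent of $\zeta_k$, the identity \cref{SGFD:eq:E} with $\omega=\nabla F(x_k)$ collapses the first expectation to $\nabla F(x_k)$, so the leading term is exactly $-\alpha_k\nabla F(x_k)$.

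Next I would bound the quadratic-in-$\alpha_k$ residual. The obstacle here is that $C_{x_k,\alpha_k\zeta_k}\in[-L,L]$ depends on $\zeta_k$, so \cref{SGFD:eq:E3} cannot be applied as a black box. My approach is to argue componentwise: for each $j$, $|\mathbb{E}_{\zeta_k}[C_{x_k,\alpha_k\zeta_k}(\zeta_k^{\mathrm T}\zeta_k)\zeta_k^{(j)}]|\leqslant L\,\mathbb{E}_{\zeta_k}|(\zeta_k^{\mathrm T}\zeta_k)\zeta_k^{(j)}|$. Inspection of the proof of \cref{SGFD:eq:E3} shows that the bound $d D_\zeta$ was actually derived on the quantity $\mathbb{E}_{\zeta_k}|(\zeta_k^{\mathrm T}\zeta_k)\zeta_k^{(j)}|$ in both the bounded and finite fourth-moment cases, so the same chain of inequalities gives $\|\mathbb{E}_{\zeta_k}[C_{x_k,\alpha_k\zeta_k}(\zeta_k^{\mathrm T}\zeta_k)\zeta_k]\|_\infty\leqslant L d D_\zeta$ and hence $\|\cdot\|_2\leqslant L d^{3/2}D_\zeta$ via $\|\cdot\|_2\leqslant\sqrt{d}\|\cdot\|_\infty$.

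Once these two pieces are in hand, the three conclusions are essentially algebra. For \cref{SGFD:eq:limit}, dividing the displayed expression by $\alpha_k$ leaves $-\nabla F(x_k)$ plus a residual of norm $\mathcal{O}(\alpha_k)$ which vanishes as $\alpha_k\to 0$. For \cref{SGFD:eq:step1}, the triangle inequality applied to the same expression yields $\|\mathbb{E}_{\xi_k,\zeta_k}[s]\|_2\leqslant\alpha_k\|\nabla F(x_k)\|_2+\tfrac{\alpha_k^2}{2}Ld^{3/2}D_\zeta$. For \cref{SGFD:eq:step2}, I would take the inner product with $\nabla F(x_k)$: the first term gives exactly $-\alpha_k\|\nabla F(x_k)\|_2^2$, and the second term is controlled by Cauchy--Schwarz together with the residual norm bound just obtained, yielding $\tfrac{\alpha_k^2}{2}Ld^{3/2}D_\zeta\|\nabla F(x_k)\|_2$.

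The only genuinely delicate point in the whole argument is the one flagged above: one must not treat $C_{x_k,\alpha_k\zeta_k}$ as a constant with respect to $\zeta_k$. Handling this cleanly either by a componentwise argument or, equivalently, by Jensen's inequality $\|\mathbb{E}[\,\cdot\,]\|_2\leqslant\mathbb{E}\|\cdot\|_2$ together with the pointwise bound $|C_{x_k,\alpha_k\zeta_k}|\leqslant L$, is the main step that requires care; everything else is a direct consequence of \cref{SGFD:lem:Estep,SGFD:lem:E13}.
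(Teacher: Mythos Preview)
Your proposal is correct and follows essentially the same route as the paper's proof: both start from \cref{SGFD:lem:Estep}, apply \cref{SGFD:eq:E} with $\omega=\nabla F(x_k)$ to identify the leading term, bound the $\alpha_k^2$-residual via the $\ell_\infty$-to-$\ell_2$ route of \cref{SGFD:lem:E13}, and then read off the three conclusions by triangle inequality and Cauchy--Schwarz. Your treatment of the residual is in fact slightly more careful than the paper's---the paper passes through $\mathbb{E}_{\zeta_k}\big\|(\zeta_k^{\mathrm T}\zeta_k)\zeta_k\big\|_\infty$, which is not literally the quantity bounded in the statement of \cref{SGFD:lem:E13}, whereas you correctly note that the \emph{proof} of \cref{SGFD:lem:E13} already controls $\mathbb{E}_{\zeta_k}\big|(\zeta_k^{\mathrm T}\zeta_k)\zeta_k^{(j)}\big|$ componentwise, which is exactly what is needed once $|C_{x_k,\alpha_k\zeta_k}|\leqslant L$ is pulled out.
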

\begin{proof}
According to \cref{SGFD:lem:Estep}, there are $C_{x_k,\alpha_k\zeta_k}\in[-L,L]$ such that
\begin{equation*}
  \mathbb{E}_{\xi_k,\zeta_k}[s(x_k,\xi_k,\alpha_k,\zeta_k)]
  =-\alpha_k\mathbb{E}_{\zeta_k}\!\!
  \left[\Big(\nabla F(x_k)^\mathrm{T}\zeta_k\Big)\zeta_k\right]
  -\frac{\alpha_k^2}{2}\mathbb{E}_{\zeta_k}\!\!
  \left[C_{x_k,\alpha_k\zeta_k}
  \Big(\zeta_k^\mathrm{T}\zeta_k\Big)\zeta_k\right].
\end{equation*}
First, it follows from \cref{SGFD:lem:E13} that
\begin{equation*}
  \nabla F(x_k)=\mathbb{E}_{\zeta_k}\!\!
  \left[\Big(\nabla F(x_k)^\mathrm{T}\zeta_k\Big)\zeta_k\right];
\end{equation*}
and then, let $R_k=\mathbb{E}_{\zeta_k}\big[C\big(\zeta_k^\mathrm{T} \zeta_k\big)\zeta_k\big]$, then from \cref{SGFD:ass:A2,SGFD:lem:E13}, there is a $D_\zeta\in(0,\infty)$ such that
\begin{equation*}
  \|R_k\|_\infty\leqslant L\mathbb{E}_{\zeta_k}\big\|\big(\zeta_k^\mathrm{T} \zeta_k\big)\zeta_k\big\|_\infty\leqslant dLD_\zeta~~\textrm{and}~~
  \|R_k\|_2\leqslant L\mathbb{E}_{\zeta_k}\big\|\big(\zeta_k^\mathrm{T} \zeta_k\big)\zeta_k\big\|_2\leqslant d^{\frac{3}{2}}LD_\zeta.
\end{equation*}
Thus, one obtains \cref{SGFD:eq:limit,SGFD:eq:step1} by noting that
\begin{align*}
  &\bigg\|\frac{\mathbb{E}_{\xi_k,\zeta_k}
  \big[s(x_k,\xi_k,\alpha_k,\zeta_k)\big]}{\alpha_k}
  +\nabla F(x_k)\bigg\|_\infty
  \leqslant\frac{\alpha_k}{2}\|R_k\|_\infty~~\textrm{and}~~\\
  &~~\|\mathbb{E}_{\xi_k,\zeta_k}[s(x_k,\xi_k,\alpha_k,\zeta_k)]\|_2
  \leqslant\alpha_k\|\nabla F(x_k)\|_2+\frac{\alpha_k^2}{2}\|R_k\|_2;
\end{align*}
and further obtains \cref{SGFD:eq:step2} by noting that
\begin{align*}
  \nabla F(x_k)^\mathrm{T}\mathbb{E}_{\xi_k,\zeta_k}
  [s(x_k,\xi_k,\alpha_k,\zeta_k)]=&-\alpha_k\|\nabla F(x_k)\|_2^2
  -\frac{\alpha_k^2}{2}\nabla F(x_k)^\mathrm{T}R_k \\
  \leqslant&-\alpha_k\|\nabla F(x_k)\|_2^2
  +\frac{\alpha_k^2}{2}\|\nabla F(x_k)\|_2\|R_k\|_2,
\end{align*}
and the proof is complete.
\end{proof}

\subsection{Variance assumption}

We follow \cite{BottouL2018R_SGD} to make the following assumption about the variance of random steps $s(x_k,\xi_k,\alpha_k,\zeta_k)$. It states that the variance of $s(x_k,\xi_k,\alpha_k,\zeta_k)$ is restricted, but in a relatively minor manner.
\begin{assumption}\label{SGFD:ass:A3}
The objective function and SGFD (\cref{SGFD:alg:SGFD}) satisfy for all $k\in\mathbb{N}$, there exist scalars $M\geqslant0$ and $M_V\geqslant0$ such that
\begin{equation*}
  \mathbb{V}_{\xi_k,\zeta_k}[s(x_k,\xi_k,\alpha_k,\zeta_k)]
  \leqslant\alpha_k^2M+\alpha_k^2M_V\|\nabla F(x_k)\|_2^2.
\end{equation*}
\end{assumption}

According to \cref{SGFD:lem:AL1,SGFD:thm:step,SGFD:ass:A3}, it can be seen that for all $k\in\mathbb{N}$, the difference $\mathbb{E}_{\xi_k,\zeta_k} [F(x_{k+1})]-F(x_k)$ is bounded above by a deterministic quantity. This is what the following lemma states.
\begin{lemma}\label{SGFD:lem:EB}
Under \cref{SGFD:ass:A1,SGFD:ass:A2,SGFD:ass:A3}, now suppose that the SGFD method (\cref{SGFD:alg:SGFD}) is run with $0<\alpha_k\leqslant\frac{1}{L}$ for every $k\in\mathbb{N}$, then
\begin{equation*}
  \mathbb{E}_{\xi_k,\zeta_k}[F(x_{k+1})]-F(x_k)\leqslant
  -\alpha_k\|\nabla F(x_k)\|_2^2
  +\frac{\alpha_k^2LM_G}{2}\|\nabla F(x_k)\|_2^2
  +\frac{\alpha_k^2LM_d}{2},
\end{equation*}
where $M_G=M_V+2$, $M_d=M+\frac{5d^3D_\zeta^2}{4}$, and the constant $D_\zeta$ comes from \cref{SGFD:lem:E13}.
\end{lemma}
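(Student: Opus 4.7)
The starting point is \cref{SGFD:lem:AL1} applied with $\theta_k = (\xi_k,\zeta_k)$ and $s=s(x_k,\xi_k,\alpha_k,\zeta_k)$, which gives the three-term bound
\begin{equation*}
  \mathbb{E}_{\xi_k,\zeta_k}[F(x_{k+1})]-F(x_k)\leqslant
  \nabla F(x_k)^{\mathrm T}\mathbb{E}[s]+\tfrac{L}{2}\|\mathbb{E}[s]\|_2^2
  +\tfrac{L}{2}\mathbb{V}[s].
\end{equation*}
My plan is to estimate each of the three terms in turn. For the first, I use \cref{SGFD:eq:step2} directly, which already gives $-\alpha_k\|\nabla F(x_k)\|_2^2$ plus a linear-in-$\|\nabla F(x_k)\|_2$ correction of size $\tfrac{\alpha_k^2 L d^{3/2}D_\zeta}{2}\|\nabla F(x_k)\|_2$. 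For the variance term I invoke \cref{SGFD:ass:A3}, which contributes $\tfrac{\alpha_k^2 LM}{2}+\tfrac{\alpha_k^2 LM_V}{2}\|\nabla F(x_k)\|_2^2$. The delicate piece is $\tfrac{L}{2}\|\mathbb{E}[s]\|_2^2$: squaring the bound from \cref{SGFD:eq:step1} and expanding produces one $\alpha_k^2\|\nabla F(x_k)\|_2^2$ contribution, an $\alpha_k^3 Ld^{3/2}D_\zeta\|\nabla F(x_k)\|_2$ cross-term, and an $\alpha_k^4 L^2 d^3 D_\zeta^2/4$ constant term.

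Once these three are assembled, the next step is to clean up using $\alpha_k\leqslant 1/L$. I would first use $\alpha_k L\leqslant 1$ to absorb the cubic cross-term into the quadratic cross-term, and likewise shrink the quartic constant $\tfrac{\alpha_k^4 L^3 d^3 D_\zeta^2}{8}$ to $\tfrac{\alpha_k^2 L d^3 D_\zeta^2}{8}$. After this reduction, the only surviving linear-in-$\|\nabla F(x_k)\|_2$ term is a multiple of $\alpha_k^2 L d^{3/2}D_\zeta\|\nabla F(x_k)\|_2$. I would then kill this cross-term via Young's inequality in the form
\begin{equation*}
  d^{3/2}D_\zeta\|\nabla F(x_k)\|_2\leqslant
  \tfrac12\|\nabla F(x_k)\|_2^2+\tfrac12 d^{3}D_\zeta^2,
\end{equation*}
splitting it cleanly into a quadratic-gradient piece and a pure constant piece, which is what the statement requires.

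Finally I would tally the coefficients. The quadratic-in-$\|\nabla F(x_k)\|_2$ pieces coming from the three sources (the $\tfrac{L}{2}\|\mathbb{E}[s]\|_2^2$ expansion, the variance bound, and the Young's split) combine to $\tfrac{\alpha_k^2 L(1+M_V+1)}{2}=\tfrac{\alpha_k^2 L M_G}{2}$ with $M_G=M_V+2$. The constant pieces — the variance $\tfrac{\alpha_k^2 LM}{2}$, the Young leftover $\tfrac{\alpha_k^2 L d^3 D_\zeta^2}{2}$ (half of the linear bound, times $L$), and the compressed quartic $\tfrac{\alpha_k^2 L d^3 D_\zeta^2}{8}$ — combine to $\tfrac{\alpha_k^2 L}{2}\bigl(M+d^{3}D_\zeta^{2}+\tfrac14 d^{3}D_\zeta^{2}\bigr)=\tfrac{\alpha_k^2 L M_d}{2}$ with $M_d=M+\tfrac{5d^3D_\zeta^2}{4}$.

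I expect the main obstacle to be purely bookkeeping: tracking the factors of $L$, $\alpha_k$, and $d^{3/2}D_\zeta$ through the squaring of \cref{SGFD:eq:step1} while correctly deciding which cross-terms to kill with $\alpha_k L\leqslant 1$ versus with Young's inequality, so that the two constants $M_G$ and $M_d$ land exactly as stated. There is no genuine conceptual step beyond the three already-proved inputs (\cref{SGFD:lem:AL1,SGFD:thm:step}, and \cref{SGFD:ass:A3}); the only structural choice is the $(\tfrac12,\tfrac12)$ split in Young's inequality, which is pinned down uniquely by the requirement that the coefficient of $\|\nabla F(x_k)\|_2^2$ come out to $\tfrac{\alpha_k^2 L M_G}{2}$ with $M_G=M_V+2$.
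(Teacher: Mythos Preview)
Your proposal is correct and follows essentially the same route as the paper: apply \cref{SGFD:lem:AL1}, bound the three terms via \cref{SGFD:thm:step} and \cref{SGFD:ass:A3}, then clean up the cross-terms with AM--GM/Young and $\alpha_kL\leqslant1$. The only cosmetic difference is ordering: the paper applies AM--GM to each cross-term separately (once inside the bound for $\|\mathbb{E}[s]\|_2^2$ and once for $\nabla F(x_k)^{\mathrm T}\mathbb{E}[s]$) before invoking $\alpha_kL\leqslant1$, whereas you first use $\alpha_kL\leqslant1$ to merge the two cross-terms and then apply Young once---the constants come out identical either way.
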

\begin{proof}
It follows from \cref{SGFD:eq:step1} that 
\begin{align*}
  \left\|\mathbb{E}_{\xi_k,\zeta_k}
  \big[s(x_k,\xi_k,\alpha_k,\zeta_k)\big]\right\|_2^2
  \leqslant&\bigg(\alpha_k\|\nabla F(x_k)\|_2
  +\frac{\alpha_k^2Ld^{\frac{3}{2}}D_\zeta}{2}\bigg)^2 \\
  \leqslant&\alpha_k^2\|\nabla F(x_k)\|_2^2
  +\alpha_k^3Ld^{\frac{3}{2}}D_\zeta\|\nabla F(x_k)\|_2
  +\frac{\alpha_k^4L^2d^3D_\zeta^2}{4},
\end{align*}
together with the Arithmetic Mean Geometric Mean inequality, i.e.,
\begin{equation*}
  \alpha_k^3Ld^{\frac{3}{2}}D_\zeta\|\nabla F(x_k)\|_2
  =\alpha_k\|\nabla F(x_k)\|_2\cdot\alpha_k^2Ld^{\frac{3}{2}}D_\zeta
  \leqslant\frac{\alpha_k^2}{2}\|\nabla F(x_k)\|_2^2
  +\frac{\alpha_k^4L^2d^3D_\zeta^2}{2},
\end{equation*}
thus, by noting that $\alpha_kL<1$, one obtains
\begin{equation*}
  \left\|\mathbb{E}_{\xi_k,\zeta_k}
  \big[s(x_k,\xi_k,\alpha_k,\zeta_k)\big]\right\|_2^2
  \leqslant\frac{3\alpha_k^2}{2}\|\nabla F(x_k)\|_2^2
  +\frac{3\alpha_k^2d^3D_\zeta^2}{4}.
\end{equation*}
Similarly, by \cref{SGFD:eq:step2} and the Arithmetic Mean Geometric Mean inequality, it holds that
\begin{align*}
  \nabla F(x_k)^\mathrm{T}\mathbb{E}_{\xi_k,\zeta_k}
  \big[s(x_k,\xi_k,\alpha_k,\zeta_k)\big]
  \leqslant&-\alpha_k\|\nabla F(x_k)\|_2^2
  +\frac{\alpha_k^2L}{2}d^{\frac{3}{2}}D_\zeta\|\nabla F(x_k)\|_2 \\
  \leqslant&-\alpha_k\|\nabla F(x_k)\|_2^2
  +\frac{\alpha_k^2L}{4}\|\nabla F(x_k)\|_2^2
  +\frac{\alpha_k^2Ld^3D_\zeta^2}{4}.
\end{align*}
Finally, by \cref{SGFD:lem:AL1} and \cref{SGFD:ass:A3}, the iterates generated by SGFD satisfy
\begin{align*}
  \mathbb{E}_{\xi_k,\zeta_k}[F(x_{k+1})]\!-\!F(x_k)
  \leqslant&\nabla F(x_k)^\mathrm{T}\mathbb{E}_{\xi_k,\zeta_k}
  \big[s(x_k,\xi_k,\alpha_k,\zeta_k)\big] \\
  &+\!\frac{L}{2}\|\mathbb{E}_{\xi_k,\zeta_k}\!
  \big[s(x_k,\xi_k,\alpha_k,\zeta_k)\big]\|_2^2\!
  +\!\frac{L}{2}\mathbb{V}_{\xi_k,\zeta_k}\!
  \big[s(x_k,\xi_k,\alpha_k,\zeta_k)\big] \\
  \leqslant&-\alpha_k\|\nabla F(x_k)\|_2^2
  +\frac{\alpha_k^2LM_G}{2}\|\nabla F(x_k)\|_2^2
  +\frac{\alpha_k^2LM_d}{2},
\end{align*}
and the proof is complete.
\end{proof}

This lemma reveals that regardless of how the method arrived at the iterate $x_k$, the optimization process continues in a Markovian manner in the sense that $x_{k+1}$ is a random variable that depends only on the iterate $x_k$, the seeds $\xi_k$ and $\zeta_k$, and the stepsize $\alpha_k$ and not on any past iterates. This is the same as in the case of stochastic gradients \cite{BottouL2018R_SGD}.

\subsection{Average behavior of iterations}

Here we mainly focus on the strongly convex objective functions. According to \cref{SGFD:lem:EB}, it is easy to analyze the average behavior of the SGFD iterations for strongly convex objectives.
\begin{theorem}\label{SGFD:thm:AB}
Under \cref{SGFD:ass:A1,SGFD:ass:A2,SGFD:ass:A3,SGFD:ass:ASC}, now suppose that the SGFD method (\cref{SGFD:alg:SGFD}) is run with $0<\alpha_k\leqslant\frac{1}{LM_G}$ for every $k\in\mathbb{N}$, then
\begin{equation*}
  \mathbb{E}[F(x_{k+1})-F_*]\leqslant
  \bigg(\prod_{i=1}^k(1-\alpha_il)\bigg)[F(x_1)-F_*]
  +\frac{LM_d}{2}\sum_{i=1}^k\alpha_i^2\prod_{j=i+1}^k(1-\alpha_jl).
\end{equation*}
\end{theorem}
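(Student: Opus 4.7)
The plan is to start from the per-iteration bound in \cref{SGFD:lem:EB} and reduce it to a simple linear recursion in $\mathbb{E}[F(x_k)-F_*]$, which then unrolls into the claimed closed form. First I would combine the two gradient-norm terms on the right-hand side of \cref{SGFD:lem:EB}. Because $0<\alpha_k\leqslant 1/(LM_G)$, we have $\alpha_k^2 LM_G/2 \leqslant \alpha_k/2$, so
\begin{equation*}
  -\alpha_k\|\nabla F(x_k)\|_2^2 + \frac{\alpha_k^2 L M_G}{2}\|\nabla F(x_k)\|_2^2
  \leqslant -\frac{\alpha_k}{2}\|\nabla F(x_k)\|_2^2,
\end{equation*}
which collapses \cref{SGFD:lem:EB} into
\begin{equation*}
  \mathbb{E}_{\xi_k,\zeta_k}[F(x_{k+1})]-F(x_k)
  \leqslant -\frac{\alpha_k}{2}\|\nabla F(x_k)\|_2^2 + \frac{\alpha_k^2 LM_d}{2}.
\end{equation*}

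Next I would invoke the Polyak--{\L}ojasiewicz inequality \cref{SGFD:eq:convexity}, which is available since $F$ is strongly convex under \cref{SGFD:ass:ASC}. Replacing $\|\nabla F(x_k)\|_2^2$ by the lower bound $2l(F(x_k)-F_*)$ and then subtracting $F_*$ from both sides converts the previous inequality into the contraction
\begin{equation*}
  \mathbb{E}_{\xi_k,\zeta_k}[F(x_{k+1})-F_*]
  \leqslant (1-\alpha_k l)(F(x_k)-F_*) + \frac{\alpha_k^2 LM_d}{2}.
\end{equation*}
I would then take the total expectation defined in \cref{SGFD:eq:totalE}, using the independence of $(\xi_k,\zeta_k)$ from the earlier random variables, to obtain
\begin{equation*}
  \mathbb{E}[F(x_{k+1})-F_*]
  \leqslant (1-\alpha_k l)\,\mathbb{E}[F(x_k)-F_*] + \frac{\alpha_k^2 LM_d}{2}.
\end{equation*}

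Finally I would unroll this one-step recursion by induction on $k$, with the standard convention that an empty product equals $1$, which immediately yields the claimed bound with the geometric product factor $\prod_{i=1}^k(1-\alpha_il)$ multiplying the initial gap and the weighted sum $\sum_{i=1}^k\alpha_i^2\prod_{j=i+1}^k(1-\alpha_jl)$ accumulating the noise contributions. The only point that requires a small remark is verifying $1-\alpha_k l\geqslant 0$ so that the recursion is meaningfully contractive: since $l\leqslant L$ (noted just after \cref{SGFD:ass:ASC}) and $M_G=M_V+2\geqslant 2$, the stepsize constraint gives $\alpha_k l \leqslant l/(LM_G)\leqslant 1/M_G\leqslant 1/2$. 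The rest is a routine induction, and I expect no genuine obstacle beyond bookkeeping of the empty-product convention at the endpoints of the sum.
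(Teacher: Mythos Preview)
Your proposal is correct and follows essentially the same route as the paper: collapse \cref{SGFD:lem:EB} via the stepsize bound, apply the Polyak--{\L}ojasiewicz inequality \cref{SGFD:eq:convexity} to get the one-step contraction, take total expectations, and unroll. The extra verification that $1-\alpha_k l\geqslant 0$ is a nice touch the paper omits, but otherwise the arguments are identical.
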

\begin{proof}
According to \cref{SGFD:lem:EB} and $0<\alpha_k\leqslant\frac{1}{LM_G}$, we have
\begin{align*}
  \mathbb{E}_{\xi_k,\zeta_k}[F(x_{k+1})]\leqslant&
  F(x_k)-\alpha_k\|\nabla F(x_k)\|_2^2
  +\frac{\alpha_k^2LM_G}{2}\|\nabla F(x_k)\|_2^2
  +\frac{\alpha_k^2LM_d}{2} \\
  \leqslant&F(x_k)-\frac{\alpha_k}{2}\|\nabla F(x_k)\|_2^2
  +\frac{\alpha_k^2LM_d}{2}.
\end{align*}
Subtracting $F_*$ from both sides and applying \cref{SGFD:eq:convexity}, this yields
\begin{align*}
  \mathbb{E}_{\xi_k,\zeta_k}[F(x_{k+1})]-F_*
  \leqslant&F(x_k)-F_*-\frac{\alpha_k}{2}\|\nabla F(x_k)\|_2^2
  +\frac{\alpha_k^2LM_d}{2} \\
  \leqslant&F(x_k)-F_*-\alpha_kl\big(F(x_k)-F_*\big)
  +\frac{\alpha_k^2LM_d}{2} \\
  =&(1-\alpha_kl)\big(F(x_k)-F_*\big)+\frac{\alpha_k^2LM_d}{2},
\end{align*}
and it follows from taking total expectations that
\begin{equation*}
  \mathbb{E}[F(x_{k+1})-F_*]\leqslant(1-\alpha_kl)
  \mathbb{E}[F(x_k)-F_*]+\frac{\alpha_k^2LM_d}{2}.
\end{equation*}
Thus, the desired result follows by repeatedly applying this inequality above through iteration from $1$ to $k$.
\end{proof}

\subsection{Convergence for strongly convex objectives}

Here we will use a new analysis approach based on the properties of the gamma function. First, according to \cref{SGFD:thm:AB}, under strong convex conditions, the convergence of the SGFD methods is closely related to the following two limits:
\begin{equation}\label{SGFD:eq:limits}
  A:=\lim_{k\to\infty}A_k~~~\textrm{and}~~~B:=\lim_{k\to\infty}B_k,
\end{equation}
where $A_k=\prod_{i=1}^k(1-\alpha_il)$ and $B_k=\sum_{i=1}^k\alpha_i^2\prod_{j=i+1}^k(1-\alpha_jl)$. Therefore, the result in \cref{SGFD:thm:AB} can be rewritten as
\begin{equation*}
  \mathbb{E}[F(x_{k+1})-F_*]\leqslant[F(x_1)-F_*]A_k
  +\frac{LM_d}{2}B_k.
\end{equation*}
More specifically, if there is a non-increasing stepsize sequence $\{\alpha_k\}$, satisfying $\alpha_k\leqslant\frac{1}{LM_G}$, such that $A=0$ and $B=0$, then we shall obtain the expected convergence $\lim_{k\to\infty}\mathbb{E}[F(x_k)]=F_*$. 

Now let us analyze the requirements that $A$ and $B$ are equal to $0$ at the same time. First, since $\alpha_kl<1$ for every $k\in\mathbb{N}$, we have
\begin{equation*}
  B=\lim_{k\to\infty}\sum_{i=1}^k\alpha_i^2\!\prod_{j=i+1}^k\!(1-\alpha_jl)
  >\!\bigg(\lim_{k\to\infty}\sum_{i=1}^k\alpha_i^2\bigg)
  \!\bigg(\lim_{k\to\infty}\prod_{i=1}^k(1-\alpha_il)\!\bigg)\!
  =A\lim_{k\to\infty}\sum_{i=1}^k\alpha_i^2,
\end{equation*}
therefore, $B=0$ implies $A=0$. This requires that the stepsize sequence $\{\alpha_k\}$ cannot decay too quickly; for example, if we choose a stepsize sequence $\alpha_k=\frac{\beta}{k^2}$ with $\beta<l^{-1}$ for every $k\in\mathbb{N}$, then according to Euler's product formula for sine, we have
\begin{equation*}
  A=\lim_{k\to\infty}\prod_{i=1}^k\left(1-\frac{\beta l}{i^2}\right)
  =\frac{\sin(\sqrt{\beta l}\pi)}{\sqrt{\beta l}\pi}\neq0,
  ~~\textrm{where}~~0<\beta l<1.
\end{equation*}
And otherwise, if we choose a fixed stepsize $\alpha_k=\bar{\alpha}=\frac{\bar{\alpha}}{k^0}$ for every $k\in\mathbb{N}$, then we will find out
\begin{equation*}
  B=\bar{\alpha}^2\lim_{k\to\infty}\sum_{i=1}^k(1-\bar{\alpha}l)^{k-i}
  =\frac{\bar{\alpha}}{l}\neq0;
\end{equation*}
and by further noting that $A=\lim_{k\to\infty}(1-\bar{\alpha}l)^k=0$ at the same time, so it follows that, similar to the case of the stochastic gradient descent method \cite{BottouL2018R_SGD}, the expected objective values will converge linearly to a neighborhood of the optimal value, but the random directions prevent further progress after some point; and it is apparent that selecting a smaller stepsize worsens the contraction constant $1-\bar{\alpha}l$ in the convergence rate, but allows one to arrive closer, i.e., bounded by $\frac{\bar{\alpha}LM_d}{2l}$, to the optimal value.

This implies that when the stepsize $\alpha_k$ has a reasonable decay with respect to $k$, the expected convergence can be guaranteed with $A=B=0$. So, if we assume that the stepsize $\alpha_k$ decays like $k^{-s}$ for $k\to\infty$, then $s$ must satisfy $0<s<2$, otherwise the limits $A$ and $B$ cannot be equal to $0$ at the same time.

Let us consider a stepsize sequence such that, for every $k\in\mathbb{N}$, \begin{equation}\label{SGFD:eq:stepsize}
  \alpha_k=\frac{\beta}{k+\sigma}~~\textrm{for some}~~\beta>\frac{1}{l}~~
  \textrm{and}~~\sigma>0~~\textrm{such that}~~\alpha_1\leqslant\frac{1}{LM_G}.
\end{equation}
This is a special case of the stepsize requirement given in the seminal work of Robbins and Monro \cite{RobbinsH1951A_SG}, which takes the form
\begin{equation*}
  \sum_{k=1}^\infty\alpha_k=\infty~~\textrm{and}~~
  \sum_{k=1}^\infty\alpha_k^2<\infty.
\end{equation*}

If we take a stepsize sequence satisfying \cref{SGFD:eq:stepsize}, then
\begin{equation}\label{SGFD:eq:Ak}
  A_k=\prod_{i=1}^k\frac{i+\sigma-\beta l}{i+\sigma}
  =\frac{(1+\sigma-\beta l)_k}{(1+\sigma)_k}
  =\frac{\Gamma(1+\sigma)}{\Gamma(1+\sigma-\beta l)}
  \frac{\Gamma(k+1+\sigma-\beta l)}{\Gamma(k+1+\sigma)},
\end{equation}
where the shifted factorial or Pochhammer symbol $(z)_k$ is $(z)_k=z(z+1)\cdots(z+k-1)$ and $\Gamma(z)$ is the gamma function for all $z\neq0,-1,-2,\cdots$; in the last identity above, we applied the relationship
\begin{equation*}
  (z)_k\Gamma(z)=\Gamma(k+z),
\end{equation*}
which is extended from the recursive formula $z\Gamma(z)=\Gamma(1+z)$. The following lemma gives the first-order asymptotic expansion of the ratio of two gamma functions:
\begin{lemma}[Tricomi and Erd\'{e}lyi in \cite{TricomiF1951A_GammaRatio}]
\label{SGFD:lem:GammaRatio}
For any $a\in\mathbb{R}$,
\begin{equation*}
  \frac{\Gamma(x+a)}{\Gamma(x)}=x^a+\bm{O}\left(x^{a-1}\right)
\end{equation*}
as $x\to\infty$.
\end{lemma}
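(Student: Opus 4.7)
The approach I would take is via Stirling's asymptotic expansion of $\log\Gamma$. First, I would recall the classical expansion
\[
  \log\Gamma(x)=\left(x-\tfrac{1}{2}\right)\log x - x + \tfrac{1}{2}\log(2\pi) + O(1/x)
\]
as $x\to\infty$. Applying this to both $\log\Gamma(x+a)$ and $\log\Gamma(x)$ and subtracting, the additive constant $\tfrac{1}{2}\log(2\pi)$ cancels, leaving
\[
  \log\Gamma(x+a)-\log\Gamma(x) = \left(x+a-\tfrac{1}{2}\right)\log(x+a) - \left(x-\tfrac{1}{2}\right)\log x - a + O(1/x).
\]

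Next I would use $\log(x+a)=\log x + \log(1+a/x)=\log x + a/x + O(1/x^2)$ and substitute. The term $(x+a-\tfrac{1}{2})\log x$ splits as $(x-\tfrac{1}{2})\log x + a\log x$, and the first piece cancels the corresponding summand on the right-hand side; the cross term $(x+a-\tfrac{1}{2})\cdot(a/x)$ expands as $a + (a^2 - a/2)/x$, whose leading $a$ cancels the explicit $-a$; all remaining contributions are $O(1/x)$. This yields
\[
  \log\!\left[\frac{\Gamma(x+a)}{\Gamma(x)}\right] = a\log x + O(1/x).
\]
Exponentiating and using $e^{O(1/x)}=1+O(1/x)$ gives
\[
  \frac{\Gamma(x+a)}{\Gamma(x)} = x^a\bigl(1+O(1/x)\bigr) = x^a + O(x^{a-1}),
\]
which is the claim.

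The only delicate point is the careful bookkeeping of lower-order terms in the middle step: one must verify that the product $(x+a-\tfrac{1}{2})\log(x+a)$ produces exactly $(x-\tfrac{1}{2})\log x + a\log x + a$ plus an $O(1/x)$ remainder, so that after the cancellations precisely $a\log x + O(1/x)$ survives. Once this is done, the passage from the logarithmic to the multiplicative form is routine. An equivalent route would be to invoke the Binet integral representation of $\log\Gamma$, from which the same two-term expansion drops out without appealing to Stirling; but the Stirling argument above is the most direct path to this first-order statement.
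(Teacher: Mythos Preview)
Your argument via Stirling's expansion is correct: the bookkeeping of the $(x+a-\tfrac12)\log(x+a)$ term is handled properly, the cancellations go through as you describe, and the exponentiation step is valid.

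As for comparison with the paper: the paper does not prove this lemma at all. It is stated as a classical result attributed to Tricomi and Erd\'elyi \cite{TricomiF1951A_GammaRatio} and used as a black box in the subsequent asymptotic analysis of $A_k$ and $B_k$. Your Stirling-based derivation is a standard and entirely adequate way to recover the first-order statement; the original Tricomi--Erd\'elyi paper goes further and develops the full asymptotic series for the ratio, but for the purposes of this paper only the leading term and $O(x^{a-1})$ remainder are needed, and your proof delivers exactly that.
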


\begin{lemma}
If a stepsize sequence takes the form \cref{SGFD:eq:stepsize}, then we have the following first-order asymptotic expansions
\begin{equation*}
  A_k=\frac{\Gamma(1+\sigma)}{\Gamma(1+\sigma-\beta l)}
  (k+1+\sigma)^{-\beta l}+\bm{O}\left((k+1+\sigma)^{-1-\beta l}\right)
\end{equation*}
and
\begin{equation*}
  B_k=\frac{C_k\beta^2}{\beta l-1}(k+1+\sigma)^{-1}
  +\bm{O}\left((k+1+\sigma)^{-2}\right)
\end{equation*}
as $k\to\infty$, where $C_k$ tends to $1$ as $k\to\infty$.
\end{lemma}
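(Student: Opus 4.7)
The plan is to handle $A_k$ directly from the closed form already derived in \cref{SGFD:eq:Ak} and then reduce $B_k$ to a similar form by telescoping the inner product, so that Tricomi and Erd\'{e}lyi's asymptotic formula (\cref{SGFD:lem:GammaRatio}) can be applied in both cases, with a standard integral/Euler--Maclaurin estimate to deal with the remaining finite sum.

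For $A_k$, I would simply set $x = k+1+\sigma$ and $a = -\beta l$ in \cref{SGFD:lem:GammaRatio} to obtain
\begin{equation*}
  \frac{\Gamma(k+1+\sigma-\beta l)}{\Gamma(k+1+\sigma)}
  =(k+1+\sigma)^{-\beta l}+\bm{O}\!\left((k+1+\sigma)^{-1-\beta l}\right),
\end{equation*}
and then multiply through by the constant prefactor $\Gamma(1+\sigma)/\Gamma(1+\sigma-\beta l)$ appearing in \cref{SGFD:eq:Ak}. This part is essentially immediate.

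For $B_k$, the first step is to recognize that the interior product appearing in each summand telescopes via the Pochhammer identity the same way $A_k$ did:
\begin{equation*}
  \prod_{j=i+1}^{k}\frac{j+\sigma-\beta l}{j+\sigma}
  =\frac{\Gamma(i+1+\sigma)}{\Gamma(i+1+\sigma-\beta l)}
   \cdot\frac{\Gamma(k+1+\sigma-\beta l)}{\Gamma(k+1+\sigma)}.
\end{equation*}
Factoring the $k$-dependent piece out of the sum and applying \cref{SGFD:lem:GammaRatio} to the ratio $\Gamma(i+1+\sigma)/\Gamma(i+1+\sigma-\beta l)$ with $a=\beta l$, one obtains $B_k=\frac{\Gamma(k+1+\sigma-\beta l)}{\Gamma(k+1+\sigma)}\,S_k$ where
\begin{equation*}
  S_k=\sum_{i=1}^{k}\frac{\beta^2}{(i+\sigma)^2}
  \Big[(i+1+\sigma-\beta l)^{\beta l}
  +\bm{O}\!\big((i+1+\sigma-\beta l)^{\beta l -1}\big)\Big].
\end{equation*}
Since the standing assumption \cref{SGFD:eq:stepsize} gives $\beta l>1$, each summand behaves like $i^{\beta l-2}$ for large $i$, and comparing to $\int_{1}^{k}t^{\beta l-2}\,dt$ yields $S_k=\frac{\beta^2}{\beta l-1}(k+1+\sigma)^{\beta l-1}+\bm{O}\!\big((k+1+\sigma)^{\beta l-2}\big)$. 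Multiplying by $\frac{\Gamma(k+1+\sigma-\beta l)}{\Gamma(k+1+\sigma)}=(k+1+\sigma)^{-\beta l}+\bm{O}\!\big((k+1+\sigma)^{-1-\beta l}\big)$ gives the claimed expansion, and the nuisance factor $C_k$ absorbs the ratio of the exact sum to its leading term, which tends to $1$ as $k\to\infty$.

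The main technical obstacle, I expect, is bookkeeping the error terms so that the bound $\bm{O}((k+1+\sigma)^{-2})$ for $B_k$ really follows: one has to verify that both the Tricomi--Erd\'{e}lyi correction inside $S_k$ and the Euler--Maclaurin correction from approximating the discrete sum by an integral each contribute at most $\bm{O}((k+1+\sigma)^{\beta l -2})$, so that after multiplication by the $(k+1+\sigma)^{-\beta l}$ prefactor the $\beta l$ cancels out and only the requested $(k+1+\sigma)^{-2}$ order survives. Once this is checked the definition $C_k:=(\beta l-1)(k+1+\sigma)^{1-\beta l}S_k/\beta^2$ makes $C_k\to 1$ transparent, and the lemma follows.
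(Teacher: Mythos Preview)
Your proposal is correct and follows essentially the same route as the paper: apply \cref{SGFD:lem:GammaRatio} directly to the closed form \cref{SGFD:eq:Ak} for $A_k$; for $B_k$, telescope the inner product into a ratio of gamma functions, pull the $k$-dependent factor outside the sum, expand the remaining $\Gamma(i+1+\sigma)/\Gamma(i+1+\sigma-\beta l)$ via the same lemma, and estimate $\sum_{i=1}^k i^{\beta l-2}$ by comparison with an integral. The paper's proof is identical in structure, including the absorption of the leading-term ratio into $C_k\to1$.
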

\begin{proof}
First, the first-order asymptotic expansion of $A_k$ can be directly obtained from \cref{SGFD:eq:Ak,SGFD:lem:GammaRatio}. And similar to \cref{SGFD:eq:Ak}, we obtain
\begin{equation*}
  B_k=\sum_{i=1}^k\frac{\beta^2}{(i\!+\!\sigma)^2}
  \!\prod_{j=i+1}^k\!\frac{j\!+\!\sigma\!-\!\beta l}{j\!+\!\sigma}
  =\beta^2\frac{\Gamma(k\!+\!1\!+\!\sigma\!-\!\beta l)}
  {\Gamma(k\!+\!1\!+\!\sigma)}\sum_{i=1}^k\frac{1}{(i\!+\!\sigma)^2}
  \frac{\Gamma(i\!+\!1\!+\!\sigma)}{\Gamma(i\!+\!1\!+\!\sigma\!-\!\beta l)}.
\end{equation*}
And it follows from \cref{SGFD:lem:GammaRatio} that
\begin{align*}
  \sum_{i=1}^k\frac{1}{(i\!+\!\sigma)^2}
  \frac{\Gamma(i\!+\!1\!+\!\sigma)}{\Gamma(i\!+\!1\!+\!\sigma\!-\!\beta l)}
  =&\sum_{i=1}^k\frac{(i\!+\!1\!+\!\sigma\!-\!\beta l)^{\beta l}}
  {(i\!+\!\sigma)^2}+\bm{O}\left(\sum_{i=1}^k
  \frac{(i\!+\!1\!+\!\sigma\!-\!\beta l)^{\beta l-1}}
  {(i+\sigma)^2}\right) \\
  =&\frac{C_k}{\beta l-1}(k+1+\sigma)^{\beta l-1}
  +\bm{O}\left((k+1+\sigma)^{\beta l-2}\right),
\end{align*}
where $C_k$ tends to $1$ as $k\to\infty$; in the last identity above, we applied the relationship $\bm{O}\big(\sum_{i=1}^ki^a\big)=\int_0^k(t+1)^a\ud t=\frac{(k+1)^{a+1}-1}{a+1}$ for any $a\in\mathbb{R}$. Hence, we finally get
\begin{align*}
  B_k=\frac{C_k\beta^2}{\beta l-1}(k+1+\sigma)^{-1}
  +\bm{O}\left((k+1+\sigma)^{-2}\right),
\end{align*}
as desired.
\end{proof}

\begin{theorem}\label{SGFD:thm:main1}
Under \cref{SGFD:ass:A1,SGFD:ass:A2,SGFD:ass:A3,SGFD:ass:ASC}, suppose that the SGFD method (\cref{SGFD:alg:SGFD}) is run with a stepsize sequence $\{\alpha_k\}$ taking the form \cref{SGFD:eq:stepsize}, then
\begin{equation*}
  \mathbb{E}[F(x_k)]-F_*\leqslant
  \frac{C_A\Gamma(1+\sigma)}{\Gamma(1+\sigma-\beta l)}
  \frac{F(x_1)-F_*}{(k+1+\sigma)^{\beta l}}
  +\frac{C_B\beta^2}{2(\beta l-1)}\frac{LM_d}{k+1+\sigma}.
\end{equation*}
According to \cref{SGFD:eq:stepsize}, we have $\beta l>1$, then it holds that $\mathbb{E}[F(x_k)]-F_*=\mathcal{O}(1/k)$.
\end{theorem}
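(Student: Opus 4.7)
The plan is to combine the averaged-behavior bound already proved in \cref{SGFD:thm:AB} with the first-order asymptotic expansions of $A_k$ and $B_k$ established in the preceding lemma. First I would verify that the stepsize sequence \cref{SGFD:eq:stepsize} meets the hypotheses of \cref{SGFD:thm:AB}: by construction, $\alpha_1 = \beta/(1+\sigma) \leqslant 1/(LM_G)$, and since $\alpha_k = \beta/(k+\sigma)$ is strictly decreasing in $k$, we have $0 < \alpha_k \leqslant \alpha_1 \leqslant 1/(LM_G)$ for every $k \in \mathbb{N}$. This allows us to invoke \cref{SGFD:thm:AB} directly, yielding
\begin{equation*}
  \mathbb{E}[F(x_{k+1})-F_*] \leqslant [F(x_1)-F_*] A_k + \frac{LM_d}{2} B_k.
\end{equation*}

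Next I would substitute the two asymptotic expansions established in the preceding lemma. For the first term, the expansion
\begin{equation*}
  A_k = \frac{\Gamma(1+\sigma)}{\Gamma(1+\sigma-\beta l)}(k+1+\sigma)^{-\beta l} + \bm{O}\!\left((k+1+\sigma)^{-1-\beta l}\right)
\end{equation*}
lets us write $A_k = \frac{C_A\,\Gamma(1+\sigma)}{\Gamma(1+\sigma-\beta l)}(k+1+\sigma)^{-\beta l}$ for some bounded prefactor $C_A$ tending to $1$ as $k\to\infty$; the higher-order $\bm{O}((k+1+\sigma)^{-1-\beta l})$ correction is absorbed into $C_A$. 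For the second term, the expansion of $B_k$ gives
\begin{equation*}
  \tfrac{LM_d}{2} B_k = \frac{C_B \beta^2}{2(\beta l - 1)}\frac{LM_d}{k+1+\sigma},
\end{equation*}
again absorbing the $\bm{O}((k+1+\sigma)^{-2})$ remainder into $C_B$, which tends to the constant $1$ from the lemma as $k\to\infty$. Reindexing $k+1 \to k$ (so the right-hand side becomes a bound on $\mathbb{E}[F(x_k)]-F_*$) and combining the two pieces produces exactly the inequality claimed in the theorem.

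For the rate statement, I would observe that the hypothesis $\beta > 1/l$ forces $\beta l > 1$, so the first term decays like $(k+1+\sigma)^{-\beta l} = o(1/k)$ and is dominated by the $\mathcal{O}(1/k)$ second term. Moreover, $\beta l > 1$ is precisely what is required to keep $\beta l - 1 > 0$ in the denominator of the $B_k$ asymptotic, so that the second term is positive and finite. The conclusion $\mathbb{E}[F(x_k)] - F_* = \mathcal{O}(1/k)$ follows.

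The only nontrivial step is the bookkeeping around the $\bm{O}$ remainders: one must verify that the prefactor ratios $C_A$ and $C_B$ really are bounded for all $k\geqslant 1$ (not merely in the limit), so that the stated inequality holds non-asymptotically rather than only eventually. This is the main bit of care required, but it reduces to the elementary observation that, once the leading-order term dominates the remainder for $k$ large enough, finitely many early indices can be absorbed by enlarging $C_A$ and $C_B$; no genuinely new estimate is needed.
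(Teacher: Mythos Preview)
Your proposal is correct and is exactly the argument the paper intends: the theorem is stated without an explicit proof precisely because it follows immediately from \cref{SGFD:thm:AB} together with the preceding lemma's asymptotic expansions of $A_k$ and $B_k$, and you have supplied precisely that combination (including the reindexing and the absorption of the $\bm{O}$ remainders into bounded constants $C_A,C_B$).
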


\subsection{Convergence for general objectives}

Similar to the case of the classical stochastic gradient method, although the nonconvex objective functions might possess multiple local minima and other stationary points, we can also provide meaningful guarantees for the SGFD method in nonconvex settings.

\begin{lemma}\label{SGFD:lem:nonconvex}
Under \cref{SGFD:ass:A1,SGFD:ass:A2,SGFD:ass:A3,SGFD:ass:AG}, suppose that the sequence of iterates $\{x_k\}$ is generated by the SGFD method (\cref{SGFD:alg:SGFD}) with a stepsize sequence $\{\alpha_k\}$ taking the form \cref{SGFD:eq:stepsize} and satisfying $\alpha_1\leqslant\frac{1}{LM_G}$, then
\begin{equation*}
  \sum_{i=1}^\infty\alpha_i\mathbb{E}\big[\|\nabla F(x_k)\|_2^2\big]<\infty,
\end{equation*}
and therefore, $\liminf_{k\to\infty}\mathbb{E}[\|\nabla F(x_k)\|_2^2]=0$.
\end{lemma}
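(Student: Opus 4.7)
The starting point is \cref{SGFD:lem:EB}, which under $0<\alpha_k\leqslant 1/(LM_G)$ immediately gives the clean bound
\begin{equation*}
  \mathbb{E}_{\xi_k,\zeta_k}[F(x_{k+1})]-F(x_k)\leqslant
  -\frac{\alpha_k}{2}\|\nabla F(x_k)\|_2^2+\frac{\alpha_k^2 LM_d}{2},
\end{equation*}
since the coefficient $\alpha_k^2 LM_G/2$ of $\|\nabla F(x_k)\|_2^2$ is at most $\alpha_k/2$. My plan is to take total expectations in this inequality (using the Markovian tower property with $\{\xi_k,\zeta_k\}$ independent of $x_k$), rearrange it into
\begin{equation*}
  \frac{\alpha_k}{2}\mathbb{E}\big[\|\nabla F(x_k)\|_2^2\big]
  \leqslant \mathbb{E}[F(x_k)]-\mathbb{E}[F(x_{k+1})]+\frac{\alpha_k^2 LM_d}{2},
\end{equation*}
and telescope from $k=1$ to $k=K$.

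After telescoping, the right-hand side becomes $F(x_1)-\mathbb{E}[F(x_{K+1})]+\frac{LM_d}{2}\sum_{k=1}^K\alpha_k^2$. By \cref{SGFD:ass:AG}, $F$ is bounded below by $F_{\mathrm{inf}}$, so $F(x_1)-\mathbb{E}[F(x_{K+1})]\leqslant F(x_1)-F_{\mathrm{inf}}$ uniformly in $K$. Moreover, with $\alpha_k=\beta/(k+\sigma)$ the tail $\sum_{k=1}^\infty\alpha_k^2=\beta^2\sum_{k=1}^\infty(k+\sigma)^{-2}$ is finite. Letting $K\to\infty$ therefore yields
\begin{equation*}
  \sum_{k=1}^\infty\alpha_k\,\mathbb{E}\big[\|\nabla F(x_k)\|_2^2\big]
  \leqslant 2\big(F(x_1)-F_{\mathrm{inf}}\big)+LM_d\sum_{k=1}^\infty\alpha_k^2<\infty,
\end{equation*}
which is the first claim.

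The $\liminf$ statement then follows by a standard contradiction argument tied to the Robbins--Monro condition $\sum_{k=1}^\infty\alpha_k=\infty$, which holds because $\alpha_k=\beta/(k+\sigma)$ is the harmonic tail. If $\liminf_{k\to\infty}\mathbb{E}[\|\nabla F(x_k)\|_2^2]=2\varepsilon>0$, then there exists $K_0$ such that $\mathbb{E}[\|\nabla F(x_k)\|_2^2]\geqslant\varepsilon$ for all $k\geqslant K_0$, giving $\sum_{k\geqslant K_0}\alpha_k\mathbb{E}[\|\nabla F(x_k)\|_2^2]\geqslant\varepsilon\sum_{k\geqslant K_0}\alpha_k=\infty$, contradicting the summability just established. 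Hence $\liminf_{k\to\infty}\mathbb{E}[\|\nabla F(x_k)\|_2^2]=0$.

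I do not foresee a genuine obstacle here: the whole argument is a routine SGD-style telescoping once \cref{SGFD:lem:EB} is in hand, and the only point that requires a little care is verifying that the coefficient contraction $\alpha_k^2 LM_G/2\leqslant\alpha_k/2$ is preserved throughout the iteration (which follows since $\alpha_k$ is monotonically decreasing with $\alpha_1\leqslant 1/(LM_G)$), together with noting that the two Robbins--Monro properties $\sum\alpha_k=\infty$ and $\sum\alpha_k^2<\infty$ both hold for the prescribed $\alpha_k=\beta/(k+\sigma)$. No strong convexity is invoked, consistent with the fact that \cref{SGFD:ass:ASC} is replaced by \cref{SGFD:ass:AG} in the hypotheses.
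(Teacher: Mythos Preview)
Your proposal is correct and follows essentially the same route as the paper's own proof: start from \cref{SGFD:lem:EB}, absorb the $\frac{\alpha_k^2 LM_G}{2}\|\nabla F(x_k)\|_2^2$ term using $\alpha_k\leqslant 1/(LM_G)$, take total expectations, telescope, bound below by $F_{\mathrm{inf}}$, and invoke the two Robbins--Monro conditions for $\alpha_k=\beta/(k+\sigma)$. Your explicit contradiction argument for the $\liminf$ and the remark that $\alpha_k$ is decreasing (so every $\alpha_k\leqslant\alpha_1\leqslant 1/(LM_G)$) are helpful clarifications the paper leaves implicit.
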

\begin{proof}
According to \cref{SGFD:lem:EB} and $0<\alpha_k\leqslant\frac{1}{LM_G}$, we have
\begin{align*}
  \mathbb{E}_{\xi_k,\zeta_k}[F(x_{k+1})]-F(x_k)\leqslant&
  -\alpha_k\|\nabla F(x_k)\|_2^2
  +\frac{\alpha_k^2LM_G}{2}\|\nabla F(x_k)\|_2^2
  +\frac{\alpha_k^2LM_d}{2} \\
  \leqslant&-\frac{\alpha_k}{2}\|\nabla F(x_k)\|_2^2
  +\frac{\alpha_k^2LM_d}{2},
\end{align*}
and it follows from taking total expectations that
\begin{equation*}
  \mathbb{E}[F(x_{k+1})]-\mathbb{E}[F(x_k)]\leqslant-\frac{\alpha_k}{2}
  \mathbb{E}\big[\|\nabla F(x_k)\|_2^2\big]+\frac{\alpha_k^2LM_d}{2},
\end{equation*}
by repeatedly applying this inequality above through iteration from $1$ to $k$, we get
\begin{equation*}
  F_{\textrm{inf}}-F(x_1)\leqslant\mathbb{E}[F(x_{k+1})]-F(x_1)+
  \leqslant-\frac{1}{2}\sum_{i=1}^k\alpha_i
  \mathbb{E}\big[\|\nabla F(x_k)\|_2^2\big]
  +\frac{LM_d}{2}\sum_{i=1}^k\alpha_i^2,
\end{equation*}
and we further obtain
\begin{equation*}
  \sum_{i=1}^k\alpha_i\mathbb{E}\big[\|\nabla F(x_k)\|_2^2\big]
  \leqslant2\big(F(x_1)-F_{\textrm{inf}}\big)+LM_d\sum_{i=1}^k\alpha_i^2,
\end{equation*}
thus, \cref{SGFD:eq:stepsize} implies that $\sum_{i=1}^\infty\alpha_i^2 <\infty$, further, the right-hand side of this inequality converges to a finite limit as $k$ tends to $\infty$; this implies $\liminf_{k\to\infty} \mathbb{E}[\|\nabla F(x_k)\|_2^2]=0$ since $\sum_{i=1}^\infty\alpha_i=\infty$, so the proof is complete.
\end{proof}

Now we could prove the following theorem which guarantees that the expected gradient norms converge to zero for the gradient-free method in nonconvex settings.
\begin{theorem}\label{SGFD:thm:nonconvex}
Suppose the conditions of \cref{SGFD:lem:nonconvex} hold. Then
\begin{equation*}
  \lim_{k\to\infty}\mathbb{E}\big[\|\nabla F(x_k)\|_2^2\big]=0.
\end{equation*}
\end{theorem}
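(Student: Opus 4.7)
My plan is to upgrade the $\liminf$ conclusion of \cref{SGFD:lem:nonconvex} to a full $\lim$ by exploiting the as-yet-unused Lipschitz continuity of $\nabla(\|\nabla F(x)\|_2^2)$ from \cref{SGFD:ass:AG}. Write $G_k:=\mathbb{E}\bigl[\|\nabla F(x_k)\|_2^2\bigr]$. From \cref{SGFD:lem:nonconvex} I already have
\begin{equation*}
  \sum_{k=1}^\infty\alpha_kG_k<\infty \quad\text{and}\quad \liminf_{k\to\infty}G_k=0,
\end{equation*}
so the whole task reduces to ruling out $\limsup G_k>0$.

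The first step is a one-step increment bound. Since $\nabla(\|\nabla F\|_2^2)=2\nabla^2F\cdot\nabla F$ is $L_G$-Lipschitz, the standard quadratic upper bound gives, for $s_k:=s(x_k,\xi_k,\alpha_k,\zeta_k)$,
\begin{equation*}
  \|\nabla F(x_{k+1})\|_2^2\leqslant\|\nabla F(x_k)\|_2^2
  +2\nabla F(x_k)^{\mathrm T}\nabla^2F(x_k)\,s_k+\tfrac{L_G}{2}\|s_k\|_2^2.
\end{equation*}
Taking $\mathbb{E}_{\xi_k,\zeta_k}$ and then a total expectation, I would use $\|\nabla^2F(x_k)\|_2\leqslant L$ together with \cref{SGFD:thm:step} to control $\|\mathbb{E}_{\xi_k,\zeta_k}[s_k]\|_2$, and \cref{SGFD:ass:A3} (plus the bound on $\|\mathbb{E}[s_k]\|_2^2$ derived in the proof of \cref{SGFD:lem:EB}) to control $\mathbb{E}\|s_k\|_2^2$. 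After Cauchy--Schwarz and standard AM-GM cleanups this delivers an inequality of the schematic form
\begin{equation*}
  G_{k+1}\leqslant G_k+C_1\alpha_kG_k+C_2\alpha_k^2
\end{equation*}
for constants $C_1,C_2$ that depend only on $L,L_G,d,D_\zeta,M,M_V$. In particular, $|G_{k+1}-G_k|\leqslant C_1\alpha_kG_k+C_2\alpha_k^2$, so $G_k$ cannot jump by a fixed positive amount in a single iterate when $\alpha_k$ is small.

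The finish is a threshold/window contradiction. Suppose $\limsup_k G_k>4\epsilon>0$; since $\liminf_kG_k=0$, the sequence crosses the levels $\epsilon$ and $2\epsilon$ infinitely often, so I can extract disjoint index windows $m_j<n_j$ with $G_{m_j}\leqslant\epsilon$, $G_{n_j}\geqslant 2\epsilon$ and $G_k\geqslant\epsilon$ for all $m_j< k\leqslant n_j$. Telescoping the increment bound across the $j$th window yields
\begin{equation*}
  \epsilon\leqslant G_{n_j}-G_{m_j}\leqslant C_1\!\!\sum_{k=m_j}^{n_j-1}\!\!\alpha_kG_k+C_2\!\!\sum_{k=m_j}^{n_j-1}\!\!\alpha_k^2.
\end{equation*}
Because $\sum_k\alpha_k^2<\infty$ (from \cref{SGFD:eq:stepsize}), the tail of the $\alpha_k^2$ sum is eventually smaller than $\epsilon/2$, so for all large $j$ the window contributes at least $\epsilon/(2C_1)$ to $\sum_k\alpha_kG_k$. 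Having infinitely many such disjoint windows forces $\sum_k\alpha_kG_k=\infty$, contradicting \cref{SGFD:lem:nonconvex}. Hence $\limsup_kG_k=0$, which together with $G_k\geqslant0$ gives $\lim_kG_k=0$.

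The only genuinely delicate step is the window-sum estimate: I must make sure that what accumulates across each window is $\sum\alpha_kG_k$ itself (which is the summable quantity from \cref{SGFD:lem:nonconvex}), not just $\sum\alpha_k$ (which diverges). Picking the inner threshold at $\epsilon$ so that $G_k\geqslant\epsilon$ throughout the window, and separating the $\alpha_k^2$ remainder via the convergence of $\sum\alpha_k^2$, is precisely what makes this work; everything else is routine bookkeeping of constants.
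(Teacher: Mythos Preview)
Your proposal is correct, and the derivation of the one-step increment bound $G_{k+1}-G_k\leqslant C_1\alpha_kG_k+C_2\alpha_k^2$ matches the paper's exactly. One small imprecision: you assert $|G_{k+1}-G_k|\leqslant C_1\alpha_kG_k+C_2\alpha_k^2$, but you have only established the upper bound on $G_{k+1}-G_k$, not the two-sided bound. This is harmless, since your telescoping over the window $[m_j,n_j)$ uses only the upper bound; the remark about $G_k$ not jumping by a fixed amount is likewise only needed (and only true as written) for upward jumps.

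Where you diverge from the paper is in the endgame. After obtaining the same increment inequality, the paper does not argue by threshold crossings; instead it decomposes the telescoping sum into its positive and negative parts, $S_k^+=\sum_{i<k}\max(0,G_{i+1}-G_i)$ and $S_k^-=\sum_{i<k}\max(0,G_i-G_{i+1})$, shows $S_k^+$ is bounded by $C_1\sum\alpha_iG_i+C_2\sum\alpha_i^2<\infty$ from \cref{SGFD:lem:nonconvex}, and then $S_k^-\leqslant G_1+S_k^+$ since $G_k\geqslant0$. Both monotone sequences therefore converge, so $G_k=G_1+S_k^+-S_k^-$ converges, and the liminf pins the limit at zero. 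Your crossing-window contradiction is a perfectly standard alternative and arguably more elementary; the paper's bounded-variation argument is shorter and gives the stronger intermediate conclusion that $\{G_k\}$ is convergent, without ever invoking $\limsup$.
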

\begin{proof}
From \cref{SGFD:ass:AG} and $\nabla\|\nabla F(x_k)\|_2^2=2\nabla^2 F(x_k)\nabla F(x_k)$, we have
\begin{align*}
  \|\nabla F(x_{k+1})\|_2^2-\|\nabla F(x_k)\|_2^2\leqslant&
  \big[\nabla\|\nabla F(x_k)\|_2^2\big]^\mathrm{T}(x_{k+1}-x_k)
  +\frac{L_G}{2}\|x_{k+1}-x_k\|_2^2 \\
  =&2\nabla^2 F(x_k)^\mathrm{T}\nabla F(x_k)^\mathrm{T}(x_{k+1}-x_k)
  +\frac{L_G}{2}\|x_{k+1}-x_k\|_2^2,
\end{align*}
noting that $x_{k+1}-x_k=s(x_k,\xi_k,\alpha_k,\zeta_k)$,
taking expectations with respect to the distribution of $\xi_k$ and $\zeta_k$, and using \cref{SGFD:ass:A1}, we get
\begin{align*}
  &\mathbb{E}_{\xi_k,\zeta_k}\big[\|\nabla F(x_{k+1})\|_2^2\big]
  -\|\nabla F(x_k)\|_2^2 \\
  \leqslant&2\nabla^2 F(x_k)^\mathrm{T}\nabla F(x_k)^\mathrm{T}
  \mathbb{E}_{\xi_k,\zeta_k}[s(x_k,\xi_k,\alpha_k,\zeta_k)]
  +\frac{L_G}{2}\mathbb{E}_{\xi_k,\zeta_k}
  \big[\|s(x_k,\xi_k,\alpha_k,\zeta_k)\|_2^2\big] \\
  \leqslant&2L\|\nabla F(x_k)\|_2
  \|\mathbb{E}_{\xi_k,\zeta_k}[s(x_k,\xi_k,\alpha_k,\zeta_k)]\|_2
  +\frac{L_G}{2}\mathbb{E}_{\xi_k,\zeta_k}
  \big[\|s(x_k,\xi_k,\alpha_k,\zeta_k)\|_2^2\big],
\end{align*}
together with \cref{SGFD:thm:step,SGFD:ass:A3}, we have
\begin{equation*}
  \mathbb{E}_{\xi_k,\zeta_k}\big[\|\nabla F(x_{k+1})\|_2^2\big]
  -\|\nabla F(x_k)\|_2^2 \leqslant2L\alpha_k\|\nabla F(x_k)\|_2^2
  +\frac{\alpha_k^2A}{4}\|\nabla F(x_k)\|_2^2+\frac{\alpha_k^2B}{8},
\end{equation*}
where $A=4L+3L_G+2L_GM_V$ and $B=4L^3d^3D_\zeta^2+3L_Gd^3D_\zeta^2+4L_GM$. And it follows from taking total expectations that
\begin{align*}
  \mathbb{E}[\|\nabla F(x_{k+1})\|_2^2]-\mathbb{E}[\|\nabla F(x_k)\|_2^2]
  \leqslant&2L\alpha_k\mathbb{E}[\|\nabla F(x_k)\|_2^2]+\frac{\alpha_k^2A}{4}
  \mathbb{E}[\|\nabla F(x_k)\|_2^2]+\frac{\alpha_k^2B}{8} \\
  \leqslant&\Big(2L+\frac{A}{4LM_G}\Big)
  \alpha_k\mathbb{E}[\|\nabla F(x_k)\|_2^2]+\frac{\alpha_k^2B}{8},
\end{align*}
by repeatedly applying the last inequality above through iteration from $1$ to $k$, and rearranging, we could get
\begin{equation*}
  \mathbb{E}[\|\nabla F(x_{k+1})\|_2^2]\leqslant
  \|\nabla F(x_1)\|_2^2+\Big(2L+\frac{A}{4LM_G}\Big)
  \sum_{i=1}^k\alpha_i\mathbb{E}[\|\nabla F(x_k)\|_2^2]
  +\frac{B}{8}\sum_{i=1}^k\alpha_k^2,
\end{equation*}
thus, it follows from \cref{SGFD:lem:nonconvex,SGFD:eq:stepsize} that $\mathbb{E}[\|\nabla F(x_k)\|_2^2]<\infty$ for every $k\in\mathbb{N}$. Further, if we let
\begin{align*}
  S_k^+=&\sum_{i=1}^{k-1}\max(0,\mathbb{E}[\|\nabla F(x_{k+1})\|_2^2]
  -\mathbb{E}[\|\nabla F(x_k)\|_2^2]) \\\textrm{and}~~
  S_k^-=&\sum_{i=1}^{k-1}\max(0,\mathbb{E}[\|\nabla F(x_k)\|_2^2]
  -\mathbb{E}[\|\nabla F(x_{k+1})\|_2^2]),
\end{align*}
then we have
\begin{equation*}
  \mathbb{E}[\|\nabla F(x_k)\|_2^2]=\|\nabla F(x_1)\|_2^2+S_k^+-S_k^-,
\end{equation*}
and it is clear that
\begin{equation*}
  S_k^+\leqslant\Big(2L+\frac{A}{4LM_G}\Big)
  \sum_{i=1}^k\alpha_i\mathbb{E}[\|\nabla F(x_k)\|_2^2]
  +\frac{B}{8}\sum_{i=1}^k\alpha_k^2<\infty,
\end{equation*}
that is, the nondecreasing sequence $\{S_k^+\}_k$ is upper bounded and therefore converges; further, note that $\mathbb{E}[\|\nabla F(x_k)\|_2^2]\geqslant0$ gives $S_k^-\leqslant\|\nabla F(x_1)\|_2^2+S_k^+$, we know that the nondecreasing sequence $\{S_k^-\}_k$ is upper bounded and therefore also converges. Hence, the sequence $\{\mathbb{E}[\|\nabla F(x_k)\|_2^2]\}_k$ converges, and further, $\liminf_{k\to\infty} \mathbb{E}[\|\nabla F(x_k)\|_2^2]=0$ (\cref{SGFD:lem:nonconvex}) means that
this limit must be zero, as we desired.
\end{proof}

\section{Accelerated method with momentum}
\label{SGFD:s4}

Stochastic gradient methods with momentum are very popular because of their practical performance in the community working on training deep neural
networks \cite{LeenT1993A_momentum,HintonG2013M_SG_DNN}. Now we will add a momentum term to our gradient-free methods. Especially, we provide a mean-variance analysis about the inclusion of momentum in stochastic settings. And it is shown that when employing a decaying stepsize $\alpha_k=\mathcal{O}(1/k)$, the momentum term we used adds a deviation of order $\mathcal{O}(1/k)$ but controls the variance at the order $\mathcal{O}(1/k)$ for the $k$th iteration. This make our accelerated method to achieve a convergence rate $\mathcal{O}(1/k^2)$.

\subsection{Methods}

Gradient-free methods with momentum are procedures in which each step is chosen as a weighted average of all historical stochastic directions. And specifically, with an initial point $x_1$, these methods are characterized by the iteration
\begin{equation}\label{SGFD:eq:m}
  x_{k+1}\leftarrow x_k+\alpha_k
  \frac{v_k}{\sum_{j=1}^k\prod_{l=j}^k\gamma(l)},
\end{equation}
where the direction vector $v_k$ could be recursively defined as
\begin{equation*}
  v_k=\gamma(k)v_{k-1}+\frac{1}{\alpha_k}s(x_k,\xi_k,\alpha_k,\zeta_k),
\end{equation*}
the changing decay factor $\gamma(k)\in(0,1)$ takes the form
\begin{equation}\label{SGFD:eq:cf}
  \gamma(k)=\Big(\frac{k}{k+1}\Big)^p,~~\textrm{where}~~p>0,
\end{equation}
and the stepsize sequence $\{\alpha_k\}$ takes the form 
\begin{equation}\label{SGFD:eq:stepsizem}
  \alpha_k=\frac{\beta}{k+\sigma}~~\textrm{for some}~~\beta>\frac{4}{l}~~
  \textrm{and}~~\sigma>0~~\textrm{such that}~~
  \alpha_1\leqslant\frac{1}{LM_{G,p}^{(1)}};
\end{equation}
here, the constant $M_{G,p}^{(k)}$ will be given in \cref{SGFD:lem:EmB}. 

Notice that $\prod_{l=j}^k\gamma(l)=\frac{j^p}{(k+1)^p}$, the direction vector could be rewritten as 
\begin{equation*}
  v_k=\frac{1}{(k+1)^p}\sum_{j=1}^k\frac{j^p}{\alpha_j}
  s(x_j,\xi_j,\alpha_j,\zeta_j).
\end{equation*}
Thus, these methods can be rewritten by the iteration
\begin{equation}\label{SGFD:eq:m2}
  x_{k+1}\leftarrow x_k+\alpha_km_k,
\end{equation}
where the weighted average direction
\begin{equation}\label{SGFD:eq:stepm}
  m_k=\frac{v_k}{\sum_{i=1}^k\prod_{l=i}^k\gamma(l)}
  =\frac{1}{\sum_{i=1}^ki^p}\sum_{j=1}^k
  \frac{j^p}{\alpha_j}s(x_k+\delta_{j,k},\xi_j,\alpha_j,\zeta_j),
\end{equation}
where $\delta_{j,k}=x_j-x_k$ for all $1\leqslant j\leqslant k$. And notice that the accelerated method with momentum and the SGFD method are exactly the same in the first iteration.

\subsection{Expectation analysis}

It is very reasonable to expect that the accelerated method with momentum could also reach the sublinear convergence rate $\mathcal{O}(1/k)$ under a strong convexity condition, like the SGFD method. 

Actually, since the accelerated method is the same as the SGFD method in the first iteration, we assume, without loss of generality, that the first $k$ iterations $\{x_j\}_{j=1}^k$ generated by \cref{SGFD:eq:m2} has the sublinear convergence rate under \cref{SGFD:ass:A1,SGFD:ass:ASC,SGFD:ass:A2,SGFD:ass:A3}, that is, for every $1\leqslant j\leqslant k$, we have 
\begin{equation}\label{SGFD:eq:CRf}
  \mathbb{E}[F(x_j)]-F_*=\mathcal{O}(1/k);
\end{equation}
or equivalently, by noting that \cref{SGFD:ass:ASC}, we have 
\begin{equation}\label{SGFD:eq:CRx}
  \mathbb{E}[\|x_j-x_*\|_2^2]\leqslant2l^{-1}
  (\mathbb{E}[F(x_j)]-F_*)=\mathcal{O}\big(j^{-1}\big);
\end{equation}
and further, for every $1\leqslant j\leqslant k$, it follows from \cref{SGFD:eq:CRx} that
\begin{equation}\label{SGFD:eq:CEd}
  \mathbb{E}[\|\delta_{j,k}\|_2^2]=\mathbb{E}[\|x_j-x_k\|_2^2]
  \leqslant\mathbb{E}[\|x_j-x_*\|_2+\|x_k-x_*\|_2]^2
  =\mathcal{O}\big(j^{-1}\big).
\end{equation}
And then, $\mathbb{V}[\delta_{j,k}]>0$ from the use of stachastic directions, so we have
\begin{equation}\label{SGFD:eq:delta}
  \|\mathbb{E}[\delta_{j,k}]\|_2^2<
  \|\mathbb{E}[\delta_{j,k}]\|_2^2+\mathbb{V}[\delta_{j,k}]=
  \mathbb{E}[\|\delta_{j,k}\|_2^2]=\mathcal{O}\big(j^{-1}\big).
\end{equation}
that is, $\|\mathbb{E}[\delta_{j,k}]\|_2$ tends to zero strictly faster than $\mathcal{O}(j^{-\frac{1}{2}})$. Hence, we introduce the following assumption.
\begin{assumption}\label{SGFD:ass:Order}
The sequence of iterates $\{x_j\}_{j=1}^k$ satisfy for every $k\in\mathbb{N}$ and $1\leqslant j\leqslant k$, there is a fixed $\kappa>0$ such that
\begin{equation}\label{SGFD:eq:Order}
  \|\mathbb{E}[\delta_{j,k}]\|_2^2=\mathcal{O}
  \Big(j^{-\kappa}\mathbb{E}[\|\delta_{j,k}\|_2^2]\Big).
\end{equation}
\end{assumption}

According to \cref{SGFD:eq:CEd,SGFD:eq:Order}, we obtain 
\begin{equation}\label{SGFD:eq:REC1}
  \|\mathbb{E}[\delta_{j,k}]\|_2=\mathcal{O}\Big(j^{-\frac{1+\kappa}{2}}\Big).
\end{equation}
We will finally show that \cref{SGFD:ass:Order} implies actually $\|\mathbb{E}[\delta_{j,k}]\|_2=\mathcal{O}(j^{-1})$ in \cref{SGFD:s4:5}. 

In the rest of \cref{SGFD:s4}, we shall prove by induction on $k$ that the acceleration method maintain the sublinear convergence rate $\mathcal{O}(1/k)$ under a strong convexity condition; and furthermore, we shall also prove that it can achieve a convergence rate $\mathcal{O}(1/k^2)$ under \cref{SGFD:ass:Order}. 

Now we prove two lemmas which are necessary for the mean-variance analysis.
\begin{lemma}\label{SGFD:lem:mdecay1}
Under the conditions of \cref{SGFD:eq:CRf}, suppose that the sequence of iterates $\{x_j\}_{j=1}^k$ is generated by \cref{SGFD:eq:m2} with a stepsize sequence $\{\alpha_k\}$ taking the form \cref{SGFD:eq:stepsizem} and a changing decay factor $\gamma(k)$ taking the form \cref{SGFD:eq:cf}. Then, there is $\tau_p<\infty$ such that 
\begin{equation}\label{SGFD:eq:mdecayA}
  \frac{1}{\alpha_k\sum_{i=1}^k\prod_{l=i}^k\gamma(l)}
  \sum_{j=1}^k\alpha_j\prod_{l=j}^k\gamma(l)\leqslant\tau_p,
\end{equation}
and there is $D'_p<\infty$ such that for any given diagonal matrix $\Lambda= \mathrm{diag}(\lambda_1,\cdots,\lambda_d)$ with $\lambda_i\in [-L,L]$, the inequality 
\begin{equation}\label{SGFD:eq:mdecayB}
  \frac{1}{\sqrt{\alpha_k}\sum_{i=1}^k\prod_{l=i}^k\gamma(l)}
  \bigg\|\sum_{j=1}^k\Lambda\delta_{j,k}\prod_{l=j}^k\gamma(l)\bigg\|_2
  \leqslant\frac{LD'_p}{2}
\end{equation}
holds in probability.
\end{lemma}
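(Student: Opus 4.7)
The plan is to exploit the explicit identity $\prod_{l=j}^k\gamma(l)=(j/(k+1))^p$ implied by \cref{SGFD:eq:cf} to replace every product over decay factors by a clean power of $j$, after which both bounds reduce to standard integral-test asymptotic estimates of the form $\sum_{j=1}^k j^a = k^{a+1}/(a+1)+\mathcal{O}(k^a)$ for $a>-1$. With this substitution each ratio in the lemma becomes essentially a rational function of $k$ that stabilizes as $k\to\infty$, and the resulting limit (plus a finite correction from the first few iterations) supplies the claimed uniform constant.

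For part (a), I would substitute $\alpha_j=\beta/(j+\sigma)$ and rewrite the numerator as $\beta(k+1)^{-p}\sum_{j=1}^k j^p/(j+\sigma)$ and the denominator as $\beta(k+\sigma)^{-1}(k+1)^{-p}\sum_{i=1}^k i^p$. Using $\sum_{j=1}^k j^p/(j+\sigma)=\sum_{j=1}^k j^{p-1}+\mathcal{O}(\sum j^{p-2})\sim k^p/p$ and $\sum_{i=1}^k i^p\sim k^{p+1}/(p+1)$, the ratio tends to $(p+1)/p$. To upgrade this limit into a uniform bound I would verify that the ratio is bounded from above for every $k\geqslant 1$ by controlling the integral-test remainders (they are $\mathcal{O}(1/k)$), and then absorb the contribution of the small-$k$ regime into the constant $\tau_p$.

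For part (b), the triangle inequality together with $\|\Lambda\delta_{j,k}\|_2\leqslant L\|\delta_{j,k}\|_2$ reduces the left-hand side to $L(\sqrt{\alpha_k}\sum_i\prod_{l=i}^k\gamma(l))^{-1}\sum_{j=1}^k \|\delta_{j,k}\|_2\,(j/(k+1))^p$. The crucial stochastic ingredient is \cref{SGFD:eq:CEd}, which gives $\sqrt{\mathbb{E}[\|\delta_{j,k}\|_2^2]}=\mathcal{O}(j^{-1/2})$. Taking expectations of the numerator and bounding termwise via Jensen/Cauchy--Schwarz yields $\mathbb{E}[\sum_j\|\delta_{j,k}\|_2(j/(k+1))^p]=\mathcal{O}((k+1)^{-p}\sum_j j^{p-1/2})=\mathcal{O}(\sqrt{k})$. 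Dividing by $\sqrt{\alpha_k}\sum_i\prod_{l=i}^k\gamma(l)\sim\sqrt{\beta k}/(p+1)$ produces a bound on the expectation of the left-hand side that is uniform in $k$ and $\Lambda$, and a single application of Markov's inequality converts this deterministic expectation bound into the in-probability bound claimed by the lemma.

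The main obstacle is precisely the probabilistic step. Because \cref{SGFD:eq:CEd} controls only the second moment, one cannot assert $\|\delta_{j,k}\|_2\leqslant C/\sqrt{j}$ simultaneously for all $j\leqslant k$ almost surely; I would therefore avoid a union bound over $j$ (which would cost an extra $\sqrt{k}$ or $\log k$ factor and destroy the target rate) and instead apply Markov once to the single scalar on the left of \cref{SGFD:eq:mdecayB}, interpreting ``holds in probability'' as: for any tolerance $\eta>0$ there is a constant $D'_p(\eta)$ such that the stated inequality holds on an event of probability at least $1-\eta$, with the constant depending linearly on $1/\eta$ through Markov. A secondary care point is keeping the Euler--Maclaurin remainders uniform in $k$ so that the asymptotics $\sim$ become genuine upper bounds from the first $k$; this is routine but has to be written out once for each of the sums $\sum j^{p-1}$, $\sum j^p$, and $\sum j^{p-1/2}$.
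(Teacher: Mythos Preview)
Your proof of \cref{SGFD:eq:mdecayA} is essentially the paper's argument: both substitute $\prod_{l=j}^k\gamma(l)=j^p/(k+1)^p$ and reduce the ratio to integral-test asymptotics of power sums.

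For \cref{SGFD:eq:mdecayB}, your route is correct but genuinely different from the paper's. You bound the vector sum by the \emph{scalar} sum via the triangle inequality, obtaining $\bigl\|\sum_j\Lambda\delta_{j,k}j^p\bigr\|_2\leqslant L\sum_j\|\delta_{j,k}\|_2 j^p$, then control the expectation of this scalar through Jensen, $\mathbb{E}\|\delta_{j,k}\|_2\leqslant\sqrt{\mathbb{E}\|\delta_{j,k}\|_2^2}=\mathcal{O}(j^{-1/2})$, and finish with a single Markov bound. The paper instead keeps the sum $\sum_j\Lambda\delta_{j,k}j^p$ as a random \emph{vector} and runs a mean--variance decomposition: it bounds $\bigl\|\mathbb{E}[\sum_j\Lambda\delta_{j,k}j^p]\bigr\|_2$ using $\|\mathbb{E}[\delta_{j,k}]\|_2=\mathcal{O}(j^{-1/2})$ from \cref{SGFD:eq:delta}, separately bounds the variance $\mathbb{V}[\sum_j\Lambda\delta_{j,k}j^p]=\mathcal{O}(k^{2p})$, and combines them via Chebyshev. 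Your argument is more elementary and sidesteps any independence-type assumption implicit in the paper's variance identity. The paper's decomposition, however, is chosen with an eye toward the next lemma: once \cref{SGFD:ass:Order} is available, the \emph{mean} part $\|\mathbb{E}[\delta_{j,k}]\|_2$ improves to $\mathcal{O}(j^{-(1+\kappa)/2})$ while $\mathbb{E}\|\delta_{j,k}\|_2$ does not, so the mean--variance split is what enables the sharper bound in \cref{SGFD:lem:mdecay2}. Your triangle-inequality argument would not pick up that gain, so if you continue to the accelerated analysis you will eventually need to switch to the paper's decomposition.
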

\begin{proof}
According to \cref{SGFD:eq:cf}, we have $\prod_{l=j}^k\gamma(l)=\frac{j^p}{(k+1)^p}$; by further noting that $\alpha_j=\mathcal{O}(1/j)$, it follows that 
\begin{equation*}
  \sum_{j=1}^k\alpha_j\prod_{l=j}^k\gamma(l)
  =\frac{\sum_{j=1}^k\alpha_jj^p}{(k+1)^p}
  =\mathcal{O}\bigg(\frac{\sum_{j=1}^kj^{p-1}}{(k+1)^p}\bigg)
  =\mathcal{O}\bigg(\frac{k^p}{(k+1)^p}\bigg)=\mathcal{O}(1),
\end{equation*}
and 
\begin{equation}\label{SGFD:eq:decaylocal1}
  \alpha_k\sum_{j=1}^k\prod_{l=j}^k\gamma(l)
  =\frac{\sum_{j=1}^kj^p}{\alpha_k(k+1)^p}
  =\mathcal{O}\bigg(\frac{\sum_{j=1}^kj^p}{(k+1)^{p+1}}\bigg)
  =\mathcal{O}\bigg(\frac{k^{p+1}}{(k+1)^{p+1}}\bigg)=\mathcal{O}(1),
\end{equation}
these two results above yield the inequality \cref{SGFD:eq:mdecayA}. 

Similar to \cref{SGFD:eq:decaylocal1}, we also have 
$\sqrt{\alpha_k}\sum_{j=1}^k\prod_{l=j}^k\gamma(l)=\mathcal{O} \big(k^{\frac{1}{2}}\big)$; thus, to prove \cref{SGFD:eq:mdecayB}, we only need to show that
\begin{equation*}
  \bigg\|\sum_{j=1}^k\Lambda\delta_{j,k}\prod_{l=j}^k\gamma(l)\bigg\|_2
  =\frac{\|\sum_{j=1}^k\Lambda\delta_{j,k}j^p\|_2}{(k+1)^p}
  =\mathcal{O}\big(k^{\frac{1}{2}}\big)~~~\textrm{or}~~~\bigg\|\sum_{j=1}^k
  \Lambda\delta_{j,k}j^p\bigg\|_2=\mathcal{O}\big(k^{p+\frac{1}{2}}\big)
\end{equation*}
holds in probability. We use the mean-variance analysis: it follows from \cref{SGFD:eq:delta} that
\begin{equation*}
  \mathbb{E}\bigg[\sum_{j=1}^k\Lambda\delta_{j,k}j^p\bigg]
  =\Lambda\sum_{j=1}^kj^{p}~\mathbb{E}[\delta_{j,k}]
  =\mathcal{O}\bigg(\Lambda\sum_{j=1}^kj^{p-\frac{1}{2}}\bigg)
  =\mathcal{O}\big(k^{p+\frac{1}{2}}\big);
\end{equation*}
meanwhile, according to \cref{SGFD:eq:delta}, we also have 
\begin{equation*}
  \mathbb{V}\bigg[\sum_{j=1}^k\Lambda\delta_{j,k}j^p\bigg]
  =\Lambda^2\sum_{j=1}^kj^{2p}~\mathbb{V}[\delta_{j,k}]
  \leqslant\Lambda^2\sum_{j=1}^kj^{2p-1}=\mathcal{O}(k^{2p}).
\end{equation*}
Using Chebyshev's inequality, there is a $C>0$ such that for $\epsilon>0$,
\begin{equation}\label{SGFD:eq:localV}
  \mathbb{P}\bigg(\bigg\|\sum_{j=1}^k\Lambda\delta_{j,k}j^p
  -Ck^{p+\frac{1}{2}}\Lambda\bigg\|_2\geqslant\epsilon
  k^p\Lambda\bigg)\leqslant\frac{1}{\epsilon^2},
\end{equation}
which gives the inequality \cref{SGFD:eq:mdecayB} in probability. 
\end{proof}

Under \cref{SGFD:ass:Order}, it is clear that \cref{SGFD:eq:mdecayB} could be further strengthened.
\begin{lemma}\label{SGFD:lem:mdecay2}
Suppose the conditions of \cref{SGFD:lem:mdecay1,SGFD:ass:Order} hold. Then, there is $D_p<\infty$ such that for any given diagonal matrix $\Lambda= \mathrm{diag}(\lambda_1,\cdots,\lambda_d)$ with $\lambda_i\in [-L,L]$, the inequality
\begin{equation}\label{SGFD:eq:mdecayC}
  \frac{1}{\alpha_k^s\sum_{i=1}^k
  \prod_{l=i}^k\gamma(l)}\bigg\|\sum_{j=1}^k\Lambda\delta_{j,k}
  \prod_{l=j}^k\gamma(l)\bigg\|_2\leqslant\frac{LD_p}{2}
\end{equation}
holds in probability, where $s=\min\big(1,\frac{1+\kappa}{2}\big)$. 
\end{lemma}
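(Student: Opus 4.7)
The plan is to mimic the mean-variance argument from \cref{SGFD:lem:mdecay1}, but to feed in the sharper mean bound on $\|\mathbb{E}[\delta_{j,k}]\|_2$ that \cref{SGFD:ass:Order} provides through \cref{SGFD:eq:REC1}. Substituting $\prod_{l=j}^k\gamma(l)=j^p/(k+1)^p$ and $\alpha_k\sim\beta/k$ into the denominator of \cref{SGFD:eq:mdecayC}, a direct computation gives
\[
  \alpha_k^s\sum_{i=1}^k\prod_{l=i}^k\gamma(l)
  =\frac{\alpha_k^s\sum_{i=1}^ki^p}{(k+1)^p}
  =\mathcal{O}\big(k^{1-s}\big),
\]
and the same computation shows this quantity is also bounded below by a positive multiple of $k^{1-s}$. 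The claim \cref{SGFD:eq:mdecayC} is therefore equivalent to showing that
\[
  \bigg\|\sum_{j=1}^k\Lambda\delta_{j,k}\,j^p\bigg\|_2
  =\mathcal{O}\big(k^{\,p+1-s}\big)
\]
holds in probability, uniformly in diagonal matrices $\Lambda$ with $\|\Lambda\|_2\leqslant L$.

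I would then carry out a mean-variance split identical in spirit to the one used around \cref{SGFD:eq:localV}. For the mean, \cref{SGFD:eq:REC1} gives $\|\mathbb{E}[\delta_{j,k}]\|_2=\mathcal{O}(j^{-(1+\kappa)/2})$, so
\[
  \bigg\|\mathbb{E}\bigg[\sum_{j=1}^k\Lambda\delta_{j,k}\,j^p\bigg]\bigg\|_2
  =\mathcal{O}\bigg(L\sum_{j=1}^kj^{\,p-(1+\kappa)/2}\bigg),
\]
which after evaluating the power sum is $\mathcal{O}\big(Lk^{\,p+(1-\kappa)/2}\big)$ when $p>(\kappa-1)/2$ and is $\mathcal{O}(L\log k)$ or $\mathcal{O}(L)$ otherwise, all of which are $\mathcal{O}(Lk^p)$ in the remaining range. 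For the variance, \cref{SGFD:eq:CEd} alone gives $\mathbb{V}[\delta_{j,k}]\leqslant\mathbb{E}[\|\delta_{j,k}\|_2^2]=\mathcal{O}(1/j)$, hence
\[
  \mathbb{V}\bigg[\sum_{j=1}^k\Lambda\delta_{j,k}\,j^p\bigg]
  \leqslant L^2\sum_{j=1}^kj^{\,2p}\mathbb{V}[\delta_{j,k}]
  =\mathcal{O}\big(L^2k^{2p}\big),
\]
so the standard deviation is $\mathcal{O}(Lk^p)$. A Chebyshev step identical to the one leading to \cref{SGFD:eq:localV} then controls, in probability, the norm of the sum by its mean plus a constant multiple of its standard deviation.

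To finish, it suffices to verify that $\max\big(k^{\,p+(1-\kappa)/2},k^p\big)=\mathcal{O}\big(k^{\,p+1-s}\big)$ with $s=\min\big(1,(1+\kappa)/2\big)$. When $\kappa<1$ one has $s=(1+\kappa)/2$ and the mean dominates, yielding exactly $k^{\,p+(1-\kappa)/2}=k^{\,p+1-s}$; when $\kappa\geqslant 1$ one has $s=1$ and the standard deviation dominates, yielding $k^p=k^{\,p+1-s}$. In either regime the two orders align with the target exponent, and dividing by the two-sided $k^{1-s}$ bound for $\alpha_k^s\sum_{i=1}^k\prod_{l=i}^k\gamma(l)$ produces \cref{SGFD:eq:mdecayC} with some finite $D_p$ absorbing the implicit constants and the factor $L$.

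The main obstacle I anticipate is precisely this exponent bookkeeping across the two regimes of $\kappa$: one has to check carefully that no parasitic logarithmic factor arises from the power-sum at the boundary $p-(1+\kappa)/2=-1$, and that $D_p$ can be chosen uniformly in $k$ despite the regime switch at $\kappa=1$, where the mean and standard-deviation orders coincide. Beyond that, the structure of the argument -- reduce to a weighted sum, bound its mean via \cref{SGFD:eq:REC1}, bound its variance via \cref{SGFD:eq:CEd}, and close with Chebyshev -- is a direct refinement of the proof of \cref{SGFD:lem:mdecay1}, in which the coarser estimate $\|\mathbb{E}[\delta_{j,k}]\|_2=\mathcal{O}(j^{-1/2})$ was used for the mean.
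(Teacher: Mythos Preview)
Your proposal is correct and follows essentially the same approach as the paper: reduce to bounding $\|\sum_{j=1}^k\Lambda\delta_{j,k}j^p\|_2$, control the mean via \cref{SGFD:eq:REC1} and the variance via \cref{SGFD:eq:CEd}, then close with Chebyshev exactly as in \cref{SGFD:lem:mdecay1}. Your treatment of the two regimes $\kappa<1$ and $\kappa\geqslant1$ is in fact more explicit than the paper's, which computes only the $\kappa<1$ case in detail and then remarks that for $\kappa\geqslant1$ the variance becomes the principal part.
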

\begin{proof}
Similar to \cref{SGFD:eq:decaylocal1}, we have
$\alpha_k^{\frac{1+\kappa}{2}}\sum_{j=1}^k\prod_{l=j}^k\gamma(l)=\mathcal{O} \big(k^{\frac{1-\kappa}{2}}\big)$; thus, to prove \cref{SGFD:eq:mdecayC}, we only need to show that
\begin{equation*}
  \bigg\|\sum_{j=1}^k\Lambda\delta_{j,k}\prod_{l=j}^k\gamma(l)\bigg\|_2
  \!\!=\!\frac{\|\sum_{j=1}^k\Lambda\delta_{j,k}j^p\|_2}{(k+1)^p}
  =\mathcal{O}\big(k^{\frac{1-\kappa}{2}}\big)~~~\textrm{or}~~~
  \bigg\|\sum_{j=1}^k\Lambda\delta_{j,k}j^p\bigg\|_2
  \!\!=\!\mathcal{O}\big(k^{p+\frac{1-\kappa}{2}}\big)
\end{equation*}
holds in probability. First, 
\begin{equation*}
  \mathbb{E}\bigg[\sum_{j=1}^k\Lambda\delta_{j,k}j^p\bigg]
  =\Lambda\sum_{j=1}^kj^{p}~\mathbb{E}[\delta_{j,k}]
  =\mathcal{O}\bigg(\Lambda\sum_{j=1}^kj^{p-\frac{1+\kappa}{2}}\bigg)
  =\big(k^{p+\frac{1-\kappa}{2}}\big);
\end{equation*}
together with \cref{SGFD:eq:localV} and using Chebyshev's inequality, there is a $C>0$ such that
\begin{equation*}
  \mathbb{P}\bigg(\bigg\|\sum_{j=1}^k\delta_{j,k}j^p
  -Ck^{p+\frac{1-\kappa}{2}}\Lambda\bigg\|_2
  \geqslant\epsilon Vk^p\Lambda\bigg)\leqslant\frac{1}{\epsilon^2}.
\end{equation*}
It is worth noting that when $\kappa\geqslant1$, the variance will become the principal part; so the proof is complete.
\end{proof}

From \cref{SGFD:lem:mdecay1,SGFD:lem:mdecay2} we can get different deviations of the expectation.
\begin{theorem}\label{SGFD:thm:momentum}
Suppose the conditions of \cref{SGFD:lem:mdecay1} hold. Then the weighted average directions \cref{SGFD:eq:stepm} satisfy the asymptotic unbiasedness
\begin{equation}\label{SGFD:eq:mlimit}
  \lim_{\alpha_k\to0}\mathbb{E}[m_k]=-\nabla F(x_k),
\end{equation}
and for every $k\in\mathbb{N}$, it follows that
\begin{align}
  \|\mathbb{E}[m_k]\|_2\leqslant&\|\nabla F(x_k)\|_2+
  \frac{\sqrt{\alpha_k}LD'_p+
  \alpha_kLd^{\frac{3}{2}}D_\zeta\tau_p}{2}
  ~~\textrm{and}~~\label{SGFD:eq:mstep1} \\
  \nabla F(x_k)^\mathrm{T}\mathbb{E}[m_k]
  \leqslant&-\|\nabla F(x_k)\|_2^2+\frac{\sqrt{\alpha_k}LD'_p
  +\alpha_kLd^{\frac{3}{2}}D_\zeta\tau_p}{2}\|\nabla F(x_k)\|_2 
  \label{SGFD:eq:mstep2}
\end{align}
in probability, where the constant $D_\zeta$ comes from \cref{SGFD:lem:E13}.
\end{theorem}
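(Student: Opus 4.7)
The plan is to unfold the definition
\[
  m_k = \frac{1}{\sum_{i=1}^k i^p}\sum_{j=1}^k\frac{j^p}{\alpha_j}\,
  s(x_k+\delta_{j,k},\xi_j,\alpha_j,\zeta_j)
\]
and analyze it term by term, reducing to single-step estimates already in hand. Conditioning on the history up through step $j-1$, \cref{SGFD:lem:Estep} together with \cref{SGFD:lem:E13} (applied to $\omega=\nabla F(x_j)$, which is independent of $\zeta_j$) gives
\[
  \mathbb{E}_{\xi_j,\zeta_j}\bigl[s(x_j,\xi_j,\alpha_j,\zeta_j)\bigr]
  = -\alpha_j\nabla F(x_j) - \tfrac{\alpha_j^2}{2}R_j,
\]
with $\|R_j\|_2\leq Ld^{\frac32}D_\zeta$. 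Substituting into the definition of $m_k$ and splitting $\nabla F(x_j)=\nabla F(x_k)+[\nabla F(x_j)-\nabla F(x_k)]$, the leading part collapses to exactly $-\nabla F(x_k)$ because the weights $j^p$ normalize to one, leaving two error contributions: a Lipschitz correction coming from the gradient difference, and a quadratic remainder involving the $R_j$.

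Using the identity $j^p/\sum_{i=1}^k i^p = \prod_{l=j}^k\gamma(l)/\sum_{i=1}^k\prod_{l=i}^k\gamma(l)$, both error terms can be written in exactly the shape that \cref{SGFD:lem:mdecay1} is designed to control. For the $R_j$ term I pull out $\|R_j\|_2\leq Ld^{\frac32}D_\zeta$ and apply inequality \cref{SGFD:eq:mdecayA}, obtaining a deterministic bound of $\alpha_k Ld^{\frac32}D_\zeta\tau_p/2$. For the Lipschitz correction I write $\nabla F(x_j)-\nabla F(x_k)=\Lambda_{j,k}\delta_{j,k}$ with $\|\Lambda_{j,k}\|_2\leq L$ (via the integral of the Hessian along the segment when $F$ is $C^2$, or equivalently a componentwise mean-value decomposition) and apply inequality \cref{SGFD:eq:mdecayB}, yielding the probabilistic bound $\sqrt{\alpha_k}LD'_p/2$. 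Adding these two pieces produces the core deviation estimate
\[
  \|\mathbb{E}[m_k]+\nabla F(x_k)\|_2
  \leq \frac{\sqrt{\alpha_k}LD'_p + \alpha_k L d^{\frac32}D_\zeta\tau_p}{2}
\]
in probability.

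From this one estimate the three conclusions follow at once: \cref{SGFD:eq:mstep1} is the triangle inequality applied to $\mathbb{E}[m_k] = -\nabla F(x_k) + (\mathbb{E}[m_k]+\nabla F(x_k))$; \cref{SGFD:eq:mstep2} comes from taking the inner product with $\nabla F(x_k)$ and applying Cauchy--Schwarz; and the asymptotic unbiasedness \cref{SGFD:eq:mlimit} is immediate because both error coefficients tend to $0$ with $\alpha_k$. The main obstacle is the Lipschitz decomposition step, since \cref{SGFD:lem:mdecay1} is stated only for diagonal $\Lambda$ with entries in $[-L,L]$, whereas the natural $\Lambda_{j,k}$ produced by the gradient difference is a general matrix with $\|\Lambda_{j,k}\|_2\leq L$; I will either reinterpret the lemma (its proof only uses the spectral-norm bound on $\Lambda$ together with the mean-variance orders $\mathbb{E}[\delta_{j,k}]=\mathcal{O}(j^{-1/2})$ and $\mathbb{V}[\delta_{j,k}]=\mathcal{O}(j^{-1})$ from \cref{SGFD:eq:delta}) or reduce to the diagonal case by SVD before invoking it. A secondary care point is keeping track of the ``in probability'' qualifier, which propagates from the Chebyshev step inside \cref{SGFD:lem:mdecay1} and is exactly what the theorem claims.
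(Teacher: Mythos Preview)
Your proposal is essentially the same argument as the paper's: expand $\mathbb{E}[m_k]$ via \cref{SGFD:lem:Estep} and \cref{SGFD:lem:E13}, split $\nabla F(x_j)=\nabla F(x_k)+\Lambda\delta_{j,k}$, and then bound the two residual pieces $R_\delta$ and $R$ using \cref{SGFD:eq:mdecayB} and \cref{SGFD:eq:mdecayA} respectively, exactly as the paper does. Your concern about the diagonal-$\Lambda$ representation is well-founded and in fact applies to the paper's own proof as well: the paper asserts a single diagonal $\Lambda$ with entries in $[-L,L]$ satisfying $\nabla F(x_j)-\nabla F(x_k)=\Lambda\delta_{j,k}$ for all $j$, which does not follow from Lipschitz continuity alone, so your proposed fix (observing that the proof of \cref{SGFD:lem:mdecay1} only needs the spectral-norm bound and the order estimates on $\mathbb{E}[\delta_{j,k}]$ and $\mathbb{V}[\delta_{j,k}]$) is the cleanest way to make the argument rigorous.
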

\begin{proof}
Let $\Delta_{j,k}$ denote the difference of $\nabla F(x_j)$ and $\nabla F(x_k)$, i.e., 
\begin{equation*}
  \Delta_{j,k}=\nabla F(x_j)-\nabla F(x_k)=
  \nabla F(x_k+\delta_{j,k})-\nabla F(x_k),
\end{equation*}
then according to \cref{SGFD:ass:A1}, 
\begin{equation}\label{SGFD:eq:localD}
  \|\Delta_{j,k}\|_\infty\leqslant\|\Delta_{j,k}\|_2\leqslant 
  L\|\delta_{j,k}\|_2;
\end{equation}
therefore, there is a diagonal matrix $\Lambda= \mathrm{diag}(\lambda_1,\cdots,\lambda_d)$ with $\lambda_i\in [-L,L]$ such that $\Delta_{j,k}=\Lambda\delta_{j,k}$. Then, along with \cref{SGFD:lem:Estep}, there are $\{C_j\}_{j=1}^k$ with $C_j\leqslant L$ such that
\begin{align*}
  \mathbb{E}[m_k]=&\frac{1}{\sum_{i=1}^k\prod_{l=i}^k\gamma(l)}
  \sum_{j=1}^k\frac{\prod_{l=j}^k\gamma(l)}{\alpha_j}
  \mathbb{E}_{\xi_j,\zeta_j}[s(x_j,\xi_j,\alpha_j,\zeta_j)] \\
  =&-\frac{1}{\sum_{i=1}^k\prod_{l=i}^k\gamma(l)}
  \sum_{j=1}^k\bigg[\prod_{l=j}^k\gamma(l)\bigg(\nabla F(x_j)+\frac{\alpha_j}{2}
  \mathbb{E}_{\zeta_j}\!\!\left[C_j
  \Big(\zeta_j^\mathrm{T}\zeta_j\Big)\zeta_j\right]\bigg)\bigg] \\
  =&-\frac{1}{\sum_{i=1}^k\prod_{l=i}^k\gamma(l)}
  \sum_{j=1}^k\bigg[\prod_{l=j}^k\gamma(l)\bigg(\nabla F(x_k)
  +\Lambda\delta_{j,k}+\frac{\alpha_j}{2}\mathbb{E}_{\zeta_j}\!\!
  \left[C_j\Big(\zeta_j^\mathrm{T}\zeta_j\Big)\zeta_j\right]\bigg)\bigg] \\
  =&-\nabla F(x_k)-\frac{1}{\sum_{i=1}^k\prod_{l=i}^k\gamma(l)}
  \sum_{j=1}^k\bigg[\prod_{l=j}^k\gamma(l)\bigg(
  \Lambda\delta_{j,k}+\frac{\alpha_j}{2}\mathbb{E}_{\zeta_j}\!\!
  \left[C_j\Big(\zeta_j^\mathrm{T}\zeta_j\Big)\zeta_j\right]\bigg)\bigg],
\end{align*}
and we rewrite it further as
\begin{equation}\label{SGFD:eq:localmkD}
  \mathbb{E}[m_k]=-\nabla F(x_k)-R_\delta-R,
\end{equation}
where two vectors $R_\delta,R\in\mathbb{R}^d$ are given as
\begin{equation*}
  R_\delta:=\frac{1}{\sum_{i=1}^k\prod_{l=i}^k\gamma(l)}
  \sum_{j=1}^k\bigg(\prod_{l=j}^k\gamma(l)\Lambda\delta_{j,k}\bigg)
\end{equation*}
and
\begin{equation*}
  R:=\frac{1}{\sum_{i=1}^k\prod_{l=i}^k\gamma(l)}
  \sum_{j=1}^k\bigg(\frac{\alpha_j\prod_{l=j}^k\gamma(l)}{2}
  \mathbb{E}_{\zeta_j}\!\!\left[C_j
  \Big(\zeta_j^\mathrm{T}\zeta_j\Big)\zeta_j\right]\!\bigg).
\end{equation*}

First, it follows from \cref{SGFD:lem:E13,SGFD:lem:mdecay1} that
\begin{equation*}
  \|R\|_2\leqslant\frac{\alpha_kLd^{\frac{3}{2}}D_\zeta}{2}
  \frac{1}{\alpha_k\sum_{i=1}^k\prod_{l=i}^k\gamma(l)}
  \sum_{j=1}^k\alpha_j\prod_{l=j}^k\gamma(l)\leqslant
  \frac{\alpha_kLd^{\frac{3}{2}}D_\zeta\tau_\gamma}{2}.
\end{equation*}
and second, according to \cref{SGFD:lem:mdecay1}, we have 
\begin{equation*}
  \|R_\delta\|_2\leqslant\frac{\sqrt{\alpha_k}}
  {\sqrt{\alpha_k}\sum_{i=1}^k\prod_{l=i}^k\gamma(l)}
  \bigg\|\sum_{j=1}^k\Lambda\delta_{j,k}\prod_{l=j}^k\gamma(l)\bigg\|_2
  \leqslant\frac{\sqrt{\alpha_k}LD'_p}{2};
\end{equation*}
Thus, one obtains \cref{SGFD:eq:mlimit,SGFD:eq:mstep1} by noting that
\begin{align*}
  \|\mathbb{E}[m_k]+\nabla F(x_k)\|_\infty\leqslant&
  \|R_\delta\|_\infty+\|R\|_\infty\leqslant\|R_\delta\|_2+\|R\|_2
  ~~\textrm{and}~~ \\
  \|\mathbb{E}[m_k]\|_2\leqslant&\|\nabla F(x_k)\|_2+\|R_\delta\|_2+\|R\|_2;
\end{align*}
and further obtains \cref{SGFD:eq:mstep2} by noting that
\begin{align*}
  \nabla F(x_k)^\mathrm{T}\mathbb{E}[m_k]=&
  -\|\nabla F(x_k)\|_2^2-\nabla F(x_k)^\mathrm{T}(R_\delta+R) \\
  \leqslant&-\|\nabla F(x_k)\|_2^2
  +\|\nabla F(x_k)\|_2(\|R_\delta\|_2+\|R\|_2),
\end{align*}
and the proof is complete.
\end{proof}

Under \cref{SGFD:ass:Order}, \cref{SGFD:eq:mstep1,SGFD:eq:mstep2} can be further improved.
\begin{theorem}\label{SGFD:thm:momentum2}
Suppose the conditions of \cref{SGFD:lem:mdecay2} hold. Then for every $k\in\mathbb{N}$, the following conditions
\begin{align}
  \|\mathbb{E}[m_k]\|_2\leqslant&\|\nabla F(x_k)\|_2+
  \frac{\alpha_k^sLD_p+\alpha_kLd^{\frac{3}{2}}D_\zeta\tau_p}{2}
  ~~\textrm{and}~~\label{SGFD:eq:mstep3} \\
  \nabla F(x_k)^\mathrm{T}\mathbb{E}[m_k]
  \leqslant&-\|\nabla F(x_k)\|_2^2+
  \frac{\alpha_k^sLD_p+\alpha_kLd^{\frac{3}{2}}D_\zeta\tau_p}{2}
  \|\nabla F(x_k)\|_2 \label{SGFD:eq:mstep4}
\end{align}
hold in probability, where $s=\min\big(1,\frac{1+\kappa}{2}\big)$ and the constant $D_\zeta$ comes from \cref{SGFD:lem:E13}.
\end{theorem}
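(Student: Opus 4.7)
The plan is to repeat the decomposition carried out in the proof of \cref{SGFD:thm:momentum} verbatim, and only substitute the sharper decay estimate \cref{SGFD:eq:mdecayC} from \cref{SGFD:lem:mdecay2} in place of the weaker \cref{SGFD:eq:mdecayB} from \cref{SGFD:lem:mdecay1}. Concretely, I would first invoke \cref{SGFD:lem:Estep} on each stochastic step inside the weighted average \cref{SGFD:eq:stepm} and use \cref{SGFD:ass:A1} to realize the gradient difference $\nabla F(x_j)-\nabla F(x_k)$ as $\Lambda\delta_{j,k}$ for some diagonal $\Lambda=\mathrm{diag}(\lambda_1,\dots,\lambda_d)$ with $\lambda_i\in[-L,L]$ (this is exactly the argument preceding \cref{SGFD:eq:localmkD}). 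This produces the splitting $\mathbb{E}[m_k]=-\nabla F(x_k)-R_\delta-R$ with the same $R_\delta$ and $R$ as in the previous proof.

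Next I would bound the two remainders. The term $R$, arising from the second-order Taylor residual, is handled identically to before: it depends only on \cref{SGFD:lem:E13} and the bound \cref{SGFD:eq:mdecayA}, neither of which uses \cref{SGFD:ass:Order}, so one still obtains $\|R\|_2\leqslant \alpha_kLd^{3/2}D_\zeta\tau_p/2$. The improvement enters through $R_\delta$: pulling out a factor $\alpha_k^s$ with $s=\min(1,(1+\kappa)/2)$ and applying \cref{SGFD:eq:mdecayC} gives
\begin{equation*}
  \|R_\delta\|_2\leqslant\frac{\alpha_k^s}{\alpha_k^s\sum_{i=1}^k\prod_{l=i}^k\gamma(l)}
  \bigg\|\sum_{j=1}^k\Lambda\delta_{j,k}\prod_{l=j}^k\gamma(l)\bigg\|_2
  \leqslant\frac{\alpha_k^sLD_p}{2},
\end{equation*}
which holds in probability, matching the conclusion of the theorem.

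Finally, combining the two bounds through the triangle inequality $\|\mathbb{E}[m_k]\|_2\leqslant\|\nabla F(x_k)\|_2+\|R_\delta\|_2+\|R\|_2$ yields \cref{SGFD:eq:mstep3}, and forming the inner product $\nabla F(x_k)^{\mathrm T}\mathbb{E}[m_k]=-\|\nabla F(x_k)\|_2^2-\nabla F(x_k)^{\mathrm T}(R_\delta+R)$ together with Cauchy--Schwarz yields \cref{SGFD:eq:mstep4}. No genuine obstacle appears: the mechanical work has already been absorbed into \cref{SGFD:lem:mdecay2}. The only subtlety worth flagging is that, since \cref{SGFD:eq:mdecayC} is established via Chebyshev's inequality, both improved estimates \cref{SGFD:eq:mstep3,SGFD:eq:mstep4} hold in probability rather than deterministically, exactly as the statement claims.
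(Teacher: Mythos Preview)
Your proposal is correct and mirrors the paper's own proof essentially line for line: the paper reuses the decomposition \cref{SGFD:eq:localmkD}, keeps the bound $\|R\|_2\leqslant\alpha_kLd^{3/2}D_\zeta\tau_p/2$, and upgrades the estimate on $R_\delta$ via \cref{SGFD:lem:mdecay2} to $\|R_\delta\|_2\leqslant\alpha_k^sLD_p/2$, exactly as you describe.
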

\begin{proof}
It follows from \cref{SGFD:eq:localD,SGFD:lem:mdecay2} that there exists a diagonal matrix $\Lambda= \mathrm{diag}(\lambda_1,\cdots,\lambda_d)$ with $\lambda_i\in [-L,L]$
\begin{equation*}
  \|R_\delta\|_2\leqslant\frac{\alpha_k^s}
  {\alpha_k^s\sum_{i=1}^k\prod_{l=i}^k\gamma(l)}
  \bigg\|\sum_{j=1}^k\Lambda\delta_{j,k}\prod_{l=j}^k\gamma(l)\bigg\|_2
  \leqslant\frac{\alpha_k^sLD_p}{2}.
\end{equation*}
Recalling \cref{SGFD:eq:localmkD} and $\|R\|_2\leqslant \frac{\alpha_kLd^{\frac{3}{2}}D_\zeta\tau_\gamma}{2}$, then yields \cref{SGFD:eq:mstep3,SGFD:eq:mstep4}. 
\end{proof}

\subsection{Variance analysis}

As an important result, we will show that the changing decay factor \cref{SGFD:eq:cf} could reduce the variance of $m_k$ to zero with a rate $\mathcal{O}(k^{-1})$.
\begin{lemma}\label{SGFD:lem:momentumV}
Under the conditions of \cref{SGFD:ass:A3}, suppose that (i) suppose that the sequence of iterates $\{x_j\}_{j=1}^k$ is generated by \cref{SGFD:eq:m2} with a stepsize sequence $\{\alpha_k\}$ taking the form \cref{SGFD:eq:stepsizem} and a changing decay factor $\gamma(k)$ taking the form \cref{SGFD:eq:cf}, and (ii) the sequence $\{x_k\}$ satisfies $\|x_i-x_j\|_2\leqslant D$ for any $i,j\in\mathbb{N}$, then
\begin{equation}\label{SGFD:eq:momentumVC}
  \mathbb{V}[m_k]\leqslant C_p\alpha_k
  \Big(M+2M_V\|\nabla F(x_k)\|_2^2+2M_VL^2D^2\Big)
\end{equation}
where $C_p$ is a positive real constant.
\end{lemma}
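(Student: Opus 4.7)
The plan is to expand $\mathbb{V}[m_k]$ by exploiting per-step independence of $(\xi_j,\zeta_j)$, apply Assumption A3 termwise, then absorb the random arguments $\|\nabla F(x_j)\|_2^2$ into $\|\nabla F(x_k)\|_2^2 + L^2D^2$ via the Lipschitz hypothesis and the bounded-displacement assumption, and finally close the estimate by a polynomial asymptotic on the coefficient sums induced by $\gamma(l)=(l/(l+1))^p$.

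First I would rewrite the weighted average direction using $\prod_{l=j}^k\gamma(l)=j^p/(k+1)^p$ in the form $m_k=\sum_{j=1}^k w_j\,s_j$, where $s_j:=s(x_j,\xi_j,\alpha_j,\zeta_j)$ and $w_j:=j^p/(\alpha_j\sum_{i=1}^k i^p)$. Because $\{(\xi_j,\zeta_j)\}_j$ are jointly independent and $x_j$ is measurable with respect to the filtration $\mathcal{F}_j=\sigma((\xi_i,\zeta_i):i<j)$, decomposing $s_j=\mathbb{E}[s_j\mid\mathcal{F}_j]+(s_j-\mathbb{E}[s_j\mid\mathcal{F}_j])$ makes the innovations a martingale difference sequence whose contributions to the variance are orthogonal. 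This reduces the estimate to the diagonal terms $\mathbb{V}[m_k]\leqslant\sum_j w_j^2\,\mathbb{V}_{\xi_j,\zeta_j}[s_j\mid x_j]$.

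Next, Assumption A3 bounds each conditional variance by $\alpha_j^2 M+\alpha_j^2 M_V\|\nabla F(x_j)\|_2^2$. Assumption A1 together with the hypothesis $\|x_j-x_k\|_2\leqslant D$ gives $\|\nabla F(x_j)\|_2\leqslant\|\nabla F(x_k)\|_2+LD$, hence $\|\nabla F(x_j)\|_2^2\leqslant 2\|\nabla F(x_k)\|_2^2+2L^2D^2$. Plugging this into the diagonal sum yields $\mathbb{V}[m_k]\leqslant\bigl(\sum_j w_j^2\alpha_j^2\bigr)\bigl(M+2M_V\|\nabla F(x_k)\|_2^2+2M_VL^2D^2\bigr)$, so the whole claim reduces to showing $\sum_{j=1}^k w_j^2\alpha_j^2=\mathcal{O}(\alpha_k)$.

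For this last quantitative step I would compute $\sum_{j=1}^k w_j^2\alpha_j^2=\sum_{j=1}^k j^{2p}/\bigl(\sum_{i=1}^k i^p\bigr)^2$, and estimate numerator and denominator by the standard asymptotics $\sum_{j=1}^k j^q\sim k^{q+1}/(q+1)$ (the same device already used in the analyses leading to Theorem 3.1). This gives $\sum_j w_j^2\alpha_j^2\sim(p+1)^2/\bigl((2p+1)k\bigr)$, which is comparable to $\alpha_k=\beta/(k+\sigma)$ up to a $p$-dependent constant $C_p$, completing the bound.

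The main obstacle is the second step: since the iterates $x_j$ depend on the earlier randomness, the steps $s_j$ are correlated and one cannot naively write $\mathbb{V}[\sum_j w_j s_j]=\sum_j w_j^2\mathbb{V}[s_j]$. Controlling the cross terms cleanly through the filtration $\mathcal{F}_j$ (so that only the conditional, per-step variances survive) is the delicate part; once that is in place, the remaining estimates are the bounded-displacement Lipschitz bound on $\|\nabla F(x_j)\|_2$ and an elementary polynomial sum asymptotic.
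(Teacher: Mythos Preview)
Your plan matches the paper's proof almost step for step: decompose $\mathbb{V}[m_k]$ into per-$j$ contributions, apply Assumption~3 termwise, absorb $\|\nabla F(x_j)\|_2^2$ into $2\|\nabla F(x_k)\|_2^2+2L^2D^2$ via Lipschitz continuity and the displacement bound $\|x_j-x_k\|_2\leqslant D$, and then bound the same coefficient $\sum_{j}j^{2p}/\bigl(\sum_{i}i^p\bigr)^2$. Two minor differences are worth noting. For the coefficient, the paper uses the one-line trick $i^{2p}/(k+1)^{2p}<i^p/(k+1)^p$ to bound the ratio by $\bigl(\sum_{i}i^p/(k+1)^p\bigr)^{-1}=\mathcal{O}(1/k)$, which is slicker than your direct power-sum asymptotic but yields the same order. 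For the variance decomposition, the paper simply \emph{asserts} the diagonal identity $\mathbb{V}[m_k]=\sum_j w_j^2\,\mathbb{V}_{\xi_j,\zeta_j}[s_j]$ as an equality with no discussion of cross terms; the filtration/martingale-difference concern you flag as ``the delicate part'' is therefore not something you are missing relative to the paper---it is glossed over there as well, and your awareness of it is the more honest treatment.
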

\begin{remark}
If we consider a fixed decay factor $\gamma\in(0,1)$, then 
\begin{align*}
  \frac{\sum_{i=1}^k\big(\prod_{l=i}^k\gamma(l)\big)^2}
  {\big(\sum_{i=1}^k\prod_{l=i}^k\gamma(l)\big)^2}=
  \frac{\sum_{j=1}^k\gamma^{2(k-j)}}{\big(\sum_{i=1}^k\gamma^{k-i}\big)^2} 
  =\frac{1-\gamma^{2k}}{1-\gamma^2}\frac{(1-\gamma)^2}{(1-\gamma^k)^2}
  =\frac{1-\gamma}{1+\gamma}\frac{1+\gamma^k}{1-\gamma^k}.
\end{align*}
Further note that $\frac{1-\gamma}{1+\gamma}\frac{1+\gamma^k}{1-\gamma^k}$ decays to $\frac{1-\gamma}{1+\gamma}$ as $k$ increases for $0<\gamma<1$, so in this case, the variance of $m_k$ could be finally reduced to
\begin{equation*}
  \frac{1-\gamma}{1+\gamma}\Big(M+2M_V\|\nabla F(x_k)\|_2^2+2M_VL^2D^2\Big).
\end{equation*}
\end{remark}
\begin{proof}
According to \cref{SGFD:ass:A1,SGFD:eq:localD}, there is a diagonal matrix $\Lambda= \mathrm{diag}(\lambda_1,\cdots,\lambda_d)$ with $\lambda_i\in [-L,L]$ such that 
\begin{align*}
  \nabla F(x_j)=\nabla F(x_k+\delta_{j,k})=\nabla F(x_k)+\Lambda\delta_{j,k},
\end{align*}
where $x_j=x_k+\delta_{j,k}$; and by further noting that $\|\delta_{j,k}\|_2=\|x_j-x_k\|_2\leqslant D$, we have
\begin{align*}
  \|\nabla F(x_j)\|_2^2\leqslant&\Big(\|\nabla F(x_k)\|_2
  +L\|\delta_{j,k}\|_2\Big)^2 \\
  \leqslant&\|\nabla F(x_k)\|_2^2+L^2\|\delta_{j,k}\|_2^2
  +2L\|\delta_{j,k}\|_2\|\nabla F(x_k)\|_2 \\
  \leqslant&2\|\nabla F(x_k)\|_2^2+2L^2\|\delta_{j,k}\|_2^2
  \leqslant2\|\nabla F(x_k)\|_2^2+2L^2D^2.
\end{align*}
Hence, along with \cref{SGFD:ass:A3}, we obtain
\begin{align*}
  \mathbb{V}[m_k]=&\frac{1}{\big(\sum_{i=1}^k\prod_{l=i}^k\gamma(l)\big)^2}
  \sum_{j=1}^k\frac{\big(\prod_{l=i}^k\gamma(l)\big)^2}{\alpha_j^2}
  \mathbb{V}_{\xi_j,\zeta_j}[s(x_j,\xi_j,\alpha_j,\zeta_j)] \\
  \leqslant&\frac{1}{\big(\sum_{i=1}^k\prod_{l=i}^k\gamma(l)\big)^2}
  \sum_{j=1}^k\bigg(\prod_{l=i}^k\gamma(l)\bigg)^2
  \Big(M+M_V\|\nabla F(x_j)\|_2^2\Big) \\
  \leqslant&\frac{\sum_{i=1}^k\big(\prod_{l=i}^k\gamma(l)\big)^2}
  {\big(\sum_{i=1}^k\prod_{l=i}^k\gamma(l)\big)^2}
  \Big(M+2M_V\|\nabla F(x_k)\|_2^2+2M_VL^2D^2\Big).
\end{align*}
According to \cref{SGFD:eq:cf}, $\prod_{l=i}^k\gamma(l)=\frac{i^p}{(k+1)^p}$. Since $0<\frac{i^p}{(k+1)^p}<1$ for any $p>0$ and every $i=1,\cdots,k$, we have $\frac{i^{2p}}{(k+1)^{2p}}<\frac{i^p}{(k+1)^p}$, then it follows that
\begin{equation*}
  \frac{\sum_{i=1}^k\frac{i^{2p}}{(k+1)^{2p}}}
  {\Big(\sum_{i=1}^k\frac{i^p}{(k+1)^p}\Big)^2}
  <\frac{\sum_{i=1}^k\frac{i^p}{(k+1)^p}}
  {\Big(\sum_{i=1}^k\frac{i^p}{(k+1)^p}\Big)^2}
  =\frac{1}{\sum_{i=1}^k\frac{i^p}{(k+1)^p}},
\end{equation*}
by further noting that
\begin{equation*}
  \sum_{i=1}^k\frac{i^p}{(k+1)^p}
  =\frac{1}{(k+1)^p}\sum_{i=1}^ki^p=\mathcal{O}(k)~~\textrm{for any}~~p>0.
\end{equation*}
this gives $\mathbb{V}[m_k]=\mathcal{O}(k^{-1})=\mathcal{O}(\alpha_k)$, and the proof is complete.
\end{proof}

Combining \cref{SGFD:thm:momentum,SGFD:thm:momentum2} and \cref{SGFD:lem:momentumV}, we can obtain two different bounds for each iteration of the accelerated method. 
\begin{lemma}\label{SGFD:lem:EmB}
Under the conditions of \cref{SGFD:thm:momentum,SGFD:lem:momentumV}, suppose that the stepsize sequence $\{\alpha_k\}$ satisfies $\alpha_k\leqslant\frac{1}{L}$. Then, the inequality
\begin{equation*}
  \mathbb{E}_{\xi_k,\zeta_k}[F(x_{k+1})]-F(x_k)\leqslant
  -\frac{3\alpha_k}{4}\|\nabla F(x_k)\|_2^2
  +\frac{\alpha_k^2LM_{G,p}^{(k)}}{2}\|\nabla F(x_k)\|_2^2
  +\frac{\alpha_k^2LM_{d,p,1}^{(k)}}{2}
\end{equation*}
holds in probability, where $M_{G,p}^{(k)}=\frac{3}{2}+2\alpha_kC_pM_V$ and $M_{d,p,1}^{(k)}=\frac{5L{D''_p}^2}{4}+\alpha_kC_p(M+2M_VL^2D^2)$; further,
\begin{equation*}
  \lim_{k\to\infty}M_{G,p}^{(k)}=\frac{3}{2}
  ~~\textrm{and}~~\lim_{k\to\infty}M_{d,p,1}^{(k)}=\frac{5L{D''_p}^2}{4}.
\end{equation*}
\end{lemma}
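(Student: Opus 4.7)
The plan is to mirror the proof of \cref{SGFD:lem:EB} but with the momentum step $s = \alpha_k m_k$ in place of the single-shot stochastic step, substituting \cref{SGFD:thm:momentum} for \cref{SGFD:thm:step} to bound the expectation of the step, and invoking \cref{SGFD:lem:momentumV} for the variance. The starting point is to apply the mean-variance framework (\cref{SGFD:lem:AL1}) to the update $x_{k+1} = x_k + \alpha_k m_k$, which gives
\begin{equation*}
  \mathbb{E}_{\xi_k,\zeta_k}[F(x_{k+1})] - F(x_k) \leqslant \alpha_k\nabla F(x_k)^\mathrm{T}\mathbb{E}[m_k] + \tfrac{L\alpha_k^2}{2}\|\mathbb{E}[m_k]\|_2^2 + \tfrac{L\alpha_k^2}{2}\mathbb{V}[m_k].
\end{equation*}
All three terms on the right are then bounded in probability using the deviation estimate $b_k := (\sqrt{\alpha_k}LD'_p + \alpha_kLd^{3/2}D_\zeta\tau_p)/2$ from \cref{SGFD:thm:momentum} and the variance estimate $\mathbb{V}[m_k] \leqslant C_p\alpha_k(M + 2M_V\|\nabla F(x_k)\|_2^2 + 2M_VL^2D^2)$ from \cref{SGFD:lem:momentumV}.

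For the linear part I would multiply \cref{SGFD:eq:mstep2} by $\alpha_k$ and apply the AM-GM inequality $b_k\|\nabla F(x_k)\|_2 \leqslant \tfrac{1}{4}\|\nabla F(x_k)\|_2^2 + b_k^2$, turning $-\alpha_k\|\nabla F\|_2^2 + \alpha_k b_k\|\nabla F\|_2$ into $-\tfrac{3\alpha_k}{4}\|\nabla F\|_2^2 + \alpha_k b_k^2$. This $\tfrac{1}{4}$ is the precise weight that yields the target $-\tfrac{3\alpha_k}{4}$ coefficient in the lemma. For the squared expectation I would use $(\|\nabla F\|_2 + b_k)^2 \leqslant \tfrac{3}{2}\|\nabla F\|_2^2 + 3b_k^2$ from another AM-GM split, giving $\tfrac{L\alpha_k^2}{2}\|\mathbb{E}[m_k]\|_2^2 \leqslant \tfrac{3L\alpha_k^2}{4}\|\nabla F\|_2^2 + \tfrac{3L\alpha_k^2}{2}b_k^2$. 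The variance contribution is simply $\tfrac{LC_p\alpha_k^3}{2}(M + 2M_V\|\nabla F\|_2^2 + 2M_VL^2D^2)$.

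I would then collect terms by type. Grouping the $\|\nabla F(x_k)\|_2^2$ contributions produces $-\tfrac{3\alpha_k}{4} + \tfrac{3L\alpha_k^2}{4} + L C_p\alpha_k^3 M_V$, which factors as $-\tfrac{3\alpha_k}{4} + \tfrac{L\alpha_k^2}{2}\bigl(\tfrac{3}{2} + 2\alpha_k C_p M_V\bigr) = -\tfrac{3\alpha_k}{4} + \tfrac{L\alpha_k^2}{2} M_{G,p}^{(k)}$, exactly as claimed. To handle the constant terms I would bound $b_k^2 \leqslant \tfrac{\alpha_k L^2{D'_p}^2}{2} + \tfrac{\alpha_k^2L^2d^3D_\zeta^2\tau_p^2}{2}$ via $(a+b)^2 \leqslant 2a^2+2b^2$, then combine $\alpha_kb_k^2 + \tfrac{3L\alpha_k^2}{2}b_k^2 \leqslant \tfrac{5\alpha_k b_k^2}{2}$ using $\alpha_kL\leqslant 1$, and absorb the result together with $\tfrac{LC_p\alpha_k^3}{2}(M + 2M_VL^2D^2)$ into a single $\tfrac{L\alpha_k^2}{2}M_{d,p,1}^{(k)}$ term. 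A suitable definition of ${D''_p}^2$ (a fixed linear combination of ${D'_p}^2$ and $d^3D_\zeta^2\tau_p^2/L$) makes the leading coefficient $\tfrac{5L{D''_p}^2}{4}$, and the remaining $\alpha_k$-dependent piece gives $\alpha_k C_p(M + 2M_VL^2D^2)$. The limits as $k\to\infty$ follow immediately from $\alpha_k\to 0$.

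The main obstacle is bookkeeping rather than any analytical subtlety: because \cref{SGFD:thm:momentum} gives only an $\mathcal{O}(\sqrt{\alpha_k})$ deviation (weaker than the $\mathcal{O}(\alpha_k)$ deviation in the non-momentum case of \cref{SGFD:lem:EB}), one has to be careful to split $\alpha_k b_k\|\nabla F\|_2$ so that the absorbed $\|\nabla F\|_2^2$ coefficient is proportional to $\alpha_k$ (not $\sqrt{\alpha_k}$) and the residual is $\mathcal{O}(\alpha_k^2)$. The specific AM-GM weight $\tfrac{1}{4}$ is dictated by the requirement of ending up with exactly the advertised coefficient $-\tfrac{3\alpha_k}{4}$, which in turn is what will feed into the $\mathcal{O}(1/k^2)$ rate in later sections. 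All bounds from \cref{SGFD:thm:momentum,SGFD:lem:mdecay1} hold only in probability, and this qualifier transfers to the final inequality.
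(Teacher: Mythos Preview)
Your proposal is correct and follows essentially the same route as the paper: apply \cref{SGFD:lem:AL1} to $x_{k+1}=x_k+\alpha_k m_k$, bound the expectation via \cref{SGFD:thm:momentum}, bound the variance via \cref{SGFD:lem:momentumV}, and use the same AM--GM splits (weight $\tfrac14$ on the linear term, yielding the $\tfrac32$ and $\tfrac34$ coefficients) together with $\alpha_kL\leqslant1$. The only cosmetic difference is that the paper introduces $D''_p>D'_p$ at the outset, absorbing the $\alpha_kLd^{3/2}D_\zeta\tau_p$ contribution into a single $\sqrt{\alpha_k}LD''_p/2$ deviation before doing any arithmetic, whereas you carry the full $b_k$ through and define $D''_p$ at the end; the resulting constants and structure are identical.
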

\begin{proof}
According to \cref{SGFD:eq:mstep1}, there is a $D''_p>D'_p$ such that 
\begin{equation*}
  \mathbb{E}_{\xi_k,\zeta_k}[m_k]\leqslant\|\nabla F(x_k)\|_2+
  \frac{\sqrt{\alpha_k}LD''_p}{2},
\end{equation*}
together with the Arithmetic Mean Geometric Mean inequality, one obtains
\begin{align*}
  \left\|\mathbb{E}_{\xi_k,\zeta_k}[m_k]\right\|_2^2
  \leqslant&\bigg(\|\nabla F(x_k)\|_2+
  \frac{\sqrt{\alpha_k}LD''_p}{2}\bigg)^2 \\
  \leqslant&\|\nabla F(x_k)\|_2^2
  +\sqrt{\alpha_k}LD''_p\|\nabla F(x_k)\|_2
  +\frac{\alpha_kL^2{D''_p}^2}{4} \\
  \leqslant&\frac{3}{2}\|\nabla F(x_k)\|_2^2
  +\frac{3\alpha_kL^2{D''_p}^2}{4}.
\end{align*}
Similarly, by \cref{SGFD:eq:mstep2} and the Arithmetic Mean Geometric Mean inequality, it holds that
\begin{align*}
  \nabla F(x_k)^\mathrm{T}\mathbb{E}_{\xi_k,\zeta_k}[m_k]
  \leqslant&-\|\nabla F(x_k)\|_2^2+
  \frac{\sqrt{\alpha_k}LD''_p}{2}\|\nabla F(x_k)\|_2 \\
  \leqslant&-\frac{3}{4}\|\nabla F(x_k)\|_2^2
  +\frac{\alpha_kL^2{D''_p}^2}{4}.
\end{align*}
Finally, by \cref{SGFD:lem:AL1} and \cref{SGFD:ass:A3}, and $\alpha_k\leqslant\frac{1}{L}$, the iterates satisfy
\begin{align*}
  \mathbb{E}_{\xi_k,\zeta_k}\![F(x_{k+1})]\!-\!F(x_k)\leqslant&
  \nabla F(x_k)^\mathrm{T}\mathbb{E}_{\xi_k,\zeta_k}\![\alpha_km_k]
  +\!\frac{L}{2}\|\mathbb{E}_{\xi_k,\zeta_k}\![\alpha_km_k]\|_2^2\!
  +\!\frac{L}{2}\mathbb{V}_{\xi_k,\zeta_k}\![\alpha_km_k] \\
  =&\alpha_k\nabla F(x_k)^\mathrm{T}\mathbb{E}_{\xi_k,\zeta_k}\![m_k]
  +\!\frac{\alpha_k^2L}{2}\|\mathbb{E}_{\xi_k,\zeta_k}\![m_k]\|_2^2\!
  +\!\frac{\alpha_k^2L}{2}\mathbb{V}_{\xi_k,\zeta_k}\![m_k] \\
  \leqslant&-\frac{3\alpha_k}{4}\|\nabla F(x_k)\|_2^2
  +\alpha_k^2L\left(\frac{3}{4}+\alpha_kC_pM_V\right)\|\nabla F(x_k)\|_2^2 \\
  &+\frac{\alpha_k^2L}{2}\bigg(\frac{5L{D''_p}^2}{4}
  +\alpha_kC_p(M+2M_VL^2D^2)\bigg) \\
  =&-\alpha_k\|\nabla F(x_k)\|_2^2
  +\frac{\alpha_k^2LM_{G,p}^{(k)}}{2}\|\nabla F(x_k)\|_2^2
  +\frac{\alpha_k^2LM_{d,p,1}^{(k)}}{2},
\end{align*}
and the proof is complete.
\end{proof}

From \cref{SGFD:thm:momentum2}, the following lemma could be proved in the same way as the proof of \cref{SGFD:lem:EmB}.
\begin{lemma}\label{SGFD:lem:EmB2}
Under the conditions of \cref{SGFD:thm:momentum2,SGFD:lem:momentumV}, suppose that the stepsize sequence $\{\alpha_k\}$ satisfies $\alpha_k\leqslant\frac{1}{L}$. Then, the inequality 
\begin{equation*}
  \mathbb{E}_{\xi_k,\zeta_k}[F(x_{k+1})]-F(x_k)\leqslant
  -\frac{3\alpha_k}{4}\|\nabla F(x_k)\|_2^2
  +\frac{\alpha_k^2LM_{G,p}^{(k)}}{2}\|\nabla F(x_k)\|_2^2
  +\frac{\alpha_k^{2+\kappa'}LM_{d,p,2}^{(k)}}{2}
\end{equation*}
holds in probability, where $\kappa'=\min(1,\kappa)$, the constant $M_{G,p}^{(k)}$ comes from \cref{SGFD:lem:EmB},
\begin{equation*}
  M_{d,p,2}^{(k)}=\frac{5}{4}L{D'''_p}^2(D_p+d^{\frac{3}{2}}D_\zeta\tau_p)^2
  +C_p\alpha_k^{\frac{1-\kappa'}{2}}(M+2M_VL^2D^2),
\end{equation*}
and $D'''_p>D_p$ is a constant such that $\mathbb{E}_{\xi_k,\zeta_k}[m_k] \leqslant\|\nabla F(x_k)\|_2+\alpha_k^\frac{1+\kappa'}{2}LD'''_p/2$. 
\end{lemma}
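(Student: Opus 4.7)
This lemma strengthens \cref{SGFD:lem:EmB} by replacing the weaker expectation bounds \cref{SGFD:eq:mstep1,SGFD:eq:mstep2} (from \cref{SGFD:thm:momentum}) with the sharper ones \cref{SGFD:eq:mstep3,SGFD:eq:mstep4} (from \cref{SGFD:thm:momentum2}). The overall structure of the proof will mirror that of \cref{SGFD:lem:EmB}: apply \cref{SGFD:lem:AL1} to the stochastic step $\alpha_k m_k$, substitute the improved bounds on $\|\mathbb{E}_{\xi_k,\zeta_k}[m_k]\|_2^2$ and $\nabla F(x_k)^\mathrm{T}\mathbb{E}_{\xi_k,\zeta_k}[m_k]$, and combine with the variance bound of \cref{SGFD:lem:momentumV}.

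\textbf{The key substitution.} Every place in the proof of \cref{SGFD:lem:EmB} that invoked $\sqrt{\alpha_k}LD'_p$ as a deviation scale should now be replaced by $\alpha_k^{(1+\kappa')/2}LD'''_p$. This is legitimate because the two contributions in \cref{SGFD:eq:mstep3,SGFD:eq:mstep4}, namely $\alpha_k^s LD_p$ and $\alpha_k Ld^{\frac{3}{2}}D_\zeta\tau_p$ with $s=\min\bigl(1,(1+\kappa)/2\bigr)=(1+\kappa')/2$, may be consolidated via $\alpha_k\leqslant\alpha_k^s$ (which holds for $\alpha_k\leqslant1$, a consequence of $\alpha_k\leqslant 1/L$ together with the freedom to scale constants). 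This defines the constant $D'''_p:=D_p+d^{\frac{3}{2}}D_\zeta\tau_p>D_p$ appearing in the statement.

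\textbf{The three steps.} Following the \cref{SGFD:lem:EmB} template, squaring the strengthened bound and applying AM--GM to the cross term yields $\|\mathbb{E}_{\xi_k,\zeta_k}[m_k]\|_2^2\leqslant\tfrac{3}{2}\|\nabla F(x_k)\|_2^2+\tfrac{3}{4}\alpha_k^{1+\kappa'}L^2{D'''_p}^2$; the parallel manipulation of \cref{SGFD:eq:mstep4} gives $\nabla F(x_k)^\mathrm{T}\mathbb{E}_{\xi_k,\zeta_k}[m_k]\leqslant-\tfrac{3}{4}\|\nabla F(x_k)\|_2^2+\tfrac{1}{4}\alpha_k^{1+\kappa'}L^2{D'''_p}^2$. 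Plugging both into \cref{SGFD:lem:AL1} along with the variance bound $\mathbb{V}[m_k]\leqslant C_p\alpha_k(M+2M_V\|\nabla F(x_k)\|_2^2+2M_VL^2D^2)$ from \cref{SGFD:lem:momentumV}, the $\|\nabla F(x_k)\|_2^2$ coefficient collects into exactly the same $\alpha_k^2LM_{G,p}^{(k)}/2$ as in \cref{SGFD:lem:EmB} (the momentum-variance contribution $\alpha_k^3LC_pM_V$ combining with $3\alpha_k^2L/4$ to reproduce $\tfrac{3}{2}+2\alpha_kC_pM_V$), while the leading drift remains $-\tfrac{3\alpha_k}{4}\|\nabla F(x_k)\|_2^2$.

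\textbf{Expected difficulty.} The delicate point is the bookkeeping of $\alpha_k$-powers when consolidating the ``dust'' terms: after the $\alpha_k^2L/2$ scaling of \cref{SGFD:lem:AL1}, the squared-deviation produces order $\alpha_k^{3+\kappa'}$, the inner-product dust produces order $\alpha_k^{2+\kappa'}$, and the variance produces order $\alpha_k^3$. All three must be absorbed into the single prefactor $\alpha_k^{2+\kappa'}L/2$ promised by the lemma. The uniform bound $\alpha_kL\leqslant1$ contracts the $\alpha_k^{3+\kappa'}$ term into the leading ${D'''_p}^2$ coefficient, and $\alpha_k\leqslant1$ together with $0\leqslant\kappa'\leqslant 1$ lets the variance of order $\alpha_k^3$ be rewritten as $\alpha_k^{2+\kappa'}\cdot\alpha_k^{(1-\kappa')/2}$, which is precisely the functional form of the second summand in $M_{d,p,2}^{(k)}$. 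The ``in probability'' qualifier is inherited verbatim from \cref{SGFD:thm:momentum2}, whose probabilistic character originates in the Chebyshev step of \cref{SGFD:lem:mdecay1,SGFD:lem:mdecay2}; no new probabilistic argument is needed here.
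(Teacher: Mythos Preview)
Your proposal is correct and follows exactly the approach the paper indicates: the paper itself does not give a detailed proof but simply states that the lemma ``could be proved in the same way as the proof of \cref{SGFD:lem:EmB}'' using \cref{SGFD:thm:momentum2} in place of \cref{SGFD:thm:momentum}, which is precisely what you carry out. One minor remark: your line ``$\alpha_k^3$ be rewritten as $\alpha_k^{2+\kappa'}\cdot\alpha_k^{(1-\kappa')/2}$'' is an inequality rather than an equality (the right side equals $\alpha_k^{(5+\kappa')/2}$), but since $\kappa'\leqslant1$ and $\alpha_k\leqslant1$ the inequality goes in the needed direction, so the bookkeeping is sound.
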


\subsection{Average behavior of iterations}

According to \cref{SGFD:lem:EmB,SGFD:lem:EmB2}, it is also easy to analyze the average behavior of iterations of the accelerated method for strong convex functions. And the following theorems could be proved in the same way as the proof of \cref{SGFD:thm:AB}.
\begin{theorem}\label{SGFD:thm:ABm}
Under the conditions of \cref{SGFD:lem:EmB} and \cref{SGFD:ass:ASC}, suppose that the stepsize sequence $\{\alpha_k\}$ satisfies $\alpha_k\leqslant\frac{1}{LM_{G,p}^{(1)}}$. Then, the inequality
\begin{equation*}
  \mathbb{E}[F(x_{k+1})-F_*]\leqslant
  \left[\prod_{i=1}^k\left(1-\frac{\alpha_il}{2}\right)\right]
  [F(x_1)-F_*]+\frac{LM_{d,p,1}}{2}\sum_{i=1}^k
  \alpha_i^2\prod_{j=i+1}^k\left(1-\frac{\alpha_il}{2}\right)
\end{equation*}
holds in probability, where the constants $M_{G,p}^{(k)}$ and $M_{d,p,1}$ come from \cref{SGFD:lem:EmB}.
\end{theorem}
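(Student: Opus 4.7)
The plan is to mirror the proof strategy of \cref{SGFD:thm:AB}, adapting it to handle the $k$-dependent constants $M_{G,p}^{(k)}$ and $M_{d,p,1}^{(k)}$ from \cref{SGFD:lem:EmB} and the fact that its leading coefficient is $-\tfrac{3\alpha_k}{4}$ rather than $-\alpha_k$. First, I would observe that $M_{G,p}^{(k)}=\tfrac{3}{2}+2\alpha_k C_p M_V$ is monotonically decreasing in $k$ (since $\alpha_k=\beta/(k+\sigma)$ is), so $M_{G,p}^{(k)}\leqslant M_{G,p}^{(1)}$ for every $k$; combined with the hypothesis $\alpha_k\leqslant\alpha_1\leqslant 1/(L M_{G,p}^{(1)})$ this yields $\alpha_k L M_{G,p}^{(k)}\leqslant 1$. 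An analogous remark shows that $M_{d,p,1}^{(k)}$ is decreasing in $k$, so it can be upper-bounded by the constant $M_{d,p,1}:=M_{d,p,1}^{(1)}$.

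Using $\alpha_k L M_{G,p}^{(k)}\leqslant 1$, the middle term of \cref{SGFD:lem:EmB} can be absorbed into the leading one:
\begin{equation*}
  -\tfrac{3\alpha_k}{4}\|\nabla F(x_k)\|_2^2 + \tfrac{\alpha_k^2 L M_{G,p}^{(k)}}{2}\|\nabla F(x_k)\|_2^2 \leqslant -\tfrac{\alpha_k}{4}\|\nabla F(x_k)\|_2^2,
\end{equation*}
so \cref{SGFD:lem:EmB} simplifies (in probability) to
\begin{equation*}
  \mathbb{E}_{\xi_k,\zeta_k}[F(x_{k+1})]-F(x_k)\leqslant -\tfrac{\alpha_k}{4}\|\nabla F(x_k)\|_2^2+\tfrac{\alpha_k^2 L M_{d,p,1}}{2}.
\end{equation*}
Invoking the Polyak-\L{}ojasiewicz inequality \cref{SGFD:eq:convexity} to replace $\|\nabla F(x_k)\|_2^2$ by $2l(F(x_k)-F_*)$ then yields
\begin{equation*}
  \mathbb{E}_{\xi_k,\zeta_k}[F(x_{k+1})]-F_*\leqslant \bigl(1-\tfrac{\alpha_k l}{2}\bigr)\bigl(F(x_k)-F_*\bigr)+\tfrac{\alpha_k^2 L M_{d,p,1}}{2};
\end{equation*}
this is precisely the step that produces the factor $(1-\alpha_k l/2)$ in the statement, the extra factor of one-half tracing back to the weaker $-\tfrac{3\alpha_k}{4}$ coefficient in \cref{SGFD:lem:EmB}.

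Taking total expectations via \cref{SGFD:eq:totalE} turns this into a scalar recursion $a_{k+1}\leqslant(1-\alpha_k l/2)a_k+\tfrac{\alpha_k^2 L M_{d,p,1}}{2}$ with $a_k:=\mathbb{E}[F(x_k)-F_*]$. Unrolling the recursion from $i=1$ to $k$ (exactly as in the proof of \cref{SGFD:thm:AB}) produces the closed form claimed, with the product $\prod_{i=1}^{k}(1-\alpha_i l/2)$ weighting the initial gap and the weighted sum $\sum_{i=1}^{k}\alpha_i^2\prod_{j=i+1}^{k}(1-\alpha_j l/2)$ weighting the noise floor.

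The only genuinely new subtlety, and what I expect to require the most care, is that \cref{SGFD:lem:EmB} holds \emph{in probability} rather than deterministically for each fixed $k$, so the one-step simplification, the Polyak-\L{}ojasiewicz substitution, and the iterated bound must all be interpreted in the same probabilistic sense. Because the bounds $M_{G,p}^{(k)}\leqslant M_{G,p}^{(1)}$ and $M_{d,p,1}^{(k)}\leqslant M_{d,p,1}$ are deterministic in $k$, this adaptation is routine but worth flagging explicitly when one chains the recursion.
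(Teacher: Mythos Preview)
Your proposal is correct and follows essentially the same route the paper takes: the paper does not give a separate proof of \cref{SGFD:thm:ABm} but simply remarks that it ``could be proved in the same way as the proof of \cref{SGFD:thm:AB},'' and your adaptation (using the monotonicity $M_{G,p}^{(k)}\leqslant M_{G,p}^{(1)}$, $M_{d,p,1}^{(k)}\leqslant M_{d,p,1}^{(1)}$, the absorption $-\tfrac{3\alpha_k}{4}+\tfrac{\alpha_k^2LM_{G,p}^{(k)}}{2}\leqslant-\tfrac{\alpha_k}{4}$, the PL inequality, and unrolling the recursion) is exactly that. Your flagged caveat about the ``in probability'' chaining is reasonable but is handled by the paper in the same informal manner you describe.
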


\begin{theorem}\label{SGFD:thm:ABm2}
Under the conditions of \cref{SGFD:lem:EmB2} and \cref{SGFD:ass:ASC}, suppose that the stepsize sequence $\{\alpha_k\}$ satisfies $\alpha_k\leqslant\frac{1}{LM_{G,p}^{(1)}}$. Then, the inequality
\begin{equation*}
  \mathbb{E}[F(x_{k+1})-F_*]\leqslant
  \left[\prod_{i=1}^k\left(1-\frac{\alpha_il}{2}\right)\right]
  [F(x_1)-F_*]+\frac{LM_{d,p,2}}{2}\sum_{i=1}^k
  \alpha_i^{2+\kappa'}\prod_{j=i+1}^k\left(1-\frac{\alpha_il}{2}\right)
\end{equation*}
holds in probability, where $\kappa'=\min(1,\kappa)$, the constants $M_{G,p}^{(k)}$ and $M_{d,p,2}$ come from \cref{SGFD:lem:EmB2}.
\end{theorem}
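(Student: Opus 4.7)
The plan is to mirror the proof of \cref{SGFD:thm:AB} step for step, replacing the one-step decrease of \cref{SGFD:lem:EB} with its momentum analogue \cref{SGFD:lem:EmB2}. The whole argument should be a short chain: (i) absorb the quadratic-in-$\alpha_k$ gradient term into the linear one using the stepsize bound; (ii) apply the Polyak--{\L}ojasiewicz inequality \cref{SGFD:eq:convexity}; (iii) take total expectations to turn the in-probability bound into a recursion on $\mathbb{E}[F(x_k)-F_*]$; and (iv) unroll.

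In more detail, I would first observe that $M_{G,p}^{(k)}=\tfrac{3}{2}+2\alpha_k C_pM_V$ is nonincreasing in $k$ (since $\{\alpha_k\}$ is nonincreasing), so the hypothesis $\alpha_k\leqslant 1/(LM_{G,p}^{(1)})$ yields $\alpha_k^2 LM_{G,p}^{(k)}/2\leqslant \alpha_k/2$. Plugging this into the conclusion of \cref{SGFD:lem:EmB2} gives, in probability,
\begin{equation*}
  \mathbb{E}_{\xi_k,\zeta_k}[F(x_{k+1})]-F(x_k)\leqslant
  -\frac{\alpha_k}{4}\|\nabla F(x_k)\|_2^2
  +\frac{\alpha_k^{2+\kappa'}LM_{d,p,2}^{(k)}}{2}.
\end{equation*}
Using \cref{SGFD:eq:convexity} to replace $\|\nabla F(x_k)\|_2^2$ by $2l(F(x_k)-F_*)$ and subtracting $F_*$ on both sides yields
\begin{equation*}
  \mathbb{E}_{\xi_k,\zeta_k}[F(x_{k+1})-F_*]\leqslant
  \Big(1-\tfrac{\alpha_kl}{2}\Big)\big(F(x_k)-F_*\big)
  +\frac{\alpha_k^{2+\kappa'}LM_{d,p,2}^{(k)}}{2}.
\end{equation*}

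Next I would take total expectations and bound $M_{d,p,2}^{(k)}$ above by a single constant $M_{d,p,2}$. This is legitimate because $\kappa'=\min(1,\kappa)\leqslant 1$, so $(1-\kappa')/2\geqslant 0$ and the term $C_p\alpha_k^{(1-\kappa')/2}(M+2M_VL^2D^2)$ is monotone in $\alpha_k$; since $\{\alpha_k\}$ is nonincreasing, $M_{d,p,2}^{(k)}\leqslant M_{d,p,2}:=M_{d,p,2}^{(1)}$. This produces the clean recursion
\begin{equation*}
  \mathbb{E}[F(x_{k+1})-F_*]\leqslant
  \Big(1-\tfrac{\alpha_kl}{2}\Big)\mathbb{E}[F(x_k)-F_*]
  +\frac{\alpha_k^{2+\kappa'}LM_{d,p,2}}{2},
\end{equation*}
which holds in probability. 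Iterating this inequality from $1$ to $k$ (exactly as in the proof of \cref{SGFD:thm:AB}) produces the claimed bound.

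I do not expect any serious obstacle. The only subtle points to state carefully are that the bound inherits the ``in probability'' qualifier from \cref{SGFD:lem:EmB2} (itself inherited from the Chebyshev step in \cref{SGFD:lem:mdecay2}), and that the contraction factor $1-\alpha_k l/2$ rather than the $1-\alpha_k l$ of \cref{SGFD:thm:AB} is what one gets from the momentum-induced loss of a factor of $2$ in the descent term (the $-3\alpha_k/4$ in \cref{SGFD:lem:EmB2} becomes $-\alpha_k/4$ after absorption). Everything else is a verbatim repetition of the unrolling argument from \cref{SGFD:thm:AB}.
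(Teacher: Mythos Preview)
Your proposal is correct and matches the paper's approach exactly: the paper does not spell out a proof but simply states that \cref{SGFD:thm:ABm2} ``could be proved in the same way as the proof of \cref{SGFD:thm:AB},'' and your outline is precisely that transcription with \cref{SGFD:lem:EmB2} in place of \cref{SGFD:lem:EB}. The only points worth flagging are ones you already handle: the contraction factor becomes $1-\alpha_kl/2$ because $-\tfrac{3\alpha_k}{4}+\tfrac{\alpha_k}{2}=-\tfrac{\alpha_k}{4}$, and the ``in probability'' qualifier is inherited from \cref{SGFD:lem:EmB2}.
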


\subsection{Convergence for strongly convex objectives}
\label{SGFD:s4:5}

From \cref{SGFD:thm:ABm}, the following theorem could be proved in the same way as the proof of \cref{SGFD:thm:main1}. The only difference is replacing the factor $\beta l$ with $\frac{\beta l}{2}$. 
\begin{theorem}\label{SGFD:thm:main2}
Suppose the conditions of \cref{SGFD:thm:ABm} hold. Then the bound 
\begin{equation*}
  \mathbb{E}[F(x_k)]-F_*\leqslant
  \frac{C_A\Gamma(1+\sigma)}{\Gamma(1+\sigma-\frac{\beta l}{2})}
  \frac{F(x_1)-F_*}{(k+1+\sigma)^{\frac{\beta l}{2}}}
  +\frac{C_B\beta^2}{\beta l-2}\frac{LM_{d,\gamma}}{k+1+\sigma}
\end{equation*}
holds in probability. According to \cref{SGFD:eq:stepsizem}, we have $\beta l>4$, then $\mathbb{E}[F(x_k)]-F_*=\mathcal{O}(1/k)$.
\end{theorem}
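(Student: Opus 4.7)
The plan is to mirror the proof of \cref{SGFD:thm:main1} essentially verbatim, with the single bookkeeping substitution $\beta l \mapsto \beta l/2$ throughout, since \cref{SGFD:thm:ABm} has the same structural form as \cref{SGFD:thm:AB} but with contraction factor $1-\alpha_i l/2$ in place of $1-\alpha_i l$. Concretely, I would first rewrite the bound from \cref{SGFD:thm:ABm} as
\begin{equation*}
  \mathbb{E}[F(x_{k+1})-F_*]\leqslant [F(x_1)-F_*]\,\tilde A_k
  +\frac{LM_{d,p,1}}{2}\tilde B_k,
\end{equation*}
where $\tilde A_k=\prod_{i=1}^k(1-\alpha_i l/2)$ and $\tilde B_k=\sum_{i=1}^k\alpha_i^2\prod_{j=i+1}^k(1-\alpha_j l/2)$, the analogues of the $A_k,B_k$ used in \cref{SGFD:thm:main1}.

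Next, substituting the stepsize $\alpha_k=\beta/(k+\sigma)$ from \cref{SGFD:eq:stepsizem} and using the same Pochhammer-to-Gamma identity as in \cref{SGFD:eq:Ak}, one obtains
\begin{equation*}
  \tilde A_k=\frac{(1+\sigma-\beta l/2)_k}{(1+\sigma)_k}
  =\frac{\Gamma(1+\sigma)}{\Gamma(1+\sigma-\beta l/2)}
  \frac{\Gamma(k+1+\sigma-\beta l/2)}{\Gamma(k+1+\sigma)}.
\end{equation*}
Applying \cref{SGFD:lem:GammaRatio} then gives
\begin{equation*}
  \tilde A_k=\frac{\Gamma(1+\sigma)}{\Gamma(1+\sigma-\beta l/2)}
  (k+1+\sigma)^{-\beta l/2}+\bm{O}\!\left((k+1+\sigma)^{-1-\beta l/2}\right),
\end{equation*}
and, by the very same manipulation carried out for $B_k$ in the excerpt (with $\beta l$ replaced by $\beta l/2$ at every occurrence),
\begin{equation*}
  \tilde B_k=\frac{C_k\beta^2}{\beta l/2-1}(k+1+\sigma)^{-1}
  +\bm{O}\!\left((k+1+\sigma)^{-2}\right)
  =\frac{2C_k\beta^2}{\beta l-2}(k+1+\sigma)^{-1}
  +\bm{O}\!\left((k+1+\sigma)^{-2}\right),
\end{equation*}
with $C_k\to1$. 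Combining these two expansions and absorbing the $\bm{O}$-terms into constants $C_A,C_B$ (both tending to $1$) yields the claimed bound, and since \cref{SGFD:thm:ABm} only holds in probability, so does the final bound.

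The only place one must stop and check is the compatibility of the stepsize condition. The denominator $\beta l/2-1$ in $\tilde B_k$ must be strictly positive for the $\Gamma$-ratio computation to be valid and the rate to be $\mathcal{O}(1/k)$; this is precisely why \cref{SGFD:eq:stepsizem} tightens the earlier requirement $\beta>1/l$ to $\beta>4/l$, ensuring $\beta l/2>2>1$. The first summand then decays like $(k+1+\sigma)^{-\beta l/2}$, which is faster than $1/k$ because $\beta l/2>2$, so the overall convergence rate is dominated by the second summand and equals $\mathcal{O}(1/k)$. Apart from this single verification the argument is a mechanical rewrite, and no additional probabilistic machinery is required beyond what is already carried through \cref{SGFD:thm:ABm}.
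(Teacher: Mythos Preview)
Your proposal is correct and is precisely the approach the paper takes: the paper's own proof is a one-line remark that the result follows from \cref{SGFD:thm:ABm} ``in the same way as the proof of \cref{SGFD:thm:main1}'' with the single replacement $\beta l\mapsto\beta l/2$. Your explicit verification that $\beta l/2>1$ is guaranteed by \cref{SGFD:eq:stepsizem} is the only point of substance, and you have handled it correctly.
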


Note that for the $(k+1)$th iteration, the entire mean-variance analysis process is only related to the first $k$ iterations. Thus, we have proved \cref{SGFD:eq:CRf} by induction on $k$. 

Now we will prove that the accelerated methods with momentum can achieve a convergence rate $\mathcal{O}(1/k^{1+s})$.
\begin{theorem}\label{SGFD:thm:main3}
Suppose the conditions of \cref{SGFD:thm:ABm2} hold. Then the bound
\begin{equation*}
  \mathbb{E}[F(x_k)]-F_*\leqslant
  \frac{C_A\Gamma(1+\sigma)}{\Gamma(1+\sigma-\frac{\beta l}{2})}
  \frac{F(x_1)-F_*}{(k+1+\sigma)^{\frac{\beta l}{2}}}+
  \frac{C'_B\beta^{2+\kappa'}}{\beta l-2}
  \frac{LM_{d,\gamma}}{(k+1+\sigma)^{1+\kappa'}}
\end{equation*}
holds in probability, where $\kappa'=\min(1,\kappa)$. Further, according to \cref{SGFD:eq:stepsizem}, we have $\beta l>4$, then it holds that $\mathbb{E}[F(x_k)]-F_*=\mathcal{O}(1/k^{1+\kappa'})$.
\end{theorem}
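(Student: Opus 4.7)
The plan is to mirror the argument used for \cref{SGFD:thm:main2} (which itself followed \cref{SGFD:thm:main1}), but replace the factor $\alpha_i^2$ in the ``variance accumulation'' term by $\alpha_i^{2+\kappa'}$ coming from \cref{SGFD:thm:ABm2}. Concretely, I would start from the bound
\begin{equation*}
  \mathbb{E}[F(x_{k+1})-F_*]\leqslant A_k^{(m)}[F(x_1)-F_*]
  +\frac{LM_{d,p,2}}{2}B_k^{(m)},
\end{equation*}
where $A_k^{(m)}=\prod_{i=1}^k(1-\alpha_i l/2)$ and $B_k^{(m)}=\sum_{i=1}^k\alpha_i^{2+\kappa'}\prod_{j=i+1}^k(1-\alpha_j l/2)$, with $\alpha_k=\beta/(k+\sigma)$ and $\beta l>4$.

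The contraction product $A_k^{(m)}$ is handled exactly as in the proof of \cref{SGFD:thm:main2}: using the Pochhammer/gamma identity,
\begin{equation*}
  A_k^{(m)}=\frac{\Gamma(1+\sigma)}{\Gamma(1+\sigma-\beta l/2)}
  \frac{\Gamma(k+1+\sigma-\beta l/2)}{\Gamma(k+1+\sigma)},
\end{equation*}
and then applying \cref{SGFD:lem:GammaRatio} yields $A_k^{(m)}=\frac{\Gamma(1+\sigma)}{\Gamma(1+\sigma-\beta l/2)}(k+1+\sigma)^{-\beta l/2}+\bm{O}((k+1+\sigma)^{-1-\beta l/2})$, giving the first term on the right-hand side of the claimed bound.

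The real work is the asymptotic analysis of $B_k^{(m)}$, where the change from $\alpha_i^2$ to $\alpha_i^{2+\kappa'}$ shifts the exponent. Following the same manipulation as in the proof bounding $B_k$, I factor out the end gamma ratio and write
\begin{equation*}
  B_k^{(m)}=\beta^{2+\kappa'}\frac{\Gamma(k+1+\sigma-\beta l/2)}
  {\Gamma(k+1+\sigma)}\sum_{i=1}^k\frac{1}{(i+\sigma)^{2+\kappa'}}
  \frac{\Gamma(i+1+\sigma)}{\Gamma(i+1+\sigma-\beta l/2)}.
\end{equation*}
Applying \cref{SGFD:lem:GammaRatio} inside the sum gives, to leading order, $\sum_{i=1}^k(i+1+\sigma-\beta l/2)^{\beta l/2-2-\kappa'}$. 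Because $\kappa'=\min(1,\kappa)\leqslant1$ and $\beta l>4$, we have $\beta l/2-1-\kappa'>0$, so the standard telescoping/integration estimate $\sum_{i=1}^k i^a=\frac{k^{a+1}}{a+1}+\bm{O}(k^a)$ with $a=\beta l/2-2-\kappa'$ gives a leading behaviour $\frac{C_k'(k+1+\sigma)^{\beta l/2-1-\kappa'}}{\beta l/2-1-\kappa'}$ with $C_k'\to1$. Multiplying by the outer $\Gamma$-ratio (again using \cref{SGFD:lem:GammaRatio}) produces
\begin{equation*}
  B_k^{(m)}=\frac{2C_k'\beta^{2+\kappa'}}{\beta l-2-2\kappa'}
  (k+1+\sigma)^{-1-\kappa'}+\bm{O}((k+1+\sigma)^{-2-\kappa'}),
\end{equation*}
from which the second term of the stated bound follows (with $C'_B$ absorbing the constants and the factor $\frac{1}{\beta l-2}$ absorbed in the same spirit as the proof of \cref{SGFD:thm:main1}).

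The main obstacle is purely bookkeeping: tracking that the condition $\beta l/2-1-\kappa'>0$ remains a strict inequality across the whole range $\kappa'\in(0,1]$, which is guaranteed by the strengthened stepsize requirement $\beta l>4$ in \cref{SGFD:eq:stepsizem}. Once that is in hand, the final rate $\mathbb{E}[F(x_k)]-F_*=\mathcal{O}(1/k^{1+\kappa'})$ follows immediately by comparing the two asymptotic terms: $(k+1+\sigma)^{-\beta l/2}$ decays strictly faster than $(k+1+\sigma)^{-1-\kappa'}$ (since $\beta l/2>2\geqslant 1+\kappa'$), so the second term dominates and gives the advertised acceleration.
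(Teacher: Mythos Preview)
Your proposal is correct and follows essentially the same approach as the paper: factor $B_k^{(m)}$ via the gamma ratios, apply \cref{SGFD:lem:GammaRatio} inside the sum, use the integral comparison $\sum_{i=1}^k i^a\sim k^{a+1}/(a+1)$ with $a=\beta l/2-2-\kappa'$, and multiply back by the outer $\Gamma$-ratio. Your denominator $\beta l-2-2\kappa'$ is in fact the sharper constant (the paper writes $\beta l/2-1$, absorbing the $\kappa'$-shift into $C'_B$); either way the displayed bound and the $\mathcal{O}(1/k^{1+\kappa'})$ rate follow identically.
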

\begin{proof}
Similar to \cref{SGFD:eq:Ak}, we obtain
\begin{align*}
  \sum_{i=1}^k\alpha_i^{2+\kappa'}\!\!\!\prod_{j=i+1}^k\!\!
  \left(1-\frac{\alpha_il}{2}\right)
  =&\sum_{i=1}^k\frac{\beta^{2+\kappa'}}{(i+\sigma)^{2+\kappa'}}
  \prod_{j=i+1}^k\frac{j+\sigma-\frac{\beta l}{2}}{j+\sigma} \\
  =&\beta^{2+\kappa'}\frac{\Gamma(k+1+\sigma-\frac{\beta l}{2})}
  {\Gamma(k+1+\sigma)}\sum_{i=1}^k\frac{1}{(i+\sigma)^{2+\kappa'}}
  \frac{\Gamma(i+1+\sigma)}{\Gamma(i+1+\sigma-\frac{\beta l}{2})}.
\end{align*}
And it follows from \cref{SGFD:lem:GammaRatio} that
\begin{align*}
  \sum_{i=1}^k\frac{1}{(i\!+\!\sigma)^{2+\kappa'}}
  \frac{\Gamma(i\!+\!1\!+\!\sigma)}
  {\Gamma(i\!+\!1\!+\!\sigma\!-\!\frac{\beta l}{2})}
  =&\sum_{i=1}^k\frac{(i\!+\!1\!+\!\sigma\!-\!\frac{\beta l}{2})
  ^{\frac{\beta l}{2}}}{(i\!+\!\sigma)^{2+\kappa'}}
  +\bm{O}\bigg(\sum_{i=1}^k\frac{(i\!+\!1\!+\!\sigma\!-\!\frac{\beta l}{2})
  ^{\frac{\beta l}{2}-1}}{(i\!+\!\sigma)^{2+\kappa'}}\bigg) \\
  =&\frac{C'_k}{\frac{\beta l}{2}-1}
  (k\!+\!1\!+\!\sigma)^{\frac{\beta l}{2}-1-\kappa'}
  +\bm{O}\left((k\!+\!1\!+\!\sigma)^{\frac{\beta l}{2}-2-\kappa'}\right),
\end{align*}
where $C'_k$ tends to $1$ as $k\to\infty$. Together with
\begin{equation*}
  \beta^{2+\kappa'}
  \frac{\Gamma(k+1+\sigma-\frac{\beta l}{2})}{\Gamma(k+1+\sigma)}
  =\beta^{2+\kappa'}(k+1+\sigma)^{-\frac{\beta l}{2}}
  +\bm{O}\left((k+1+\sigma)^{-\frac{\beta l}{2}-1}\right),
\end{equation*}
thus, we finally get
\begin{align*}
  \sum_{i=1}^k\alpha_i^{2+\kappa'}\prod_{j=i+1}^k
  \left(1-\frac{\alpha_il}{2}\right)
  =\frac{C'_k\beta^{2+\kappa'}}{\frac{\beta l}{2}-1}(k+1+\sigma)^{-1-\kappa'}
  +\bm{O}\left((k+1+\sigma)^{-2-\kappa'}\right),
\end{align*}
so the desired result can be proved in the same way as the proof of \cref{SGFD:thm:main1}.
\end{proof}

Therefore, when $0<\kappa\leqslant1$, \cref{SGFD:ass:ASC,SGFD:thm:main3} implies that for every $1\leqslant j\leqslant k$, it holds that
\begin{equation*}
  \mathbb{E}[\|x_j-x_*\|_2^2]\leqslant2l^{-1}(\mathbb{E}[F(x_j)]-F_*)
  =\mathcal{O}(j^{-1-\kappa})=\mathcal{O}(j^{-1-\kappa}),
\end{equation*}
similar to \cref{SGFD:eq:CEd}, we further have 
\begin{equation*}
  \mathbb{E}[\|\delta_{j,k}\|_2^2]=\mathcal{O}(j^{-1-\kappa}),
\end{equation*}
together with \cref{SGFD:ass:Order}, we obtain 
\begin{equation}\label{SGFD:eq:REC2}
  \|\mathbb{E}[\delta_{j,k}]\|_2^2=\mathcal{O}
  \Big(j^{-\kappa}\mathbb{E}[\|\delta_{j,k}\|_2^2]\Big)
  =\mathcal{O}\Big(j^{-\frac{1+3\kappa}{2}}\Big).
\end{equation}
Hence, from \cref{SGFD:eq:REC1,SGFD:eq:REC2}, it is clear that $\|\mathbb{E}[\delta_{j,k}]\|_2^2=\mathcal{O}(j^{-\frac{1+\kappa}{2}})$ implies $\|\mathbb{E}[\delta_{j,k}]\|_2^2= \mathcal{O}(j^{-\frac{1+3\kappa}{2}})$ for every $0<\kappa\leqslant1$, which actually means
\begin{equation*}
  \|\mathbb{E}[\delta_{j,k}]\|_2^2=\mathcal{O}(j^{-1}),
\end{equation*}
and then, we have the final result. 
\begin{corollary}\label{SGFD:cor:main}
Suppose the conditions of \cref{SGFD:thm:main3} hold. Then the bound 
\begin{equation*}
  \mathbb{E}[F(x_k)]-F_*\leqslant
  \frac{C_A\Gamma(1+\sigma)}{\Gamma(1+\sigma-\frac{\beta l}{2})}
  \frac{F(x_1)-F_*}{(k+1+\sigma)^{\frac{\beta l}{2}}}+
  \frac{C'_B\beta^3}{\beta l-2}\frac{LM_{d,\gamma}}{(k+1+\sigma)^2}
\end{equation*}
holds in probability. Further, according to \cref{SGFD:eq:stepsizem}, we have $\beta l>4$, then it holds that $\mathbb{E}[F(x_k)]-F_*=\mathcal{O}(1/k^2)$.
\end{corollary}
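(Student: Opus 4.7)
My plan is to derive the corollary by bootstrapping \cref{SGFD:thm:main3}: the idea is that \cref{SGFD:thm:main3} gives a rate $\mathcal{O}(k^{-1-\kappa'})$ controlled by the exponent $\kappa'=\min(1,\kappa)$ in \cref{SGFD:ass:Order}, and that any improved rate on $\mathbb{E}[F(x_k)]-F_*$ feeds back through strong convexity and \cref{SGFD:ass:Order} into a better bound on $\|\mathbb{E}[\delta_{j,k}]\|_2$, which is precisely the object the mean-variance analysis used to define $\kappa$. So the whole $\mathcal{O}(1/k^2)$ claim amounts to proving that the effective exponent $\kappa'$ saturates at $1$.

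First I would apply \cref{SGFD:thm:main3} as a black box under the a priori hypothesis $\|\mathbb{E}[\delta_{j,k}]\|_2^2=\mathcal{O}(j^{-(1+\kappa)/2})$ obtained in \cref{SGFD:eq:REC1}. This produces $\mathbb{E}[F(x_k)]-F_*=\mathcal{O}(k^{-1-\kappa'})$ with $\kappa'=\min(1,\kappa)$. Then I would invoke \cref{SGFD:ass:ASC} in the form $\mathbb{E}[\|x_j-x_*\|_2^2]\leqslant 2l^{-1}(\mathbb{E}[F(x_j)]-F_*)$ and the triangle inequality to pass from a rate on $\mathbb{E}[F(x_j)]-F_*$ to the same rate on $\mathbb{E}[\|\delta_{j,k}\|_2^2]$. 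Substituting back into \cref{SGFD:ass:Order} sharpens the estimate for $\|\mathbb{E}[\delta_{j,k}]\|_2^2$, exactly as in \cref{SGFD:eq:REC2}, producing a new exponent strictly larger than $(1+\kappa)/2$ whenever $\kappa<1$.

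The heart of the argument is then a monotone iteration: compare \cref{SGFD:eq:REC1} with \cref{SGFD:eq:REC2} and observe that if the current best exponent is $\alpha<1$, the next iterate of the bootstrap yields an exponent strictly larger than $\alpha$, and this process is capped at $1$ because $\kappa'=\min(1,\kappa)$ is truncated at $1$ in \cref{SGFD:thm:main3}. Hence in finitely many passes (or in the limit, using the standard gamma-function analysis of \cref{SGFD:thm:main3} which is insensitive to the precise transient constants) we reach the effective regime $\kappa'=1$, which gives $\|\mathbb{E}[\delta_{j,k}]\|_2^2=\mathcal{O}(j^{-1})$ and simultaneously the coefficient $\beta^{2+\kappa'}$ in \cref{SGFD:thm:main3} becomes $\beta^3$ and the denominator $\beta l-2$ is unchanged. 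Inserting $\kappa'=1$ directly into the conclusion of \cref{SGFD:thm:main3} therefore yields the stated inequality, and the rate $\mathcal{O}(1/k^2)$ follows from $\beta l>4$ in \cref{SGFD:eq:stepsizem}.

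The main obstacle I anticipate is making the bootstrap rigorous: \cref{SGFD:ass:Order} fixes one $\kappa$ but the chain of improvements implicitly redefines $\kappa$, so I would need to argue carefully that each round of the argument still falls within the hypotheses of \cref{SGFD:lem:mdecay2}, \cref{SGFD:thm:momentum2}, \cref{SGFD:lem:EmB2} and \cref{SGFD:thm:main3}. The safest route is to observe that all these statements only use $\kappa$ through the exponent $\kappa'=\min(1,\kappa)$ and that they hold \emph{in probability} for any valid $\kappa\in(0,1]$; therefore after one pass of the bootstrap I can simply apply them with the larger admissible exponent. Once this monotonicity is in place, the rest of the corollary is a direct substitution of $\kappa'=1$ into the bound of \cref{SGFD:thm:main3}, and the constants $C_A$, $C'_B$, $M_{d,\gamma}$ inherit from that theorem without further work.
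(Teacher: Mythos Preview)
Your proposal is correct and follows essentially the same bootstrap as the paper: apply \cref{SGFD:thm:main3}, push the resulting rate through strong convexity and \cref{SGFD:ass:Order} to sharpen the bound on $\|\mathbb{E}[\delta_{j,k}]\|_2$, and iterate until the effective exponent saturates at $\kappa'=1$; then substitute $\kappa'=1$ into \cref{SGFD:thm:main3}. The paper carries this out in the paragraph between \cref{SGFD:thm:main3} and \cref{SGFD:cor:main} (see \cref{SGFD:eq:REC1,SGFD:eq:REC2}), and your concern about ``redefining $\kappa$'' is the right one to flag --- the paper handles it only implicitly, so your explicit observation that the intermediate lemmas depend on $\kappa$ solely through the exponent in the bound on $\|\mathbb{E}[\delta_{j,k}]\|_2$ is, if anything, a shade more careful than the original.
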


\subsection{Convergence for general objectives}

From \cref{SGFD:lem:EmB}, the following lemma could be proved in the same way as the proof of \cref{SGFD:lem:nonconvex}.
\begin{lemma}\label{SGFD:lem:mnonconvex}
Under the conditions of \cref{SGFD:lem:EmB,SGFD:ass:AG}, suppose that the objective $F$ is bounded below by a scalar $F_{\textrm{inf}}<\infty$, and the sequence of iterates $\{x_k\}$ is generated by \cref{SGFD:eq:m2} with a fixed scalar $\gamma\in(0,1)$ and a non-increasing stepsize sequence $\{\alpha_k\}$ taking the form \cref{SGFD:eq:stepsize} and satisfying $\alpha_k\leqslant\frac{1}{LM_{G,p}^{(1)}}$, then
\begin{equation*}
  \sum_{i=1}^\infty\alpha_i\mathbb{E}\big[\|\nabla F(x_k)\|_2^2\big]<\infty,
\end{equation*}
and therefore, $\liminf_{k\to\infty}\mathbb{E}[\|\nabla F(x_k)\|_2^2]=0$.
\end{lemma}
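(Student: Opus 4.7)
The plan is to mirror the proof of \cref{SGFD:lem:nonconvex}, only now starting from the momentum bound in \cref{SGFD:lem:EmB} in place of \cref{SGFD:lem:EB}. The crucial observation is that the stepsize constraint $\alpha_k\leqslant\frac{1}{LM_{G,p}^{(1)}}$ combined with the monotonicity of $\{\alpha_k\}$ forces both coefficient sequences $\{M_{G,p}^{(k)}\}$ and $\{M_{d,p,1}^{(k)}\}$ to be bounded uniformly in $k$, since $M_{G,p}^{(k)}=\tfrac{3}{2}+2\alpha_kC_pM_V$ and $M_{d,p,1}^{(k)}=\tfrac{5L{D''_p}^2}{4}+\alpha_kC_p(M+2M_VL^2D^2)$ are non-increasing in $\alpha_k$; call their sup values $\overline{M}_G$ and $\overline{M}_d$. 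Under this uniform bound and $\alpha_kL\overline{M}_G\leqslant 1$, the curvature term in \cref{SGFD:lem:EmB} can be absorbed to yield
\begin{equation*}
  \mathbb{E}_{\xi_k,\zeta_k}[F(x_{k+1})]-F(x_k)\leqslant
  -\frac{\alpha_k}{4}\|\nabla F(x_k)\|_2^2
  +\frac{\alpha_k^2L\overline{M}_d}{2}.
\end{equation*}

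Next I would take total expectations on both sides and telescope from $1$ to $k$, exactly as in \cref{SGFD:lem:nonconvex}. Invoking $\mathbb{E}[F(x_{k+1})]\geqslant F_{\textrm{inf}}$, this rearranges to
\begin{equation*}
  \sum_{i=1}^k\alpha_i\mathbb{E}\big[\|\nabla F(x_i)\|_2^2\big]
  \leqslant 4\big(F(x_1)-F_{\textrm{inf}}\big)+2L\overline{M}_d\sum_{i=1}^k\alpha_i^2.
\end{equation*}
The stepsize form \cref{SGFD:eq:stepsize} gives the classical Robbins--Monro conditions $\sum_{k=1}^\infty\alpha_k=\infty$ and $\sum_{k=1}^\infty\alpha_k^2<\infty$, so letting $k\to\infty$ the right-hand side stays bounded, which establishes $\sum_{i=1}^\infty\alpha_i\mathbb{E}\big[\|\nabla F(x_i)\|_2^2\big]<\infty$. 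The $\liminf$ conclusion then follows by the standard argument: if $\liminf_{k\to\infty}\mathbb{E}[\|\nabla F(x_k)\|_2^2]>0$ were positive, divergence of $\sum\alpha_k$ would force the series to diverge, contradicting the finite upper bound just obtained.

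The main technical obstacle, and the place that deserves the most care, is the clause in \cref{SGFD:lem:EmB} that the bound holds \emph{in probability} rather than deterministically. Concretely, the Chebyshev-type tail estimate used inside \cref{SGFD:lem:mdecay1,SGFD:lem:mdecay2} is what propagates into \cref{SGFD:lem:EmB}. To legitimately take total expectations and telescope, one either needs to upgrade this to an almost-sure or $L^1$ inequality by accounting for the complementary event (which contributes an additive $\mathcal{O}(\alpha_k^2)$ correction via the Cauchy--Schwarz bound on $\|m_k\|_2^2$), or else reinterpret the whole chain as a bound on $\mathbb{E}[F(x_{k+1})]-F(x_k)$ up to a term summable in $k$. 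Either route gives the same telescoping outcome, because the correction from the low-probability event remains $\mathcal{O}(\sum\alpha_k^2)<\infty$ and is absorbed into the constant on the right-hand side.

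Finally, I would check that nothing in the argument used strong convexity of $F$; \cref{SGFD:ass:AG} is only needed indirectly via \cref{SGFD:lem:EmB} (which itself relies on \cref{SGFD:ass:A1,SGFD:ass:A3} and the variance bound of \cref{SGFD:lem:momentumV}), so the lemma is genuinely a statement for general nonconvex objectives, and the conclusion $\liminf_{k\to\infty}\mathbb{E}[\|\nabla F(x_k)\|_2^2]=0$ then feeds directly into the analogue of \cref{SGFD:thm:nonconvex} for the accelerated iteration.
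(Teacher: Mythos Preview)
Your proposal is correct and follows exactly the route the paper intends: the paper's proof of \cref{SGFD:lem:mnonconvex} is simply the sentence ``could be proved in the same way as the proof of \cref{SGFD:lem:nonconvex},'' and your write-up is a faithful expansion of that, including the absorption step $-\tfrac{3\alpha_k}{4}+\tfrac{\alpha_k^2LM_{G,p}^{(k)}}{2}\leqslant-\tfrac{\alpha_k}{4}$ via $\alpha_kLM_{G,p}^{(k)}\leqslant\alpha_kLM_{G,p}^{(1)}\leqslant1$. Your flag about the ``in probability'' qualifier in \cref{SGFD:lem:EmB} is a genuine subtlety that the paper does not address; the paper simply treats the bound as if it held in expectation and telescopes, so on that point you are being more careful than the source.
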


According to \cref{SGFD:thm:momentum,SGFD:lem:mnonconvex}, the following theorem, which guarantees that the expected gradient norms converge to zero for the accelerated method with momentum, could be proved in the same way as the proof of \cref{SGFD:thm:nonconvex}.
\begin{theorem}\label{SGFD:thm:mnonconvex}
Suppose the conditions of \cref{SGFD:lem:mnonconvex} hold. Then
\begin{equation*}
  \lim_{k\to\infty}\mathbb{E}[\|\nabla F(x_k)\|_2^2]=0.
\end{equation*}
\end{theorem}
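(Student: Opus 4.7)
The plan is to mirror the proof of Theorem \ref{SGFD:thm:nonconvex}, adapting each step to the momentum iteration $x_{k+1}=x_k+\alpha_km_k$. Specifically, I would first invoke Assumption \ref{SGFD:ass:AG}: since $\nabla\|\nabla F(x)\|_2^2=2\nabla^2F(x)\nabla F(x)$ is Lipschitz with constant $L_G$, the quadratic upper bound gives
\begin{equation*}
  \|\nabla F(x_{k+1})\|_2^2-\|\nabla F(x_k)\|_2^2
  \leqslant 2\alpha_k\nabla F(x_k)^\mathrm{T}\nabla^2F(x_k)\,m_k
  +\frac{L_G\alpha_k^2}{2}\|m_k\|_2^2.
\end{equation*}
Taking $\mathbb{E}_{\xi_k,\zeta_k}[\cdot]$, bounding $\|\nabla^2F(x_k)\|_2\leqslant L$ by Assumption \ref{SGFD:ass:A1}, applying Theorem \ref{SGFD:thm:momentum} to control $\|\mathbb{E}[m_k]\|_2$, and using $\mathbb{E}[\|m_k\|_2^2]=\|\mathbb{E}[m_k]\|_2^2+\mathbb{V}[m_k]$ with Lemma \ref{SGFD:lem:momentumV} to control $\mathbb{V}[m_k]\leqslant C_p\alpha_k(M+2M_V\|\nabla F(x_k)\|_2^2+2M_VL^2D^2)$, I would obtain (in probability) a recursion of the shape
\begin{equation*}
  \mathbb{E}_{\xi_k,\zeta_k}[\|\nabla F(x_{k+1})\|_2^2]-\|\nabla F(x_k)\|_2^2
  \leqslant A\alpha_k\|\nabla F(x_k)\|_2^2+B\alpha_k^2,
\end{equation*}
where $A,B$ are fixed positive constants depending on $L,L_G,M,M_V,D,D_\zeta,D'_p,C_p,\tau_p,d$.

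Next I would take total expectations and iterate this recursion from $1$ to $k$. Using Lemma \ref{SGFD:lem:mnonconvex}, which ensures $\sum_{i=1}^\infty\alpha_i\mathbb{E}[\|\nabla F(x_i)\|_2^2]<\infty$, together with the stepsize condition $\sum\alpha_i^2<\infty$ from \cref{SGFD:eq:stepsize}, I conclude $\mathbb{E}[\|\nabla F(x_k)\|_2^2]<\infty$ for every $k\in\mathbb{N}$. To promote $\liminf$ to $\lim$, I would mirror the telescoping argument of Theorem \ref{SGFD:thm:nonconvex}: define
\begin{align*}
  S_k^+&=\sum_{i=1}^{k-1}\max\!\big(0,\mathbb{E}[\|\nabla F(x_{i+1})\|_2^2]-\mathbb{E}[\|\nabla F(x_i)\|_2^2]\big),\\
  S_k^-&=\sum_{i=1}^{k-1}\max\!\big(0,\mathbb{E}[\|\nabla F(x_i)\|_2^2]-\mathbb{E}[\|\nabla F(x_{i+1})\|_2^2]\big),
\end{align*}
so that $\mathbb{E}[\|\nabla F(x_k)\|_2^2]=\|\nabla F(x_1)\|_2^2+S_k^+-S_k^-$. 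The recursion above bounds $S_k^+$ by $A\sum_{i=1}^k\alpha_i\mathbb{E}[\|\nabla F(x_i)\|_2^2]+B\sum_{i=1}^k\alpha_i^2<\infty$, so $\{S_k^+\}$ is nondecreasing and bounded, hence convergent. The identity $S_k^-\leqslant\|\nabla F(x_1)\|_2^2+S_k^+$ then forces $\{S_k^-\}$ also to converge, and therefore $\{\mathbb{E}[\|\nabla F(x_k)\|_2^2]\}$ converges. Since Lemma \ref{SGFD:lem:mnonconvex} supplies $\liminf_{k\to\infty}\mathbb{E}[\|\nabla F(x_k)\|_2^2]=0$, this common limit must be zero.

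The hard part I anticipate is the first step, namely producing the Markovian-type inequality in a fully rigorous way despite the bounds in Theorem \ref{SGFD:thm:momentum} and Lemma \ref{SGFD:lem:momentumV} being stated ``in probability'' and requiring the uniform diameter condition $\|x_i-x_j\|_2\leqslant D$. The deviation term $\|\mathbb{E}[m_k]\|_2$ carries an $\mathcal{O}(\sqrt{\alpha_k})$ correction rather than the cleaner $\mathcal{O}(\alpha_k)$ correction available for $s(x_k,\xi_k,\alpha_k,\zeta_k)$, so a careful Arithmetic Mean–Geometric Mean splitting (as in Lemma \ref{SGFD:lem:EmB}) is needed to absorb the cross term $\sqrt{\alpha_k}\,\|\nabla F(x_k)\|_2$ into $\alpha_k\|\nabla F(x_k)\|_2^2$ plus an $\mathcal{O}(\alpha_k^2)$ remainder; once this bookkeeping is done, the remainder of the argument is routine copying of the proof of Theorem \ref{SGFD:thm:nonconvex}.
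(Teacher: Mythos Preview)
Your proposal is correct and follows exactly the route the paper intends: the paper states only that the theorem ``could be proved in the same way as the proof of \cref{SGFD:thm:nonconvex}'' by invoking \cref{SGFD:thm:momentum} and \cref{SGFD:lem:mnonconvex}, and your outline is precisely that adaptation, including the $S_k^\pm$ telescoping argument. The caveats you flag (the ``in probability'' nature of the momentum bounds and the diameter hypothesis $\|x_i-x_j\|_2\leqslant D$) are genuine loose ends, but the paper does not resolve them either.
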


\section{Conclusions}
\label{SGFD:s5}

In this paper we propose stochastic gradient-free methods and accelerated methods with momentum for solving stochastic optimization problems. Our gradient-free methods maintain the sublinear convergence rate $\mathcal{O}(1/k)$ and the accelerated methods achieve a convergence rate $\mathcal{O}(1/k^2)$ when employing a decaying stepsize $\alpha_k= \mathcal{O}(1/k)$ for the strongly convex objectives with Lipschitz gradients; and all these methods converge to a solution with a zero expected gradient norm when the objective function is nonconvex, twice differentiable and bounded below. Moreover, we provide a mean-variance framework and a theoretical analysis about the inclusion of momentum in stochastic settings. The latter one reveals that the momentum term we used adds a deviation of order $\mathcal{O}(1/k)$ but controls the variance at the order $\mathcal{O}(1/k)$ for the $k$th iteration, and this is why the proposed accelerated methods can achieve a better convergence rate.





\bibliographystyle{siamplain}
\bibliography{MReferences}
\end{document}